\definecolor{mygreen}{rgb}{0.01,0.6,0.2}
\definecolor{myblue}{rgb}{0.01, 0.18, 1.0}
\newtheorem{theorem}{Theorem}
\newtheorem{proposition}[theorem]{Proposition}
\newtheorem{lemma}[theorem]{Lemma}
\theoremstyle{definition}
\newtheorem{definition}[theorem]{Definition}
\newtheorem{remark}[theorem]{Remark}
\numberwithin{equation}{section}
\numberwithin{theorem}{section}
\numberwithin{equation}{section}
\numberwithin{theorem}{section}
\title[Fractional system of Schr\"{o}dinger equations with Hardy potential]{On critically coupled $(s_1,s_2)$-fractional system of Schr\"{o}dinger equations with Hardy potential}
\author[R. Kumar, T. Mukherjee \& A. Sarkar]{Rohit Kumar$^1$, Tuhina Mukherjee$^{1,*}$ and Abhishek Sarkar$^{1}$}
\subjclass{35R11, 47G30.}
\keywords{Fractional Laplacian, coupled system, variational methods, fractional-Hardy potential, concentration-compactness}
\thanks{$^*$Corresponding author.}
\begin{document}

\maketitle 
\centerline{$^{1}$Department of Mathematics, Indian Institute of Technology Jodhpur,}
\centerline{Rajasthan 342030, India}
 \begin{abstract}
  In this article, our main concern is to study the existence of bound and ground state solutions for the following fractional system of Schr\"{o}dinger equations with Hardy potentials:
  \begin{equation*}
    \left\{
     \begin{aligned}
         (-\Delta)^{s_{1}} u - \lambda_{1} \frac{u~~}{|x|^{2s_{1}}} - u^{2_{s_{1}}^{*}-1} = \nu \alpha h(x) u^{\alpha-1}v^{\beta} & \quad \mbox{in} ~ \mathbb{R}^{N}, \\
            (-\Delta)^{s_{2}} v - \lambda_{2} \frac{v~~}{|x|^{2s_{2}}} - v^{2_{s_{2}}^{*}-1} = \nu \beta h(x) u^{\alpha}v^{\beta-1} & \quad \mbox{in} ~ \mathbb{R}^{N},\\
             u,v >0 \quad \mbox{in} ~ \mathbb{R}^{N} \setminus \{0\},
    \end{aligned}
    \right.
\end{equation*}
 where $s_{1},s_{2} \in (0,1)~\text{and}~\lambda_{i}\in (0, \Lambda_{N,s_{i}})$ with $\Lambda_{N,s_{i}} = 2 \pi^{N/2} \frac{\Gamma^{2}(\frac{N+2s_i}{4}) \Gamma(\frac{N+2s_i}{2})}{\Gamma^{2}(\frac{N-2s_i}{4}) ~|\Gamma(-s_{i})|}, (i=1,2)$.
By imposing certain assumptions on the parameters {$\nu, \alpha,\beta$} and on the function $h$, we obtain ground-state solutions using the concentration-compactness principle and the mountain-pass theorem. 
 \end{abstract} 
\section{Introduction}\label{S1}
The study of elliptic equations and systems involving fractional Laplacian is attracting many researchers over the last decade. The keen aspect of studying such equations is due to physical models in many different applications, e.g., geostrophic flows, crystal dislocation, water waves, etc. we refer to \cite{ Bisci2016variational, Bucur2016nonlocal,Dipierro2017book} and the references therein for more details. In this article, we are concerned with the system of fractional Schr\"{o}dinger equations with singular Hardy potential and coupled with terms up to critical power on the entire $\mathbb{R}^N$ given below
\begin{equation} \label{main problem}
    \left\{
    \begin{aligned}
         (-\Delta)^{s_{1}} u - \lambda_{1} \frac{u~~}{|x|^{2s_{1}}} - u^{2_{s_{1}}^{*}-1} = \nu \alpha h(x) u^{\alpha-1}v^{\beta} & \quad \mbox{in} ~ \mathbb{R}^{N}, \\
            (-\Delta)^{s_{2}} v - \lambda_{2} \frac{v~~}{|x|^{2s_{2}}} - v^{2_{s_{2}}^{*}-1} = \nu \beta h(x) u^{\alpha}v^{\beta-1} & \quad \mbox{in} ~ \mathbb{R}^{N},\\
             u,v >0 \quad \mbox{in} ~ \mathbb{R}^{N} \setminus \{0\},
    \end{aligned}
    \right.
\end{equation}
where $s_{1},s_{2} \in (0,1)~\text{and}~\lambda_{i}\in (0, \Lambda_{N,s_{i}})$ with $\Lambda_{N,s_{i}} = 2 \pi^{N/2} \frac{\Gamma^{2}(\frac{N+2s_i}{4}) \Gamma(\frac{N+2s_i}{2})}{\Gamma^{2}(\frac{N-2s_i}{4}) ~|\Gamma(-s_{i})|}, (i=1,2)$. The constant $\Lambda_{N,s_{i}}, (i=1,2)$ is an optimal constant for the fractional Hardy inequality \cite[Theorem 1.1]{Frank2008}.
Further $2_{s_i}^{*} = \frac{2N}{N-2s_i},$ $(2s_i < N ~\text{and}~i=1,2)$ is the critical Sobolev exponent; the parameters $\nu,\alpha ~\text{and}~\beta$ are positive reals such that 
\begin{align} \label{ alpha beta condition}
    \alpha,\beta>1~~\text{and}~~\alpha+\beta \leq \min \{ 2_{s_{1}}^{*},2_{s_{2}}^{*}\},
\end{align}
 and $h$ is a  function defined on $\mathbb{R}^{N}$ satisfying  
 \begin{align} \label{condition on h}
    0<h \in L^{1}(\mathbb{R}^{N})\cap L^{\infty}(\mathbb{R}^{N}).
 \end{align}
For $s_1 = s_2=1$, the system \eqref{main problem} reduces to the coupled system of local nonlinear Schr\"{o}dinger equations with singular Hardy potential
\begin{equation} \label{classical problem}
    \begin{cases}
        \begin{aligned}
            -\Delta u - \lambda_{1} \frac{u~}{|x|^{2}} - u^{2^{*}-1} = \nu \alpha h(x) |u|^{\alpha-2}|v|^{\beta}u &  \mbox{ in } \mathbb{R}^{N}, \\
            -\Delta v - \lambda_{2} \frac{v~}{|x|^{2}} - v^{2^{*}-1} = \nu \beta h(x) |u|^{\alpha}|v|^{\beta-2}v &  \mbox{ in }  \mathbb{R}^{N}.
        \end{aligned}
    \end{cases}
\end{equation}
{ For} $\nu=0$, the system \eqref{classical problem} becomes a single nonlinear elliptic equation and {the author} in  \cite[Terracini]{Terracini1996} discussed the existence of positive solutions as well as their qualitative properties. In 2009, Abdellaoui et al. (see \cite{Abdellaoui2009}) dealt with the local system \eqref{classical problem}, and they obtained the existence of positive ground state solutions depending on the parameter $\nu>0$ (large or small) and the non-negative function $h(x) \in L^{1}(\mathbb{R}^N) \cap L^{\infty}(\mathbb{R}^N)$ for $2<\alpha + \beta < 2^*$ and $h(x) \in L^{\infty}(\mathbb{R}^N)$ for $\alpha + \beta = 2^*$. Later in  2014, Kang \cite{Kang2014} proved the existence of a positive solution to \eqref{classical problem} considering $ h(x),\lambda_{1}(x),\lambda_{2}(x) \in C(\mathbb{R}^N)$ with additional assumptions. In 2015, Chen and Zou \cite{Chen2015classical} dealt with the critical case i.e., $\alpha + \beta = 2^*$ with $h(x)=1$, and proved the existence of positive solutions which are radially symmetric. Further, Zhong and Zou \cite{Zhong2015critical} proved the existence of ground state solutions by allowing $h(x)$ to change its sign with coupling parameter $\nu =1$. Recently Colorado et al. \cite{Colorado2022} studied the problem \eqref{classical problem} with $\alpha+\beta \leq 2^*$ and $0\leq h(x) \in L^{\infty}(\mathbb{R}^{N})$, and obtained positive ground and bound state solutions depending on the behaviour of parameter $\nu>0$. The similar types of results were also derived by Colorado et al. \cite{Colorado2021bound} when $\alpha=2$ and $\beta=1$,  $0 \leq h(x) \in L^{\infty}(\mathbb{R}^{N})$.

 While dealing with the fractional system \eqref{main problem}, if we suppose $s_1 =s_2=s$ and $\nu=0$, then this system reduces to a fractional doubly critical equation 
 \begin{equation} \label{single problem}
    (-\Delta)^{s_{}} u - \lambda_{} \frac{u~~}{|x|^{2s}} = u^{2_{s_{}}^{*}-1} \quad  \text{in}~ \mathbb{R}^N.
\end{equation}
In 2016 Dipierro et al. in their paper \cite[Theorem 1.5]{Dipierro2016} proved the existence of a positive solution using variational approach for any $0\leq \lambda < \Lambda_{N,s}$. Moreover, they used the moving plane method to obtain the qualitative behavior (such as radial symmetry, asymptotic behaviors etc.) of solutions of \eqref{single problem}.
 In 2020, He and Peng \cite{He2020} considered the following fractional system in $\mathbb{R}^{N}$
\begin{equation} \label{Qihan}
    \left\{
        \begin{array}{ll}
            (-\Delta)^{s} u + P(x)u - \mu_{1}|u|^{2p-2}u = \beta  |v|^{p}|u|^{p-2}u & \quad \text{in} ~ \mathbb{R}^{N}, \\
            (-\Delta)^{s} v + Q(x)u - \mu_{2}|v|^{2p-2}v = \beta  |u|^{p}|v|^{p-2}v & \quad \text{in} ~ \mathbb{R}^{N},\\
             u,v \in H^{s}(\mathbb{R}^{N}), 
        \end{array}
    \right.
\end{equation}
where $N\geq 2,~ 0 < s < 1,~ 1 < p < \frac{N}{N-2s} ,~ \mu_{1}>0,~ \mu_{2}>0$ and $\beta \in \mathbb{R}$ is a coupling constant, and $P(x),Q(x)$ are continuous bounded radial functions. The authors used variational methods to obtain the existence of infinitely many non-radial positive solutions. Observe that the above system contains only the subcritical nonlinear terms and coupled terms up to subcritical power. Recently, Shen \cite{Shen2022brezis} considered the following fractional elliptic systems with Hardy-type singular potentials and coupled by critical homogeneous nonlinearities on the bounded domain $\Omega \subset \mathbb{R}^{N}$
\begin{equation} \label{Bounded domain problem}
    \begin{cases}
            (-\Delta)^{s} u - \lambda_{1} {  \frac{u}{|x|^{2s}} } - |u|^{2_{s}^{*}-2}u = \frac{n\alpha}{2_{s}^{*}} |u|^{\alpha-2}u|v|^{\beta}+ \frac{1}{2}Q_{u}(u,v) & \text{ in} ~ \Omega, \\
            (-\Delta)^{s}v - \lambda_{2} { \frac{v}{|x|^{2s}} } - |v|^{2_{s}^{*}-2}v = \frac{n\beta}{2_{s}^{*}} |u|^{\alpha}|v|^{\beta-2}v + \frac{1}{2}Q_{v}(u,v) &  \text{ in} ~ \Omega,\\
             u=v=0 ~~\text{in}~~\mathbb{R}^{N} \backslash \Omega,
     \end{cases}
\end{equation}
where $\lambda_{1}, \lambda_{2} \in (0, \Lambda_{N,s})$ and $2_{s}^{*} = \frac{2N}{N-2s}$ is the fractional critical Sobolev exponent. The existence of positive solutions to the systems through variational methods was ascertained for the critical case, i.e., $\alpha+\beta = 2_{s}^{*}$ on the bounded domain $\Omega$.

Motivated by the aforementioned articles and their results, we are interested in finding out the existence of positive solutions to the system (\ref{main problem}). As far as we know, there is no literature in this direction concerning the critical case on the whole domain $\mathbb{R}^N$. { The lack of compactness due to the nonlinearities in the source terms and the singular Hardy potential terms make it delicate to employ the variational methods to the problem \eqref{main problem}.} To deal with such kind of non-compactness, we will use the concentration compactness principle for fractional problems in unbounded domains considered in \cite[Bonder et al.]{Bonder2018}, \cite[Chen et al.]{Chen2018} and \cite[Pucci and Temperini]{Pucci2021}, and the Mountain pass theorem. These concentration compactness principles are fractional analogous to the celebrated concentration compactness principles discussed by P. L. Lions \cite{lions1985Partone,lions1985Parttwo}.
\begin{remark}
In case $s_1=s_2$, the order between the parameters $\lambda_1$ and $\lambda_2$ determines the order between the \textit{semi-trivial} energy levels. Indeed, if $\lambda_2>\lambda_1$ and $\nu$ is small enough, $J_\nu(0,z_{\mu,s}^{\lambda_2})= \frac{s}{N} S^{\frac{N}{2s}}(\lambda_2) < \frac{s}{N} S^{\frac{N}{2s}}(\lambda_1) = J_\nu(z_{\mu,s}^{\lambda_2},0)$ i.e., the pair $(0,z_{\mu,s}^{\lambda_2})$ is a ground state solution of (1.1), see Theorem \eqref{third theorem}. In this case, the order of $\lambda_1$ and $\lambda_2$ plays a vital for determining the existence of positive and ground-state \textit{semi-trivial} solutions. But in the case of $s_1 \neq s_2$, the order between $\lambda_1$ and $\lambda_2$ fails to determine the order between the \textit{semi-trivial} energy levels. Therefore, we find positive ground state solutions independent of the order between $\lambda_1$ and $\lambda_2$ while dealing with the system involving two different fractional Laplacians.
\end{remark} 
Our paper is organized in the following manner. First, we give some preliminary results and functional analysis settings in Section \ref{S2}. Further, Section \ref{S3} deals with the results in which the functional $J_\nu$ satisfies the Palais-Smale condition for both the cases, i.e., subcritical and critical cases. Also, the local behavior of the semi-trivial solutions is given in this section depending on the parameters $\alpha, \beta$, and $\nu$. In Section \ref{S4}, we prove the main results of this article concerned with positive bound and ground state solutions.
\section{Preliminaries and functional setting}\label{S2}

In this section, we give an appropriate variational setting for the system \eqref{main problem}. First, we define the energy functional $J_\nu$ associated with the system \eqref{main problem} given as
\begin{align} \label{energy functional}
\begin{split}
    J_{\nu}(u,v) &= \frac{1}{2} \iint_{\mathbb{R}^{2N}} \frac{|u(x)-u(y)|^{2}}{|x-y|^{N+2s_{1}}} \mathrm{d}x \mathrm{d}y + \frac{1}{2} \iint_{\mathbb{R}^{2N}} \frac{|v(x)-v(y)|^{2}}{|x-y|^{N+2s_{2}}} \mathrm{d}x \mathrm{d}y -  \frac{\lambda_{1}}{2} \int_{\mathbb{R}^{N}} \frac{u^{2}~~}{|x|^{2s_{1}}}\mathrm{d}x\\
    &~~~~~  - \frac{\lambda_{2}}{2} \int_{\mathbb{R}^{N}} \frac{v^{2}~~}{|x|^{2s_{2}}}\mathrm{d}x - \frac{1}{2_{s_{1}}^{*}}\int_{\mathbb{R}^{N}} |u|^{2_{s_{1}}^{*}}\mathrm{d}x  -\frac{1}{2_{s_{2}}^{*}}\int_{\mathbb{R}^{N}} |v|^{2_{s_{2}}^{*}}\mathrm{d}x - \nu \int_{\mathbb{R}^{N}}h(x)|u|^{\alpha}|v|^{\beta}\, \mathrm{d}x,
\end{split}
\end{align} 
defined on the product space $\mathbb{D} = \mathcal{D}^{s_{1},2}(\mathbb{R}^{N}) \times \mathcal{D}^{s_{2},2}(\mathbb{R}^{N})$. The space $\mathcal{D}^{s_i,2}(\mathbb{R}^{N}),~( i=1,2)$ is the closure of $C_{0}^{\infty}(\mathbb{R}^{N})$ with respect to the Gagliardo seminorm
\[ \|u\|_{s_i} := \bigg( \iint_{\mathbb{R}^{2N}} \frac{|u(x)-u(y)|^{2}}{|x-y|^{N+2s_{i}}}\,\mathrm{d}x\mathrm{d}y  \bigg)^{\frac{1}{2}},~~\text{for}~i=1,2.\]
We refer to the articles \cite{Brasco2021characterisation,Brasco2019note} by Brasco et al. for more details about the space $\mathcal{D}^{s_i,2}(\mathbb{R}^{N}),~( i=1,2)$. Further, we endow the following norm with the product space $\mathbb{D}$ given by
$$ \|(u,v)\|^{2}_{\mathbb{D}} = \|u\|_{\lambda_{1}, s_{1}}^{2} + \|v\|_{\lambda_{2}, s_{2}}^{2},  $$
where
$$ \|u\|_{\lambda_i, s_i}^{2} = \iint_{\mathbb{R}^{2N}} \frac{|u(x)-u(y)|^{2}}{|x-y|^{N+2s_{i}}}\,\mathrm{d}x\mathrm{d}y - \lambda_i \int_{\mathbb{R}^{N}}\frac{u^{2}}{|x|^{2s_i}}\,\mathrm{d}x,~\text{for}~i =1,2. $$
The above norm is well defined due to the fractional Hardy inequality \cite[Theorem 1.1]{Frank2008} given by
\begin{equation} \label{fractional Hardy inequality}
    \Lambda_{N,s_i} \int_{\mathbb{R}^{N}}\frac{{u^{2}}}{|x|^{2s_i}}\mathrm{d}x \leq \iint_{\mathbb{R}^{2N}} \frac{|u(x)-u(y)|^{2}}{|x-y|^{N+2s_{i}}} \mathrm{d}x\mathrm{d}y, ~\text{for}~i =1,2.
\end{equation}
where $\Lambda_{N,s_{i}} = 2 \pi^{N/2} \frac{\Gamma^{2}(\frac{N+2s_i}{4}) \Gamma(\frac{N+2s_i}{2})}{\Gamma^{2}(\frac{N-2s_i}{4}) ~|\Gamma(-s_{i})|}, (i=1,2)$ is the sharp constant for the inequality \eqref{fractional Hardy inequality}. We can note that the norms $\|\cdot\|_{\lambda_i,s_i}$ and $\|\cdot\|_{s_i}$ for any $\lambda_i \in (0, \Lambda_{N,s_i})~\text{with}~ i=1,2$ are equivalent due to the Hardy’s inequality \eqref{fractional Hardy inequality}. 

Let us recall that the solutions of \eqref{single problem} arise as minimizers $z^{\lambda_i}_{\mu,s_i}~(i=1,2)$ of the Rayleigh quotient given by (see \cite{Dipierro2016})
\begin{equation}\label{S lambda}
    S(\lambda_i) := \inf\limits_{u \in \mathcal{D}^{s_i,2}(\mathbb{R}^{N}), u \not\equiv 0} \frac{\|u\|_{\lambda_{i},s_i}^{2}}{\|u\|_{2_{s_i}^{*}}^{2}~~} = \frac{\|z^{\lambda_i}_{\mu,s_i}\|_{\lambda_{i},s_i}^{2}}{\|z^{\lambda_i}_{\mu,s_i}\|_{2_{s_i}^{*}}^{2}~~}, (i=1,2).
\end{equation}
Moreover, we have
\begin{equation} \label{extremal value at norms}
  \|z^{\lambda_i}_{\mu,s_i}\|_{\lambda_{i},s_i}^{2} = \|z^{\lambda_i}_{\mu,s_i}\|_{2_{s_i}^{*}}^{2_{s_i}^{*}} = S^{\frac{N}{2s_i}}(\lambda_i),~\text{for}~i=1,2. 
\end{equation}
If $\lambda_i =0$ for $i=1,2$, then { $S(\lambda_i)=S_i$} which is known to be achieved by the extremals of the type {$C(N,s_i)(1+|x|^2)^{-\frac{N-2s_i}{2}}$} (see \cite{Lieb2002sharp}), where {$C(N,s_i)$} is a positive constant depending on $N$ and {$s_i$} only. { We write $S_1=S_2=S$ when $s_1=s_2.$}
 { From \eqref{S lambda} we have the following Sobolev embeddings
\begin{equation} \label{embeddings}
\begin{split}\begin{cases}
    S(\lambda_1) \|u\|_{2_{s_i}^{*}}^{2} \leq \|u\|_{\lambda_{1},s_1}^{2},\\
    S(\lambda_2) \|v\|_{2_{s_2}^{*}}^{2} \leq \|v\|_{\lambda_{2},s_2}^{2}.\end{cases}
\end{split}
\end{equation} 
By applying H\"{o}lder's inequality, it follows that
\begin{align} \label{f2}
\displaystyle |J_{\nu}(u,v)|\leq
\begin{cases}
  \frac{1}{2} \|(u,v)\|_{\mathbb{D}}^2 + \frac{1}{2_{s_{1}}^{*}} \|u\|_{2_{s_{1}}^{*}}^{2_{s_{1}}^{*}}  +\frac{1}{2_{s_{2}}^{*}}\|v\|_{2_{s_{2}}^{*}}^{2_{s_{2}}^{*}} + \nu \|h\|_{1}^{1-\frac{\alpha}{2_{s_1}^{*}} - \frac{\beta}{2_{s_2}^{*}}} \|h\|_{\infty}^{\frac{\alpha}{2_{s_1}^{*}} + \frac{\beta}{2_{s_2}^{*}}} \|u\|_{2_{s_1}^{*}}^{\alpha} \|v\|_{2_{s_2}^{*}}^{\beta},\quad \text{if} \frac{\alpha}{2_{s_1}^{*}} + \frac{\beta}{2_{s_2}^{*}} <1,\\
         \frac{1}{2} \|(u,v)\|_{\mathbb{D}}^2 + \frac{1}{2_{s_{1}}^{*}} \|u\|_{2_{s_{1}}^{*}}^{2_{s_{1}}^{*}}  +\frac{1}{2_{s_{2}}^{*}}\|v\|_{2_{s_{2}}^{*}}^{2_{s_{2}}^{*}} + \nu \|h\|_{\infty}  \|u\|_{2_{s_1}^{*}}^{\alpha} \|v\|_{2_{s_2}^{*}}^{\beta}, \quad \text{if} \frac{\alpha}{2_{s_1}^{*}} + \frac{\beta}{2_{s_2}^{*}} =1.
\end{cases}
\end{align} 
In both cases the right-hand side is finite for every $(u,v) \in \mathbb{D}$ due to the Sobolev embeddings given by \eqref{embeddings} and thanks to \eqref{condition on h}. Hence, the functional $J_\nu$ is well-defined on the product space $\mathbb{D}$.
}
Let us re-write the functional $J_{\nu}(u,v)$ as 
\begin{equation}
    J_{\nu}(u,v) = J_{\lambda_1}(u_{}) + J_{\lambda_2}(v_{}) - \nu \int_{\mathbb{R}^{N}}h(x)|u|^{\alpha}|v|^{\beta}\mathrm{d}x,
\end{equation}
where
\begin{align} \label{value of functional componentwise}
    J_{\lambda_i}(u_{}) = \frac{1}{2} \iint_{\mathbb{R}^{2N}} \frac{|u(x)-u(y)|^{2}}{|x-y|^{N+2s_{i}}} \,\mathrm{d}x\mathrm{d}y  -  \frac{\lambda_{i}}{2} \int_{\mathbb{R}^{N}} \frac{u^{2}}{|x|^{2s_{i}}}\,\mathrm{d}x  - \frac{1}{2_{s_{i}}^{*}}\int_{\mathbb{R}^{N}} |u|^{2_{s_{i}}^{*}}\,\mathrm{d}x,~ \text{for}~i=1,2.
\end{align}
{  It is easy to verify that the functional $J_\nu$ is Fr\'{e}chet differentiable on $\mathbb{D}$. The functional is given by
\begin{align*}
    J_{\nu}(u,v) &= \frac{1}{2} A(u,v) - B(u,v)-\nu I(u,v),
\end{align*} 
where $ A(u,v) = \|(u,v)\|_{\mathbb{D}}^2,~ B(u,v)= \frac{1}{2_{s_{1}}^{*}} \|u\|_{2_{s_{1}}^{*}}^{2_{s_{1}}^{*}}  +\frac{1}{2_{s_{2}}^{*}}\|v\|_{2_{s_{2}}^{*}}^{2_{s_{2}}^{*}} ,~ I(u,v)= \int_{\mathbb{R}^{N}}h(x)|u|^{\alpha}|v|^{\beta}\, dx$. If $A,B,I \in C^{1}$ on the product space $\mathbb{D}$, then the functional $J_\nu$ is also in $C^1$ on $\mathbb{D}$. It is clear that $A \in C^1$ as it is the square of a norm on $\mathbb{D}$. By following a similar approach as given in \cite[Lemma 1]{Baldelli2021}, we can prove that the functionals $B$ and $I$ are in $C^1$ on the product space $\mathbb{D}$. \\ }
 For  $(u_0,v_0) \in \mathbb{D}$, the Fr\'{e}chet derivative of $J_\nu$ at $(u,v) \in \mathbb{D}$ is given as follow 
\begin{align*}
     \langle J'_{\nu}(u,v) | (u_0,v_0)\rangle &= \iint_{\mathbb{R}^{2N}} \frac{(u(x)-u(y))(u_0(x)-u_0(y))}{|x-y|^{N+2s_{1}}} \,\mathrm{d}x\mathrm{d}y  \\ &\quad+ \iint_{\mathbb{R}^{2N}} \frac{(v(x)-v(y))(v_0(x)-v_0(y))}{|x-y|^{N+2s_{2}}} \,\mathrm{d}x\mathrm{d}y  - \lambda_1 \int_{\mathbb{R}^{N}} \frac{u\cdot u_0}{|x|^{2s_{1}}}\,\mathrm{d}x - \lambda_2 \int_{\mathbb{R}^{N}} \frac{v \cdot v_0}{|x|^{2s_2}}\,\mathrm{d}x  \\&\quad- \int_{\mathbb{R}^{N}} |u|^{{2^{*}_{s_{1}}}-2} u \cdot u_0 \,\mathrm{d}x
        - \int_{\mathbb{R}^{N}} |v|^{{2^{*}_{s_{2}}}-2} v \cdot v_0 \,\mathrm{d}x\\
       &~~~~~ - \nu \alpha \int_{\mathbb{R}^{N}}h(x)|u|^{\alpha -2}u \cdot u_0|v|^{\beta}\,\mathrm{d}x
        - \nu \beta \int_{\mathbb{R}^{N}}h(x)|u|^{\alpha}|v|^{\beta-2}v \cdot v_0 \,\mathrm{d}x,
\end{align*}
where $J'_{\nu}(u,v)$ is the Fr\'{e}chet derivative of $J_\nu$ at $(u,v)$, and the duality bracket between the product space $\mathbb{D}$ and its dual $\mathbb{D}^*$ is represented as $\langle \cdot,\cdot \rangle$.
From \eqref{energy functional} and for any $\tau>0$, we get
\begin{align} 
    J_{\nu}(\tau u,\tau v) &= \frac{\tau^2}{2} \iint_{\mathbb{R}^{2N}} \frac{|u(x)-u(y)|^{2}}{|x-y|^{N+2s_{1}}} \mathrm{d}x\mathrm{d}y + \frac{\tau^2}{2} \iint_{\mathbb{R}^{2N}} \frac{|v(x)-v(y)|^{2}}{|x-y|^{N+2s_{2}}} \mathrm{d}x\mathrm{d}y -  \frac{\lambda_{1} \tau^2}{2} \int_{\mathbb{R}^{N}} \frac{u^{2}~~}{|x|^{2s_{1}}}\mathrm{d}x \notag\\
    &\quad - \frac{\lambda_{2}\tau^2}{2} \int_{\mathbb{R}^{N}} \frac{v^{2}~~}{|x|^{2s_{2}}}\mathrm{d}x - \frac{\tau^{2_{s_{1}}^{*}}}{2_{s_{1}}^{*}}\int_{\mathbb{R}^{N}} |u|^{2_{s_{1}}^{*}}\mathrm{d}x  -\frac{\tau^{2_{s_{2}}^{*}}}{2_{s_{2}}^{*}}\int_{\mathbb{R}^{N}} |v|^{2_{s_{2}}^{*}}\mathrm{d}x - \nu \tau^{\alpha+\beta}\int_{\mathbb{R}^{N}}h(x)|u|^{\alpha}|v|^{\beta}\mathrm{d}x.
\end{align} 
Clearly, $J_{\nu}(\tau u_{},\tau v_{}) \rightarrow -\infty ~\text{as}~\tau \rightarrow +\infty$ which implies that the functional $J_\nu$ is unbounded from below on $\mathbb{D}$. Here the concept of the Nehari manifold plays its role in minimizing the functional for finding the critical point in $\mathbb{D}$ by using a variational approach. We introduce the Nehari manifold $\mathcal{N}_{\nu}$ associated with the functional $J_\nu$ as
\[\mathcal{N}_{\nu} = \{  (u, v) \in \mathbb{D} \backslash \{(0, 0)\} : \Phi_{\nu}(u,v) = 0    \},\]
where 
\begin{equation} \label{phi function}
    \Phi_{\nu}(u,v) = \langle J'_{\nu}(u,v) | (u,v) \rangle.
\end{equation}
We can see that all the critical points $(u,v) \in \mathbb{D}\backslash \{(0,0)\}$ of the energy functional $J_\nu$ lie in the set $\mathcal{N}_{\nu}$. On Nehari manifolds, we recall some well-known facts for the reader's convenience.

Let $(u,v)$ be an element of the Nehari manifold $\mathcal{N}_{\nu}$. Then the following holds:
\begin{align} \label{equivalent norm}
\begin{split}
    \|(u,v)\|_{\mathbb{D}}^{2} 
    = \|u\|_{{2_{s_{1}}^{*}}}^{2_{s_{1}}^{*}} + \|v\|_{{2_{s_{2}}^{*}}}^{2_{s_{2}}^{*}} + \nu (\alpha + \beta) \int_{\mathbb{R}^{N}} h(x)|u|^{\alpha}|v|^{\beta}\mathrm{d}x.
\end{split}
\end{align}
If we restrict the functional $J_\nu$ on the Nehari manifold $\mathcal{N}_{\nu}$, the functional takes the following form
\begin{align} \label{energy functional on Nehari manifold}
    J_{\nu}|_{\mathcal{N}_{\nu}}(u,v) = \frac{s_{1}}{N} \|u\|_{{2_{s_{1}}^{*}}}^{2_{s_{1}}^{*}} + \frac{s_{2}}{N} \|v\|_{{2_{s_{2}}^{*}}}^{2_{s_{2}}^{*}} + \nu \bigg(  \frac{\alpha+\beta -2}{2}  \bigg) \int_{\mathbb{R}^{N}} h(x)|u|^{\alpha}|v|^{\beta}\mathrm{d}x.
\end{align}
Now suppose that $(\tau u,\tau v) \in \mathcal{N}_{\nu}$ for all $(u, v) \in \mathbb{D} \backslash \{(0, 0)\}$. Then using \eqref{equivalent norm} we get the following 
\begin{align} \label{Algebraic equation}
     \|(u,v)\|_{\mathbb{D}}^{2} =  \tau^{2_{s_{1}}^{*} -2} \|u\|_{{2_{s_{1}}^{*}}}^{2_{s_{1}}^{*}} +  \tau^{2_{s_{2}}^{*} -2} \|v\|_{{2_{s_{2}}^{*}}}^{2_{s_{2}}^{*}} + \nu (\alpha+\beta ) \tau^{\alpha+ \beta -2} \int_{\mathbb{R}^{N}} h(x)|u|^{\alpha}|v|^{\beta}\mathrm{d}x.
\end{align}
The above equation is an algebraic equation in $\tau$ and a cautious analysis of equation \eqref{Algebraic equation} shows that this algebraic equation has a unique positive solution. Thus, we can infer that there exists a unique positive $\tau= \tau_{(u,v)}$ such that $(\tau u,\tau v) \in \mathcal{N}_{\nu}$ for all $(u, v) \in \mathbb{D} \backslash \{(0, 0)\}$. By combining (\ref{equivalent norm}) and (\ref{ alpha beta condition}) we obtain that, for any $(u,v) \in \mathcal{N}_{\nu}$ 
\begin{align*} 
       J''_{\nu}(u,v)[u,v]^{2} &= \langle \Phi'_{\nu}(u,v) | (u,v) \rangle  \\
        &= 2 \iint_{\mathbb{R}^{2N}} \frac{|u(x)-u(y)|^{2}}{|x-y|^{N+2s_{1}}} \mathrm{d}x\mathrm{d}y + 2 \iint_{\mathbb{R}^{2N}} \frac{|v(x)-v(y)|^{2}}{|x-y|^{N+2s_{2}}} \mathrm{d}x\mathrm{d}y  
    -  2\lambda_{1} \int_{\mathbb{R}^{N}} \frac{u^{2}}{|x|^{2s_{1}}}\mathrm{d}x\\
    &~~~~ - 2\lambda_{2}\int_{\mathbb{R}^{N}} \frac{v^{2}}{|x|^{2s_{2}}}dx -2_{s_{1}}^{*} \int_{\mathbb{R}^{N}} |u|^{2_{s_{1}}^{*}}\mathrm{d}x 
    -2_{s_{2}}^{*}\int_{\mathbb{R}^{N}} |v|^{2_{s_{2}}^{*}}\mathrm{d}x -\\ &\quad- \nu (\alpha+\beta)^{2} \int_{\mathbb{R}^{N}}h(x)|u|^{\alpha}|v|^{\beta}\mathrm{d}x\\ 
    &= 2 \|(u,v)\|_{\mathbb{D}}^{2}-2_{s_{1}}^{*} \|u\|_{2_{s_{1}}^{*}}^{2_{s_{1}}^{*}} 
    -2_{s_{2}}^{*}\|v\|_{2_{s_{2}}^{*}}^{2_{s_{2}}^{*}} - \nu (\alpha+\beta)^{2} \int_{\mathbb{R}^{N}}h(x)|u|^{\alpha}|v|^{\beta}\mathrm{d}x.
    \end{align*} Further calculations give us
    \begin{align} \label{second order derivative}
    J''_{\nu}(u,v)[u,v]^{2}&=(2-\alpha-\beta) \|(u,v)\|_{\mathbb{D}}^{2} + (\alpha + \beta) ( \|u\|_{{2_{s_{1}}^{*}}}^{2_{s_{1}}^{*}}+ \|v\|_{{2_{s_{2}}^{*}}}^{2_{s_{2}}^{*}} )
        - 2_{s_{1}}^{*} \|u\|_{{2_{s_{1}}^{*}}}^{2_{s_{1}}^{*}} - 2_{s_{2}}^{*}\|v\|_{{2_{s_{2}}^{*}}}^{2_{s_{2}}^{*}}\notag\\
       & =(2-\alpha-\beta) \|(u,v)\|_{\mathbb{D}}^{2} + (\alpha + \beta -2_{s_{1}}^{*} )  \|u\|_{{2_{s_{1}}^{*}}}^{2_{s_{1}}^{*}}+ (\alpha + \beta -2_{s_{2}}^{*})\|v\|_{{2_{s_{2}}^{*}}}^{2_{s_{2}}^{*}} ~< 0 .
 \end{align}
Further, by using \eqref{equivalent norm} we can prove the existence of a constant $r_{\nu} > 0$ such that
\begin{align} \label{ norm equal  r}
    \|(u,v)\|_{\mathbb{D}} > r_{\nu} ~~\mbox{for~all}~(u,v) \in \mathcal{N}_{\nu}.
\end{align}
Now by the Lagrange multiplier method, if $(u, v) \in \mathbb{D}$ is a critical point of $J_{\nu}$ on the Nehari manifold $ \mathcal{N}_{\nu}$,  then there exists a $\rho \in \mathbb{R}$ called Lagrange multiplier such that
\[ (J_{\nu}|_{\mathcal{N}_{\nu} } )'(u,v) =  J_{\nu}'(u,v) - \rho \Phi'_{\nu}(u,v) = 0.\]
Thus from the above, we calculate that $\rho \langle \Phi'_{\nu}(u,v)|(u,v) \rangle = \langle J'_{\nu}(u,v)|(u,v) \rangle =0 $. It is clear that $\rho=0$, otherwise the inequality (\ref{second order derivative}) fails to hold and as a result $J_{\nu}'(u,v) = 0$. Hence, there is a one-to-one correspondence between the critical points of $J_{\nu}$ and the critical points of $J_{\nu}|_{\mathcal{N}_{\nu}}$. The functional $J_{\nu}$ restricted on the Nehari manifold ${\mathcal{N}_{\nu}}$ is also written as
\begin{equation} \label{two one four}
        ( J_{\nu}|_{\mathcal{N}_{\nu}})(u,v) = \bigg( \frac{1}{2} - \frac{1}{\alpha+\beta}  \bigg) \|(u,v)\|_{\mathbb{D}}^{2} +  \frac{1}{\alpha+\beta} \big( \|u\|_{{2_{s_{1}}^{*}}}^{2_{s_{1}}^{*}}+ \|v\|_{{2_{s_{2}}^{*}}}^{2_{s_{2}}^{*}} \big)  -  \frac{1}{2_{s_{1}}^{*}} \|u\|_{{2_{s_{1}}^{*}}}^{2_{s_{1}}^{*}} - \frac{1}{2_{s_{2}}^{*}}\|v\|_{{2_{s_{2}}^{*}}}^{2_{s_{2}}^{*}}. 
\end{equation} 
Thus, combining the hypotheses (\ref{ alpha beta condition}) and (\ref{ norm equal  r}) with \eqref{two one four}, we deduce
\[J_{\nu}(u,v) > \bigg( \frac{1}{2} - \frac{1}{\alpha+\beta}  \bigg) r_{\nu}^{2} ~~\mbox{for~all}~(u,v) \in \mathcal{N}_{\nu}.\]
We come to the conclusion that the functional $J_\nu$ restricted on $\mathcal{N}_{\nu}$ is bounded from below. Hence, we continue our study to get the solution of \eqref{main problem} by minimizing the energy functional $J_\nu$ on the Nehari manifold $\mathcal{N}_\nu$.
\begin{definition}
If $(u_1,v_1) \in \mathbb{D} \backslash \{(0, 0)\}$ is a critical point of $J_\nu$ over $\mathbb{D}$, then we say that the pair $(u_1,v_1)$ is a bound state solution of \eqref{main problem}. This bound state solution $(u_{1},v_{1})$ is said to be a ground state solution if its energy is minimal among all the bound state solutions i.e.
 \begin{equation} \label{ground state level}
     c_{\nu} = J_{\nu}(u_{1},v_{1}) = \min \{ J_{\nu}(u,v): (u, v) \in \mathbb{D} \backslash \{(0, 0)\} ~\text{and}~ J'_{\nu}(u,v)=0 \}.
 \end{equation}
\end{definition}
\section{The Palais-Smale Condition}\label{S3}
\begin{lemma} \label{equivalent of critical points lemma}
Let us assume that (\ref{ alpha beta condition}) and (\ref{condition on h}) are satisfied and also that $\{(u_{n},v_{n})\} \subset \mathcal{N}_{\nu}$ is a Palais-Smale sequence for $J_{\nu}$ restricted on the Nehari manifold ${\mathcal{N}_{\nu}}$ at level $c \in \mathbb{R}$, then $\{(u_{n},v_{n})\}$ is a bounded (PS) sequence for $J_{\nu}$ in $\mathbb{D}$, i.e.,
\begin{equation} \label{palais smale condition}
    J'_{\nu}(u_{n},v_{n}) \rightarrow 0 ~\mbox{as}~n \rightarrow \infty~\mbox{in the dual space}~\mathbb{D}^{*}.
\end{equation}
\end{lemma}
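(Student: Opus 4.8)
The plan is to split the statement into two parts: the boundedness of the sequence in $\mathbb{D}$, and the convergence $J'_\nu(u_n,v_n)\to 0$ in $\mathbb{D}^*$. For the boundedness I would start from the expression \eqref{two one four} for $J_\nu$ restricted to $\mathcal{N}_\nu$, regrouping it as
\begin{equation*}
J_\nu(u_n,v_n)=\bigg(\frac{1}{2}-\frac{1}{\alpha+\beta}\bigg)\|(u_n,v_n)\|_{\mathbb{D}}^{2}+\bigg(\frac{1}{\alpha+\beta}-\frac{1}{2_{s_1}^*}\bigg)\|u_n\|_{2_{s_1}^*}^{2_{s_1}^*}+\bigg(\frac{1}{\alpha+\beta}-\frac{1}{2_{s_2}^*}\bigg)\|v_n\|_{2_{s_2}^*}^{2_{s_2}^*}.
\end{equation*}
By \eqref{ alpha beta condition} we have $\alpha+\beta>2$, so the first coefficient is strictly positive, while $\alpha+\beta\le\min\{2_{s_1}^*,2_{s_2}^*\}$ makes the remaining two coefficients nonnegative. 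Hence $J_\nu(u_n,v_n)\ge(\tfrac12-\tfrac{1}{\alpha+\beta})\|(u_n,v_n)\|_{\mathbb{D}}^{2}$, and since $J_\nu(u_n,v_n)\to c$ the sequence $\{(u_n,v_n)\}$ is bounded in $\mathbb{D}$.

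For the Palais-Smale condition in the full space I would invoke the Lagrange multiplier rule for the constrained functional: as $\{(u_n,v_n)\}$ is a $(PS)$ sequence for $J_\nu|_{\mathcal{N}_\nu}$, there is a sequence $\{\rho_n\}\subset\mathbb{R}$ such that
\begin{equation*}
J'_\nu(u_n,v_n)-\rho_n\,\Phi'_\nu(u_n,v_n)\to 0\quad\text{in }\mathbb{D}^*.
\end{equation*}
The crux of the argument is to prove $\rho_n\to 0$. Testing this relation against $(u_n,v_n)$ and using that $\langle J'_\nu(u_n,v_n)\,|\,(u_n,v_n)\rangle=\Phi_\nu(u_n,v_n)=0$ by \eqref{phi function} and the membership $(u_n,v_n)\in\mathcal{N}_\nu$, together with the boundedness just established, yields
\begin{equation*}
\rho_n\,\langle\Phi'_\nu(u_n,v_n)\,|\,(u_n,v_n)\rangle=o(1).
\end{equation*}

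The main obstacle is to bound the quantity $\langle\Phi'_\nu(u_n,v_n)\,|\,(u_n,v_n)\rangle=J''_\nu(u_n,v_n)[u_n,v_n]^2$ uniformly away from zero. Here I would use the explicit form \eqref{second order derivative}: since $2-\alpha-\beta<0$ and the two critical coefficients $\alpha+\beta-2_{s_i}^*$ are nonpositive by \eqref{ alpha beta condition}, combined with the uniform lower bound $\|(u_n,v_n)\|_{\mathbb{D}}>r_\nu$ from \eqref{ norm equal  r}, I obtain
\begin{equation*}
\langle\Phi'_\nu(u_n,v_n)\,|\,(u_n,v_n)\rangle\le(2-\alpha-\beta)\|(u_n,v_n)\|_{\mathbb{D}}^{2}<(2-\alpha-\beta)\,r_\nu^{2}<0,
\end{equation*}
so this quantity stays bounded away from zero, forcing $\rho_n\to 0$. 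Finally, since $\{(u_n,v_n)\}$ is bounded and $\Phi_\nu$ is $C^1$ with $\Phi'_\nu$ mapping bounded sets to bounded sets (via the Sobolev embeddings \eqref{embeddings} and the integrability \eqref{condition on h}), the sequence $\{\Phi'_\nu(u_n,v_n)\}$ is bounded in $\mathbb{D}^*$; combining $\rho_n\to 0$ with the Lagrange relation gives $J'_\nu(u_n,v_n)\to 0$ in $\mathbb{D}^*$, which completes the proof.
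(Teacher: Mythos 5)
Your proposal is correct and follows essentially the same route as the paper's proof: boundedness from the Nehari-constrained form \eqref{two one four} of $J_\nu$ together with $\alpha+\beta>2$ and $\alpha+\beta\le\min\{2_{s_1}^*,2_{s_2}^*\}$, then the Lagrange multiplier rule with the key estimate $\langle\Phi'_\nu(u_n,v_n)\,|\,(u_n,v_n)\rangle\le(2-\alpha-\beta)r_\nu^2<0$ from \eqref{second order derivative} and \eqref{ norm equal  r} forcing the multipliers to vanish. The only (welcome) addition is that you state explicitly that $\Phi'_\nu$ maps bounded sets to bounded sets before concluding $J'_\nu(u_n,v_n)\to 0$, a step the paper leaves implicit.
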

\begin{proof}
Assume that $\{(u_{n},v_{n})\}\subset \mathcal{N}_{\nu} $ be a Palais-Smale sequence for $J_{\nu}$ at level $c$, then 
\begin{align}
    \begin{split}
        J(u_{n},v_{n}) \rightarrow c~~as~~n \rightarrow \infty ,~\text{i.e.,}~ c+ o(1) = J(u_{n},v_{n}) \hspace{1cm}
    \end{split}
\end{align}
and we recall that
\begin{align}
     J(u_{n},v_{n}) = \frac{1}{2} \|(u_{n},v_{n})\|_{\mathbb{D}}^{2} - \frac{1}{2_{s_{1}}^{*}}\|u_{n}\|_{{2_{s_{1}}^{*}}}^{2_{s_{1}}^{*}} - \frac{1}{2_{s_{2}}^{*}}\|v_{n}\|_{{2_{s_{2}}^{*}}}^{2_{s_{2}}^{*}} - \nu \int_{\mathbb{R}^{N}} h(x)|u_{n}|^{\alpha}|v_{n}|^{\beta}\mathrm{d}x.
\end{align}
For $(u_{n},v_{n}) \in \mathcal{N}_{\nu},$ we have
\begin{align}
      \|(u_{n},v_{n})\|_{\mathbb{D}}^{2} =\|u_{n}\|_{{2_{s_{1}}^{*}}}^{2_{s_{1}}^{*}} + \|v_{n}\|_{{2_{s_{2}}^{*}}}^{2_{s_{2}}^{*}} + \nu (\alpha + \beta) \int_{\mathbb{R}^{N}} h(x)|u_{n}|^{\alpha}|v_{n}|^{\beta}\mathrm{d}x 
\end{align}
By combining the above two equations, we get
\begin{align*}
    J(u_{n},v_{n}) &= \frac{1}{2} \|(u_{n},v_{n})\|_{\mathbb{D}}^{2} - \frac{1}{2_{s_{1}}^{*}}\|u_{n}\|_{{2_{s_{1}}^{*}}}^{2_{s_{1}}^{*}} - \frac{1}{2_{s_{2}}^{*}}\|v_{n}\|_{{2_{s_{2}}^{*}}}^{2_{s_{2}}^{*}}
    -\frac{1}{\alpha+\beta} \bigg( \|(u_{n},v_{n})\|_{\mathbb{D}}^{2}-\|u_{n}\|_{{2_{s_{1}}^{*}}}^{2_{s_{1}}^{*}} -\|v_{n}\|_{{2_{s_{2}}^{*}}}^{2_{s_{2}}^{*}} \bigg) \\
    &= \bigg(\frac{1}{2}  -\frac{1}{\alpha+\beta}\bigg) \|(u_{n},v_{n})\|_{\mathbb{D}}^{2}+ \bigg(  \frac{1}{\alpha+\beta}-\frac{1}{2_{s_{1}}^{*}}\bigg) \|u_{n}\|_{{2_{s_{1}}^{*}}}^{2_{s_{1}}^{*}} + \bigg(  \frac{1}{\alpha+\beta}-\frac{1}{2_{s_{2}}^{*}}\bigg)\|v_{n}\|_{{2_{s_{2}}^{*}}}^{2_{s_{2}}^{*}}.\\
  & \geq \bigg(\frac{1}{2}  -\frac{1}{\alpha+\beta}\bigg) \|(u_{n},v_{n})\|_{\mathbb{D}}^{2}
\end{align*}
Thus, we have
\[c + o(1) \geq \bigg(\frac{1}{2}  -\frac{1}{\alpha+\beta}\bigg) \|(u_{n},v_{n})\|_{\mathbb{D}}^{2}.\]
Thus the sequence $\{ (u_{n},v_{n})\}$ is bounded in $\mathbb{D}$. Furthermore, we deduce the following by considering the functional $\Phi_{\nu}$ given by (\ref{phi function}) with the inequalities (\ref{second order derivative}) and (\ref{ norm equal  r})  
\begin{align} \label{phi r inequality}
    \langle\Phi'_{\nu}(u_{n},v_{n})|(u_{n},v_{n})\rangle \leq (2-\alpha-\beta) r_{\nu}^{2}.
\end{align}
By the Lagrange multiplier method, we can assume the sequence of multipliers $\{ \omega_{n}\} \subset \mathbb{R}$ such that
\begin{align} \label{J restricted on N with LM}
     (J_{\nu}|_{\mathcal{N}_{\nu} } )'(u_{n},v_{n}) =  J_{\nu}'(u_{n},v_{n}) - \omega_{n} \Phi'_{\nu}(u_{n},v_{n}) ~\text{in the dual space}~\mathbb{D}^{*}.
\end{align}
Since $(J_{\nu}|_{\mathcal{N}_{\nu} } )'(u_{n},v_{n})$ converges to $0$ as $n \rightarrow \infty$ in the dual space $\mathbb{D}^{*}$, this implies that $$\langle (J_{\nu}|_{\mathcal{N}_{\nu} } )'  (u_{n},v_{n}) |  (u_{n},v_{n}) \rangle \to 0 \text{ as } n \rightarrow \infty. $$ This further implies that $ - \omega_{n} \langle \Phi'_{\nu}(u_{n},v_{n}) | (u_{n},v_{n}) \rangle  \to 0$. Finally, we know that $\langle \Phi'_{\nu}(u_{n},v_{n}) | (u_{n},v_{n}) \rangle <0$ and hence, we have $\omega_{n} \rightarrow 0 ~\text{in}~\mathbb{R}~\text{as}~n \rightarrow \infty$. Thus, (\ref{J restricted on N with LM}) directly implies (\ref{palais smale condition}).
\end{proof}
 Further, we prove the boundedness of the Palais-Smale sequence in $\mathbb{D}$.
\begin{lemma} \label{boundedness lemma}
Let us assume that (\ref{ alpha beta condition}) and (\ref{condition on h}) are satisfied and that $\{(u_{n},v_{n})\} \subset \mathbb{D}$ be a (PS) sequence for the functional $J_{\nu}$ at level $c \in \mathbb{R}$. Then the sequence $\{(u_{n},v_{n})\}$ is bounded in $\mathbb{D}$.
\end{lemma}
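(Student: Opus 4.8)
The plan is to run the classical coercivity argument for Palais--Smale sequences, choosing the scaling coefficient so that the non-compact coupling term cancels out exactly. First I would record the two defining properties of the sequence: since $\{(u_n,v_n)\}$ is a (PS) sequence at level $c$, we have $J_\nu(u_n,v_n)=c+o(1)$ and $J'_\nu(u_n,v_n)\to 0$ in $\mathbb{D}^*$, whence, testing the derivative against $(u_n,v_n)$ itself,
\begin{equation*}
\langle J'_\nu(u_n,v_n)\,|\,(u_n,v_n)\rangle \le \|J'_\nu(u_n,v_n)\|_{\mathbb{D}^*}\,\|(u_n,v_n)\|_{\mathbb{D}} = o(1)\,\|(u_n,v_n)\|_{\mathbb{D}}.
\end{equation*}

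The crucial step is to consider the combination $J_\nu(u_n,v_n)-\frac{1}{\alpha+\beta}\langle J'_\nu(u_n,v_n)\,|\,(u_n,v_n)\rangle$. Using the expression \eqref{energy functional} for $J_\nu$ together with the formula for the Fr\'{e}chet derivative, the term $\nu\int_{\mathbb{R}^N}h(x)|u_n|^\alpha|v_n|^\beta\,\mathrm{d}x$ occurs in $J_\nu$ with coefficient $\nu$ and in $\langle J'_\nu\,|\,\cdot\rangle$ with coefficient $\nu(\alpha+\beta)$; the choice of the factor $\frac{1}{\alpha+\beta}$ is precisely what makes this coupling term drop out. After cancellation I expect to obtain
\begin{equation*}
J_\nu(u_n,v_n)-\frac{1}{\alpha+\beta}\langle J'_\nu(u_n,v_n)\,|\,(u_n,v_n)\rangle
= \left(\frac{1}{2}-\frac{1}{\alpha+\beta}\right)\|(u_n,v_n)\|_{\mathbb{D}}^2
+\left(\frac{1}{\alpha+\beta}-\frac{1}{2_{s_1}^{*}}\right)\|u_n\|_{2_{s_1}^{*}}^{2_{s_1}^{*}}
+\left(\frac{1}{\alpha+\beta}-\frac{1}{2_{s_2}^{*}}\right)\|v_n\|_{2_{s_2}^{*}}^{2_{s_2}^{*}}.
\end{equation*}

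Now I would read off the signs of the three coefficients from hypothesis (\ref{ alpha beta condition}). Since $\alpha,\beta>1$ forces $\alpha+\beta>2$, the leading coefficient $\frac{1}{2}-\frac{1}{\alpha+\beta}$ is strictly positive; and since $\alpha+\beta\le\min\{2_{s_1}^{*},2_{s_2}^{*}\}$, each of the remaining two coefficients is non-negative. Discarding the last two non-negative terms and combining the displayed identity with the two (PS) estimates yields the quadratic inequality
\begin{equation*}
\left(\frac{1}{2}-\frac{1}{\alpha+\beta}\right)\|(u_n,v_n)\|_{\mathbb{D}}^2 \le c+o(1)+o(1)\,\|(u_n,v_n)\|_{\mathbb{D}}.
\end{equation*}
As the coefficient on the left is a fixed positive constant, this inequality in $\|(u_n,v_n)\|_{\mathbb{D}}$ forces the sequence $\{\|(u_n,v_n)\|_{\mathbb{D}}\}$ to be bounded, which is the claim. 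The only delicate point to watch is the critical coupling case $\alpha+\beta=2_{s_i}^{*}$: there the corresponding coefficient vanishes, so no extra control on $\|u_n\|_{2_{s_1}^{*}}$ or $\|v_n\|_{2_{s_2}^{*}}$ is gained, yet the strict positivity of $\frac{1}{2}-\frac{1}{\alpha+\beta}$ (resting solely on $\alpha+\beta>2$) still delivers the full $\mathbb{D}$-norm bound. Thus the boundedness is essentially a consequence of the strict superquadraticity $\alpha+\beta>2$ of the coupling together with its subcriticality $\alpha+\beta\le 2_{s_i}^{*}$ relative to the pure critical terms, and unlike Lemma \ref{equivalent of critical points lemma} it uses only the (PS) conditions rather than membership in $\mathcal{N}_\nu$.
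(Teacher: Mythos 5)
Your proposal is correct and follows essentially the same route as the paper's own proof: forming the combination $J_{\nu}(u_n,v_n)-\frac{1}{\alpha+\beta}\langle J'_{\nu}(u_n,v_n)\,|\,(u_n,v_n)\rangle$ so that the coupling term cancels, using $\alpha+\beta>2$ and $\alpha+\beta\le\min\{2_{s_1}^{*},2_{s_2}^{*}\}$ to read off the signs, and concluding boundedness from the resulting quadratic inequality in $\|(u_n,v_n)\|_{\mathbb{D}}$. The only difference is cosmetic: the paper tests $J'_{\nu}$ against the normalized pair $(u_n,v_n)/\|(u_n,v_n)\|_{\mathbb{D}}$, while you bound the pairing directly by $o(1)\,\|(u_n,v_n)\|_{\mathbb{D}}$, which is the same estimate.
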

\begin{proof}
Given $\{(u_{n},v_{n})\}\subset \mathbb{D}$ is a (PS) sequence for $J_{\nu}$ at level $c$, then as $n \rightarrow \infty$
\begin{align}
    J_{\nu}(u_{n},v_{n}) &\rightarrow c~\text{in}~\mathbb{R} ,\label{e1}\\
     J'_{\nu}(u_{n},v_{n}) &\rightarrow 0 ~\text{in} ~\mathbb{D}^*. \label{e2}
\end{align}
Using \eqref{e2}, we can write
\begin{align*}
    \bigg< J'_{\nu}(u_{n},v_{n}) \bigg| \frac{(u_{n},v_{n})}{\|(u_{n},v_{n})\|_{\mathbb{D}}} \bigg> \rightarrow 0 ~\text{in}~\mathbb{R}.
\end{align*}
Thus from the above, we have
    \begin{align*}
     \|(u_{n},v_{n})\|_{\mathbb{D}}^{2} -\|u_{n}\|_{{2_{s_{1}}^{*}}}^{2_{s_{1}}^{*}} - \|v_{n}\|_{{2_{s_{2}}^{*}}}^{2_{s_{2}}^{*}} - \nu (\alpha + \beta) \int_{\mathbb{R}^{N}} h(x)|u_{n}|^{\alpha}|v_{n}|^{\beta}\mathrm{d}x = o(\|(u_{n},v_{n})\|_{\mathbb{D}})~\text{as}~n \rightarrow \infty.
   \end{align*}
Also, from \eqref{e1} we obtain the following
\begin{align*}
   \frac{1}{2} \|(u_{n},v_{n})\|_{\mathbb{D}}^{2} - \frac{1}{2_{s_{1}}^{*}}\|u_{n}\|_{{2_{s_{1}}^{*}}}^{2_{s_{1}}^{*}} - \frac{1}{2_{s_{2}}^{*}}\|v_{n}\|_{{2_{s_{2}}^{*}}}^{2_{s_{2}}^{*}} - \nu \int_{\mathbb{R}^{N}} h(x)|u_{n}|^{\alpha}|v_{n}|^{\beta}\mathrm{d}x = c+o(1) ~\text{as}~n \rightarrow \infty.
\end{align*}
Thus, we can write
\begin{align*}
   J_{\nu}(u_{n},v_{n}) - \frac{1}{\alpha+\beta} \langle J'_{\nu}(u_{n},v_{n}) | { \frac{(u_{n},v_{n})}{\|(u_{n},v_{n})\|_{\mathbb{D}}} }\rangle = c + o(1)+ o(\|(u_{n},v_{n})\|_{\mathbb{D}}) ~\text{as}~n \rightarrow \infty,
\end{align*}
and hence,
\begin{align*}
    \bigg(\frac{1}{2}  -\frac{1}{\alpha+\beta} \bigg) \|(u_{n},v_{n})\|_{\mathbb{D}}^{2} &\leq \bigg(\frac{1}{2}  -\frac{1}{\alpha+\beta} \bigg) \|(u_{n},v_{n})\|_{\mathbb{D}}^{2}+ \\ &\quad +\bigg(\frac{1}{\alpha+\beta} -\frac{1}{{2_{s_{1}}^{*}}} \bigg) \|u_{n}\|_{{2_{s_{1}}^{*}}}^{2_{s_{1}}^{*}}+ \bigg(\frac{1}{\alpha+\beta} -\frac{1}{{2_{s_{2}}^{*}}} \bigg)\|v_{n}\|_{{2_{s_{2}}^{*}}}^{2_{s_{2}}^{*}}\\
    &= c + o(1)+ o(\|(u_{n},v_{n})\|_{\mathbb{D}}) ~\text{as}~ n\rightarrow \infty.
\end{align*}
Thus we can conclude that the sequence $\{(u_{n},v_{n})\}$ is bounded in $\mathbb{D}$.
\end{proof}
Now we derive the non-local version of Lemma 3.3 in \cite{Abdellaoui2009}.
\begin{lemma}\label{Abdelloui fractional version}
Let $C,D>0$ and $\delta \geq 2$ be fixed . Also, assume that for any $\nu >0$
$$ T_{\nu} = \{ \varrho \in \mathbb{R}^{+} ~~|~~ C\varrho^{\frac{N-2s}{N}} \leq \varrho + D\nu \varrho^{\frac{\delta}{2} \big(\frac{N-2s}{N}\big)} \}.$$
Then for every $\epsilon>0,$ there is a $\nu_{1}>0$ depending only on $\epsilon,C,D,\nu,N$ and $s$ such that $$ \inf{T_{\nu}} \geq (1-\epsilon)C^{\frac{N}{2s}}~\text{for all}~0<\nu<\nu_{1}.$$
\end{lemma}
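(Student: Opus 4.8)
The plan is to reduce the set-theoretic condition defining $T_\nu$ to a single monotone scalar inequality and then locate its threshold explicitly. Writing $\theta := \frac{N-2s}{N} \in (0,1)$ (so that $1-\theta = \frac{2s}{N}$ and $\frac{1}{1-\theta} = \frac{N}{2s}$), I would first divide the defining inequality $C\varrho^\theta \le \varrho + D\nu\varrho^{\frac{\delta}{2}\theta}$ by the positive quantity $\varrho^\theta$. This recasts membership in $T_\nu$ as
\[
\varrho \in T_\nu \iff g_\nu(\varrho) := \varrho^{\,1-\theta} + D\nu\,\varrho^{(\frac{\delta}{2}-1)\theta} \ge C .
\]
The point of this rewriting is that both exponents are non-negative: $1-\theta = \frac{2s}{N} > 0$ always, and $(\frac{\delta}{2}-1)\theta \ge 0$ precisely because $\delta \ge 2$ and $\theta>0$. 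Hence $g_\nu$ is increasing on $\mathbb{R}^{+}$, strictly so thanks to the first term, meaning $T_\nu$ is a half-line and it suffices to bound the single number $\inf T_\nu$ from below.

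Next I would test the candidate lower bound $a := (1-\epsilon)C^{\frac{N}{2s}}$ directly. Since $g_\nu$ is increasing, the inclusion $T_\nu \subseteq [a,\infty)$, equivalently $\inf T_\nu \ge a$, holds as soon as $g_\nu(a) \le C$: indeed for every $\varrho \in (0,a)$ monotonicity gives $g_\nu(\varrho) < g_\nu(a) \le C$, so no such $\varrho$ can belong to $T_\nu$. It therefore remains only to arrange $g_\nu(a) \le C$ for small $\nu$, and the exponent bookkeeping does the work. The first term satisfies $a^{1-\theta} = (1-\epsilon)^{1-\theta}\bigl(C^{\frac{N}{2s}}\bigr)^{1-\theta} = (1-\epsilon)^{1-\theta} C$, using the identity $\frac{N}{2s}(1-\theta)=1$, and this is strictly less than $C$ because $(1-\epsilon)^{1-\theta}<1$; the second term equals $D\nu K$ with $K := a^{(\frac{\delta}{2}-1)\theta}$ a fixed positive constant independent of $\nu$.

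Consequently $g_\nu(a) = (1-\epsilon)^{1-\theta}C + D\nu K$, and the condition $g_\nu(a)\le C$ is equivalent to the explicit smallness requirement
\[
\nu \le \nu_1 := \frac{\bigl(1-(1-\epsilon)^{1-\theta}\bigr)\,C}{D\,K},
\]
a strictly positive threshold depending only on $\epsilon, C, D, N, s$ (and $\delta$ through $K$), as required. I would close by simply reading off that for all $0<\nu<\nu_1$ one has $\inf T_\nu \ge a = (1-\epsilon)C^{\frac{N}{2s}}$.

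I do not anticipate a genuine obstacle; the only points demanding care are the boundary case $\delta = 2$, where the second exponent degenerates to $0$ and the perturbing term becomes the constant $D\nu$ (still covered, since the first term alone forces strict monotonicity of $g_\nu$), and the use of $2s<N$ to ensure $\theta\in(0,1)$ so that the power manipulations and the identity $\bigl(C^{\frac{N}{2s}}\bigr)^{1-\theta}=C$ are legitimate. The one conceptual step worth isolating is the observation that the perturbation vanishes linearly in $\nu$ at the fixed abscissa $a$ while the unperturbed value $a^{1-\theta}$ already sits strictly below $C$, leaving exactly the room needed to absorb it.
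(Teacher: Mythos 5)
Your proof is correct, and it reaches the conclusion by a genuinely different (and more elementary) route than the paper. Both arguments reduce the definition of $T_\nu$ to a monotone scalar inequality, but the normalizations differ: you divide by $\varrho^{\theta}$, $\theta=\tfrac{N-2s}{N}$, so that membership reads $g_\nu(\varrho)=\varrho^{1-\theta}+D\nu\,\varrho^{(\frac{\delta}{2}-1)\theta}\ge C$, and monotonicity of $g_\nu$ is free because both exponents are nonnegative (this is exactly where $\delta\ge 2$ enters); evaluating at $a=(1-\epsilon)C^{\frac{N}{2s}}$ then yields an explicit threshold. The paper divides instead by $\varrho^{\frac{\delta}{2}\theta}=\varrho^{\delta/2_{s}^{*}}$, which produces a $\nu$-independent function $F(\varrho)=C\varrho^{(2-\delta)/2_{s}^{*}}-\varrho^{(2_{s}^{*}-\delta)/2_{s}^{*}}$ with $T_\nu=\{F\le D\nu\}$; it must then compute $F'$ and split into the cases $\delta\le 2_{s}^{*}$ (where $F$ is globally decreasing) and $\delta>2_{s}^{*}$ (where $F$ has an interior negative minimum but is still decreasing on $(0,C^{N/(2s)}]$), concluding via continuity of the inverse that $\inf T_\nu=F^{-1}(D\nu)\to C^{\frac{N}{2s}}$ as $\nu\to 0^{+}$, so $\nu_1$ exists but is never exhibited. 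What your version buys is the absence of calculus and of the case analysis on $\delta$ versus $2_{s}^{*}$, plus a quantitative threshold $\nu_1=\bigl(1-(1-\epsilon)^{2s/N}\bigr)C/(DK)$ with $K=a^{(\frac{\delta}{2}-1)\theta}$; what the paper's version buys is the exact identification $\inf T_\nu=F^{-1}(D\nu)$, which makes the limiting behavior as $\nu\to 0^{+}$ transparent. Two small remarks: your argument tacitly assumes $\epsilon<1$ (if $\epsilon\ge 1$ the claim is vacuous since $T_\nu\subset\mathbb{R}^{+}$), and you are right that $\nu_1$ depends on $\delta$ through $K$; the dependence on $\nu$ listed in the statement is evidently a typo, since a threshold for $\nu$ cannot depend on $\nu$ itself.
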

\begin{proof}
For $\varrho \in T_{\nu}$,
\begin{align*}
    &C\varrho^{\frac{N-2s}{N}} \leq \varrho + D\nu \varrho^{\frac{\delta}{2} \big(\frac{N-2s}{N}\big)} = \varrho + D\nu \varrho^{\frac{\delta}{2_{s}^{*}}}\\
     &C\varrho^{1-\frac{2s}{N} -\frac{\delta}{2_{s}^{*}} } - \varrho^{1-\frac{\delta}{2_{s}^{*}}} \leq D\nu \\
      &F(\varrho) \leq D\nu, ~\text{where}~ F(\varrho) = C\varrho^{1-\frac{2s}{N} -\frac{\delta}{2_{s}^{*}} } - \varrho^{1-\frac{\delta}{2_{s}^{*}}}. 
\end{align*}
Hence, we notice that $T_{\nu}  = \{ \varrho \in \mathbb{R}^{+}~|~ F(\varrho) \leq D\nu \}$. Also observe that $F(C^{\frac{N}{2s}}) = 0$ and $$ F'(\varrho) = \varrho^{-\frac{\delta}{2_{s}^{*}}} \bigg[ \bigg( \frac{2-\delta}{2_{s}^{*}}\bigg)C\varrho^{-\frac{2s}{N}} - \bigg( \frac{2_{s}^{*}-\delta}{2_{s}^{*}} \bigg) \bigg].$$
If $\delta \leq 2_{s}^{*}$, then $F'(\varrho) <0$ i.e. the function $F$ is strictly decreasing function. If $\delta > 2_{s}^{*}$ and $ F'(\varrho) = 0$ then $$ \varrho = \bigg(  \frac{C(\delta-2)}{\delta-2_{s}^{*}}\bigg)^{\frac{N}{2s}}> A^{\frac{N}{2s}},$$
which implies that $F$ has a global negative minimum at $\varrho = \big(  \frac{C(\delta-2)}{\delta-2_{s}^{*}}\big)^{\frac{N}{2s}}> C^{\frac{N}{2s}}$  and $F$ tends to $0$ as $\varrho \rightarrow +\infty.$ In any case, $F$ is strictly decreasing in $(0,C^{\frac{N}{2s}}]$ and it has only one zero at $C^{\frac{N}{2s}}$ with $\lim_{\varrho \rightarrow 0^{+}}F(\varrho) = +\infty$ and $F(\varrho) < 0 $ in $(C^{\frac{N}{2s}}, +\infty)$. Hence, $\inf{T_{\nu}} = F^{-1}(D\nu) \rightarrow C^{\frac{N}{2s}}~as~\nu \rightarrow 0^{+}$ and the conclusion follows.
\end{proof}
\subsection{The case \texorpdfstring{$\alpha +\beta < \min\{2_{s_{1}}^{*},2_{s_{2}}^{*}\}$}{TEXT}}
In the following, we prove the Palais-Smale compactness condition of the functional $J_\nu$ at level $c$.
\begin{lemma} \label{PS compactness lemma}
Suppose $\alpha +\beta < \min\{2_{s_{1}}^{*},2_{s_{2}}^{*}\}$ and (\ref{condition on h}). Then, the functional $J_{\nu}$ satisfies the (PS) condition for any level $c$ satisfying
\begin{align} \label{energy level PS}
    c <  \min \bigg\{\frac{s_{1}}{N}S^{\frac{N}{2s_{1}}}(\lambda_{1}), \frac{s_{2}}{N} S^{\frac{N}{2s_{2}}}(\lambda_{2})\bigg\}.
\end{align}
\end{lemma}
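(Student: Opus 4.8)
The plan is to follow the classical Brezis--Nirenberg scheme, adapted to the nonlocal and coupled setting, using the fractional concentration-compactness principle together with the energy threshold in \eqref{energy level PS}. Let $\{(u_n,v_n)\}\subset\mathbb{D}$ be a (PS) sequence for $J_\nu$ at level $c$. By Lemma \ref{boundedness lemma} it is bounded, so up to a subsequence there exists $(u_0,v_0)\in\mathbb{D}$ with $(u_n,v_n)\rightharpoonup(u_0,v_0)$ weakly in $\mathbb{D}$, strongly in $L^p_{\mathrm{loc}}$ for every $p<2^*_{s_i}$, and pointwise a.e. The goal is to upgrade this to strong convergence in $\mathbb{D}$.

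First I would dispose of the coupling term. Since $\alpha+\beta<\min\{2^*_{s_1},2^*_{s_2}\}$ is \emph{strictly} subcritical and $h\in L^1(\mathbb{R}^N)\cap L^\infty(\mathbb{R}^N)$, the functional $I(u,v)=\int_{\mathbb{R}^N}h|u|^\alpha|v|^\beta\,\mathrm{d}x$ and its derivative are completely continuous along the bounded sequence: the $L^1$-summability of $h$ provides tightness so that no mass of the coupling integral escapes to infinity, while strict subcriticality yields local compactness through the compact embedding $\mathcal{D}^{s_i,2}(\mathbb{R}^N)\hookrightarrow L^p_{\mathrm{loc}}(\mathbb{R}^N)$. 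Hence $I(u_n,v_n)\to I(u_0,v_0)$ and the coupling contributions to $\langle J'_\nu(u_n,v_n)\,|\,\cdot\,\rangle$ pass to the limit. This is precisely where the hypothesis $\alpha+\beta<\min\{2^*_{s_1},2^*_{s_2}\}$ is genuinely used, and it reduces the question to the two \emph{decoupled} critical terms $\|u_n\|_{2^*_{s_1}}^{2^*_{s_1}}$ and $\|v_n\|_{2^*_{s_2}}^{2^*_{s_2}}$.

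Next I would apply the fractional concentration-compactness principle of \cite{Bonder2018,Chen2018,Pucci2021} separately to $\{u_n\}$ and $\{v_n\}$. This produces, for the weak-$*$ limits of the measures attached to the Gagliardo energy and to the critical nonlinearity, a decomposition into the absolutely continuous part coming from $u_0$ (resp. $v_0$), a countable family of atoms at points $x_j$, and the masses $\mu_\infty,\nu_\infty$ escaping to infinity. The Sobolev--Hardy inequalities \eqref{S lambda}--\eqref{embeddings} then force each nontrivial atom and the mass at infinity to carry a definite amount of critical mass: at a point $x_j\neq 0$ or at infinity the Hardy weight $|x|^{-2s_i}$ is a lower-order perturbation and the relevant constant is the pure Sobolev constant $S_i=S(0)$, whereas at the origin the singular potential interacts with concentration and the constant drops to $S(\lambda_i)\le S_i$. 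Thus the least expensive way to lose compactness is concentration at the origin, which costs exactly $\tfrac{s_i}{N}S^{N/2s_i}(\lambda_i)$; this is what fixes the threshold in \eqref{energy level PS}.

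Ruling out concentration is the main obstacle, and the technical heart of the argument. I would test the relation $\langle J'_\nu(u_n,v_n)\,|\,(\varphi_\epsilon^2 u_n,0)\rangle\to 0$ (and its $v$-analogue) against cut-off functions $\varphi_\epsilon$ localizing near each candidate atom, and against cut-offs supported outside large balls for the mass at infinity, passing to the limit first in $n$ and then in $\epsilon$. Because the coupling term is already compact, this shows each surviving atom and any nontrivial $\mu_\infty$ would contribute at least $\tfrac{s_i}{N}S^{N/2s_i}(\lambda_i)$ to the energy; combined with the representation \eqref{energy functional on Nehari manifold}, in which every term is nonnegative (here $\alpha+\beta>2$ and $h>0$), the presence of a single nontrivial atom would force $c\ge\min\{\tfrac{s_1}{N}S^{N/2s_1}(\lambda_1),\tfrac{s_2}{N}S^{N/2s_2}(\lambda_2)\}$, contradicting \eqref{energy level PS}. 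The delicate points are the fractional cut-off/commutator estimates for the Gagliardo seminorm, which does \emph{not} localize exactly as in the local case, and the careful control of the singular Hardy term at the origin. Having excluded all atoms and the mass at infinity, I conclude $u_n\to u_0$ in $L^{2^*_{s_1}}(\mathbb{R}^N)$ and $v_n\to v_0$ in $L^{2^*_{s_2}}(\mathbb{R}^N)$; inserting this and the compactness of $I$ into $\langle J'_\nu(u_n,v_n)\,|\,(u_n-u_0,v_n-v_0)\rangle\to 0$ yields $\|(u_n,v_n)\|_{\mathbb{D}}\to\|(u_0,v_0)\|_{\mathbb{D}}$, and since $\mathbb{D}$ is a Hilbert space this gives the strong convergence $(u_n,v_n)\to(u_0,v_0)$, establishing the (PS) condition.
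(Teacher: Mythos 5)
Your overall scheme is the same as the paper's: boundedness of the (PS) sequence, the fractional concentration--compactness decomposition of \cite{Bonder2018,Chen2018,Pucci2021} applied componentwise, cut-off testing of $J_\nu'$, and the threshold \eqref{energy level PS} to rule out every atom. Your one structural variant --- proving once and for all that the coupling functional $I(u,v)=\int_{\mathbb{R}^N}h|u|^{\alpha}|v|^{\beta}\,\mathrm{d}x$ is completely continuous, rather than killing the coupling term locally inside each cut-off test as the paper does --- is correct and arguably cleaner, since $\frac{\alpha}{2^*_{s_1}}+\frac{\beta}{2^*_{s_2}}<1$ together with $h\in L^1(\mathbb{R}^N)\cap L^{\infty}(\mathbb{R}^N)$ does give uniform integrability and tightness via the same H\"older splitting the paper uses.

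However, there is a genuine error in your treatment of the mass at infinity. You assert that at infinity the Hardy weight $|x|^{-2s_i}$ is a lower-order perturbation and that the relevant constant is the pure Sobolev constant $S_i$. This is false: the Hardy potential is invariant under the critical scaling $u\mapsto \tau^{(N-2s_i)/2}u(\tau\,\cdot)$, so a dilation-type profile spreading out to infinity (e.g. $u_n=n^{-(N-2s_1)/2}\phi(\cdot/n)$ with $\phi\in C_c^{\infty}$) carries $\mu_\infty$, $\rho_\infty$ and $\gamma_\infty$ simultaneously; in particular $\gamma_\infty>0$ is perfectly possible. Testing with the cut-off supported near infinity only yields $\mu_\infty-\lambda_1\gamma_\infty-\rho_\infty\le 0$, and combining this with the plain Sobolev bound $\mu_\infty\ge S_1\rho_\infty^{2/2^*_{s_1}}$ produces no dichotomy at all, because the term $\lambda_1\gamma_\infty$ you discarded sits on the wrong side of the inequality. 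What is needed is the Hardy--Sobolev inequality at infinity, $\mu_\infty-\lambda_1\gamma_\infty\ge S(\lambda_1)\rho_\infty^{2/2^*_{s_1}}$, i.e. inequality (1.14) of \cite[Theorem 1.3]{Pucci2021}, which is exactly what the paper invokes in \eqref{functional with first component at infty} to obtain the dichotomy \eqref{rho infty inequalities}: either $\rho_\infty=0$ or $\rho_\infty\ge S^{\frac{N}{2s_1}}(\lambda_1)$. With this correction your contradiction still closes, since the cost of concentration at infinity, $\frac{s_1}{N}S^{\frac{N}{2s_1}}(\lambda_1)$, still exceeds the threshold \eqref{energy level PS}; so the gap is repairable, but as written your argument does not exclude loss of compactness at infinity.
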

\begin{proof}
We know by Lemma \ref{boundedness lemma} that any {(PS)} sequence $\{(u_{n},v_{n})\}$ is bounded in $\mathbb{D}$. So, there exists a subsequence denoted by $\{(u_{n},v_{n})\}$ itself and a $(\Tilde{u},\Tilde{v})\in \mathbb{D}$ satisfying the following
\begin{align*}
    (u_{n},v_{n}) &\rightharpoonup (\Tilde{u},\Tilde{v}) ~\text{weakly in}~\mathbb{D}, \\
    (u_{n},v_{n}) &\rightarrow (\Tilde{u},\Tilde{v})~\text{strongly in}~ L_{loc}^{q_{1}}(\mathbb{R}^{N}) \times L_{loc}^{q_{2}}(\mathbb{R}^{N})  ~\text{for}~1\leq q_{1} < 2_{s_{1}}^{*},1\leq q_{2} < 2_{s_{2}}^{*},\\
    (u_{n},v_{n}) &\rightarrow (\Tilde{u},\Tilde{v})  ~\text{a.e.~in}~~\mathbb{R}^{N}.  
\end{align*}
Now by using the concentration–compactness principle of Bonder \cite[Theorem 1.1]{Bonder2018}, Chen \cite[Lemma 4.5]{Chen2018} and an analogous version of Pucci \cite[Theorem 1.2]{Pucci2021}, there exist a subsequence, still denoted as $\{(u_{n},v_{n})\}$, two at most countable sets of points $\{x_{j}\}_{j \in \mathcal{J}} \subset \mathbb{R}^{N}$ and $\{y_{k}\}_{k \in \mathcal{K}} \subset \mathbb{R}^{N}$, and non-negative numbers $$\{(\mu_{j},\rho_{j})\}_{j \in \mathcal{J}},~ \{(\Bar{\mu}_{k},~\Bar{\rho}_{k})\}_{k \in \mathcal{K}},\mu_{0},~\rho_{0},~\gamma_{0},~\Bar{\mu}_{0},~\Bar{\rho}_{0} \text{ and }\Bar{\gamma}_{0}$$ such that the following convergences hold $weakly^*$ in the sense of measures,
\begin{align} \label{Concentration compactness}
\begin{split}
    |D^{s_{1}}u_{n}|^{2}~& \rightharpoonup \mathrm{d}\mu \geq  |D^{s_{1}}\Tilde{u}|^{2} + \Sigma_{j \in \mathcal{J}} \mu_{j}\delta_{x_{j}} + \mu_{0}\delta_{0},\\
    |D^{s_{2}}v_{n}|^{2}~ &\rightharpoonup \mathrm{d}\Bar{\mu} \geq  |D^{s_{2}}\Tilde{v}|^{2} + \Sigma_{k \in \mathcal{K}} \Bar{\mu}_{k}\delta_{y_{k}} + \Bar{\mu}_{0}\delta_{0},\\
    |u_{n}|^{2_{s_{1}}^{*}}~ &\rightharpoonup \mathrm{d}\rho =|\Tilde{u}|^{2_{s_{1}}^{*}} + \Sigma_{j \in \mathcal{J}} \rho_{j}\delta_{x_{j}} + \rho_{0}\delta_{0}, \\
     |v_{n}|^{2_{s_{2}}^{*}}~ &\rightharpoonup \mathrm{d}\Bar{\rho} =|\Tilde{v}|^{2_{s_{2}}^{*}} + \Sigma_{k \in \mathcal{K}} \Bar{\rho}_{k}\delta_{y_{k}} + \Bar{\rho}_{0}\delta_{0}, \hspace{0.6cm}\\
     \frac{u_{n}^{2}}{|x|^{2s_{1}}} ~ &\rightharpoonup \mathrm{d}\gamma = \frac{\Tilde{u}^{2}}{|x|^{2s_{1}}} + \gamma_{0}\delta_{0}, \\
     \frac{v_{n}^{2}}{|x|^{2s_{2}}} ~ &\rightharpoonup \mathrm{d}\Bar{\gamma} = \frac{\Tilde{v}^{2}}{|x|^{2s_{2}}} + \Bar{\gamma_{0}}\delta_{0},
\end{split}
\end{align}
where $\delta_{0},\delta_{x_{j}},\delta_{y_{k}}$ are the Dirac functions at the points $0,x_{j}~\text{and}~y_{k}$ of $\mathbb{R}^{N}$ respectively. Now from inequality (1.6) of \cite[Theorem 1.1]{Bonder2018} and inequalities (1.7) of \cite[Theorem 1.2]{Pucci2021}, we deduce the inequalities given below
\begin{align} \label{inequality with S}
    \begin{split}
        { S_1} \rho_{j}^{\frac{2}{2_{s_{1}}^{*}}} &\leq \mu_{j} ~\text{for all} ~j \in \mathcal{J} \cup \{ 0\},\\
        { S_2} \Bar{\rho}_{k}^{\frac{2}{2_{s_{2}}^{*}}} &\leq \Bar{\mu}_{k} ~\text{for all}~ k \in \mathcal{K} \cup \{ 0\},
    \end{split}
\end{align}
Also, by taking $\alpha = 2s_{1}$ and $\alpha = 2s_{2}$ in the inequality (4.21) of \cite[Lemma 4.5]{Chen2018} we deduce that
\begin{align} \label{inequality with lambda}
    \begin{split} 
       \Lambda_{N,s_{1}} \gamma_{0} &\leq \mu_{0},  \\
        \Lambda_{N,s_{2}} \Bar{\gamma}_{0} &\leq \Bar{\mu}_{0}.  
    \end{split} 
\end{align}
We denote the concentration of the sequence $\{ u_{n} \}$ at infinity by the following numbers
\begin{align} \label{Concentration at infty}
    \begin{split}
        \rho_{\infty} &= \lim_{R \rightarrow \infty} \limsup_{n \rightarrow \infty} \int_{|x|>R} |u_{n}|^{2_{s_{1}}^{*}} \mathrm{d}x, \\
        \mu_{\infty} &= \lim_{R \rightarrow \infty} \limsup_{n \rightarrow \infty} \int_{|x|>R} |D^{s_{1}}u_{n}|^{2} \mathrm{d}x,\\
        \gamma_{\infty} &= \lim_{R \rightarrow \infty} \limsup_{n \rightarrow \infty} \int_{|x|>R} \frac{u_{n}^{2}}{|x|^{2s_{1}}} \mathrm{d}x. \\
    \end{split}
\end{align}
In a similar way, we can define the concentrations of the sequence $\{ v_{n} \}$ at infinity by the numbers ${\Bar{\mu}_{\infty}}, {\Bar{\rho}_{\infty}}$ and ${\Bar{\gamma}_{\infty}}$.\

Further, we assume that the function $\Psi_{j,\epsilon}(x)$ is a smooth cut-off function centered at points $\{x_{j}\}$, $j \in \mathcal{J}$, satisfying
\begin{align} \label{test function phi at j}
    \Psi_{j,\epsilon} = 1 ~\text{in}~~B_{\frac{\epsilon}{2}}(x_{j}),~~ \Psi_{j,\epsilon} = 0 ~~\text{in}~~B^{c}_{\epsilon}(x_{j}), ~0\leq \Psi_{j,\epsilon} \leq 1~\text{and}~~|\nabla  \Psi_{j,\epsilon}| \leq \frac{4}{\epsilon},
\end{align}
where $B_{r}(x_{j}) = \{y \in \mathbb{R}^N: |y-x_j|<r\}$. Now, testing $J'_{\nu}(u_{n},v_{n})$ with $(u_{n}\Psi_{j,\epsilon}, 0)$ we get
\begin{align} \label{functional with first component}
    \begin{split}
       0 &= \lim_{n \rightarrow +\infty} \big< J'_{\nu}(u_{n},v_{n})| (u_{n}\Psi_{j,\epsilon}, 0)\big> \\
       &= \lim_{n \rightarrow +\infty} \bigg( \iint_{\mathbb{R}^{2N}} \frac{|u_{n}(x)-u_{n}(y)|^{2}}{|x-y|^{N+2s_{1}}} \Psi_{j,\epsilon}(x) \, \mathrm{d}x \mathrm{d}y + \\ &\quad +\iint_{\mathbb{R}^{2N}} \frac{(u_{n}(x)-u_{n}(y))(\Psi_{j,\epsilon}(x)-\Psi_{j,\epsilon}(y))}{|x-y|^{N+2s_{1}}} u_{n}(y)\, \mathrm{d}x \mathrm{d}y -\lambda_{1} \int_{\mathbb{R}^{N}}\frac{u_{n}^{2}}{|x|^{2s_{1}}}\Psi_{j,\epsilon}(x)\, \mathrm{d}x -\\&\quad- \int_{\mathbb{R}^{N}} |u_{n}|^{2_{s_{1}}^{*}}\Psi_{j,\epsilon}(x)\,\mathrm{d}x - \nu \alpha \int_{\mathbb{R}^{N}} h(x)|u_{n}|^{\alpha}|v_{n}|^{\beta}\Psi_{j,\epsilon}(x)\, \mathrm{d}x \bigg) \\
      & = \int_{\mathbb{R}^{N}} \Psi_{j,\epsilon}\, \mathrm{d}\mu - \lambda_{1} \int_{\mathbb{R}^{N}}\Psi_{j,\epsilon}\, \mathrm{d}\gamma - \int_{\mathbb{R}^{N}}\Psi_{j,\epsilon}\, \mathrm{d}\rho \ +\\ &\quad+\lim_{n \rightarrow  + \infty} \iint_{\mathbb{R}^{2N}} \frac{(u_{n}(x)-u_{n}(y))(\Psi_{j,\epsilon}(x)-\Psi_{j,\epsilon}(y))}{|x-y|^{N+2s_{1}}} u_{n}(y)\, \mathrm{d}x \mathrm{d}y \\
      &\quad - \nu \alpha \lim_{n \rightarrow 
       + \infty}  \int_{\mathbb{R}^{N}} h(x)|u_{n}|^{\alpha}|v_{n}|^{\beta}\Psi_{j,\epsilon}(x)\, \mathrm{d}x. \hspace{6.5cm}
    \end{split}
\end{align}
Notice that $0 \notin \text{supp}(\Psi_{j,\epsilon})$ for $\epsilon$ being sufficiently small and also it is given that $h \in L^{1}(\mathbb{R}^{N}) \cap L^{\infty}(\mathbb{R}^{N})$.
Now we evaluate each of the integrals mentioned above when $\epsilon$ tends to $0$. 

\begin{align*}
     \int_{\mathbb{R}^{N}} \Psi_{j,\epsilon}\, \mathrm{d}\mu &\geq \int_{\mathbb{R}^{N}} \Psi_{j,\epsilon}|D^{s_{1}}\Tilde{u}|^{2} \,\mathrm{d}x + \Sigma_{j \in \mathcal{J}} \mu_{j}\delta_{x_{j}}(\Psi_{j,\epsilon}) + \mu_{0}\delta_{0}(\Psi_{j,\epsilon})\\
     &= \int_{\mathbb{R}^{N}} \Psi_{j,\epsilon}|D^{s_{1}}\Tilde{u}|^{2} \,\mathrm{d}x + \Sigma_{j \in \mathcal{J}} \mu_{j}\Psi_{j,\epsilon}(x_j) + \mu_{0}\Psi_{j,\epsilon}(0).
\end{align*}
Taking the limit $\epsilon \rightarrow 0$ and since $0 \notin \text{supp}(\Psi_{j,\epsilon})$ for $\epsilon$ being sufficiently small, we get 
\begin{equation} \label{first integral}
    \lim\limits_{\epsilon \rightarrow 0} \int_{\mathbb{R}^{N}} \Psi_{j,\epsilon}\, \mathrm{d}\mu \geq \mu_{j}.
\end{equation}
and
\begin{align*}
     \int_{\mathbb{R}^{N}} \Psi_{j,\epsilon}\, \mathrm{d}\rho &= \int_{\mathbb{R}^{N}} |\Tilde{u}|^{2_{s_1}^*} \Psi_{j,\epsilon} \,\mathrm{d}x + \Sigma_{j \in \mathcal{J}} \rho_{j}\delta_{x_{j}}(\Psi_{j,\epsilon}) + \rho_{0}\delta_{0}(\Psi_{j,\epsilon})\\
     &= \int_{\mathbb{R}^{N}} |\Tilde{u}|^{2_{s_1}^*}\Psi_{j,\epsilon} \,\mathrm{d}x + \Sigma_{j \in \mathcal{J}} \rho_{j}\Psi_{j,\epsilon}(x_j) + \rho_{0}\Psi_{j,\epsilon}(0).
\end{align*}
Taking the limit $\epsilon \rightarrow 0$ and since $0 \notin \text{supp}(\Psi_{j,\epsilon})$ for $\epsilon$ being sufficiently small, we get 
\begin{equation} \label{third integral}
    \lim\limits_{\epsilon \rightarrow 0} \int_{\mathbb{R}^{N}} \Psi_{j,\epsilon}\, \mathrm{d}\rho = \rho_{j}.
\end{equation}
Further,
\begin{equation} \label{second integral}
    \lim\limits_{\epsilon \rightarrow 0}  \int_{\mathbb{R}^{N}} \Psi_{j,\epsilon}\, \mathrm{d}\gamma = \lim\limits_{\epsilon \rightarrow 0} \bigg( \int_{\mathbb{R}^{N}} \Psi_{j,\epsilon} \frac{\Tilde{u}^{2}}{|x|^{2s_{1}}}\, \mathrm{d}x + \gamma_{0}\Psi_{j,\epsilon}(0) \bigg) = 0.
\end{equation}
Next, we claim that:
\begin{equation} \label{fourth integral}
   \lim\limits_{\epsilon \rightarrow 0} \lim_{n \rightarrow  + \infty} \iint_{\mathbb{R}^{2N}} \frac{(u_{n}(x)-u_{n}(y))(\Psi_{j,\epsilon}(x)-\Psi_{j,\epsilon}(y))}{|x-y|^{N+2s_{1}}} u_{n}(y) \,\mathrm{d}x \mathrm{d}y = 0.
\end{equation}
Let
\begin{align*}
    \mathcal{I} &= \iint_{\mathbb{R}^{2N}} \frac{(u_{n}(x)-u_{n}(y))(\Psi_{j,\epsilon}(x)-\Psi_{j,\epsilon}(y))}{|x-y|^{N+2s_{1}}} u_{n}(y) \,\mathrm{d}x \mathrm{d}y \\ &=  \iint_{\mathbb{R}^{2N}} \frac{(u_{n}(x)-u_{n}(y))[(\Psi_{j,\epsilon}(x)-\Psi_{j,\epsilon}(y)) u_{n}(y)]}{|x-y|^{N+2s_{1}}} \,\mathrm{d}x \mathrm{d}y.
\end{align*}
Since $\{u_n \Psi_{j,\epsilon}\}_{n \in \mathbb{N}}$ is a bounded sequence in $\mathcal{D}^{s_1,2}(\mathbb{R}^N)$, by using the H\"{o}lder's inequality we obtain
\begin{align*}
    \mathcal{I} &\leq \bigg( \iint_{\mathbb{R}^{2N}} \frac{|u_{n}(x)-u_{n}(y)|^2}{|x-y|^{N+2s_{1}}} \,\mathrm{d}x \mathrm{d}y \bigg)^{\frac{1}{2}} \bigg( \iint_{\mathbb{R}^{2N}} \frac{|\Psi_{j,\epsilon}(x)-\Psi_{j,\epsilon}(y)|^2|u_{n}(y)|^2}{|x-y|^{N+2s_{1}}} \,\mathrm{d}x \mathrm{d}y \bigg)^{\frac{1}{2}} \\
    &\leq C \bigg( \iint_{\mathbb{R}^{2N}} \frac{|\Psi_{j,\epsilon}(x)-\Psi_{j,\epsilon}(y)|^2|u_{n}(y)|^2}{|x-y|^{N+2s_{1}}} \,\mathrm{d}x \mathrm{d}y \bigg)^{\frac{1}{2}}.
\end{align*}
By Lemma 2.2, Lemma 2.4 of Bonder et al. \cite{Bonder2018}, and Lemma 2.3 of Xiang et al. \cite{Xiang2016}, we obtain that
\[ \lim\limits_{\epsilon \rightarrow 0} \lim_{n \rightarrow  + \infty} \iint_{\mathbb{R}^{2N}} \frac{|\Psi_{j,\epsilon}(x)-\Psi_{j,\epsilon}(y)|^2|u_{n}(y)|^2}{|x-y|^{N+2s_{1}}} \,\mathrm{d}x \mathrm{d}y =0 ,\]
which implies that $ \lim\limits_{\epsilon \rightarrow 0} \lim\limits_{n \rightarrow  + \infty} \mathcal{I} = 0$. Hence, the claim \eqref{fourth integral} is done. Now we show that
\begin{equation} \label{Fifth integral}
    \lim\limits_{\epsilon \rightarrow 0} \lim_{n \rightarrow 
       + \infty}  \int_{\mathbb{R}^{N}} h(x)|u_{n}|^{\alpha}|v_{n}|^{\beta}\Psi_{j,\epsilon}(x)\, \mathrm{d}x=0.
\end{equation}
We notice that $\alpha + \beta < \min\{2_{s_1}^*, 2_{s_2}^* \}$ implies that $\frac{\alpha}{2_{s_1}^*}+ \frac{\beta}{2_{s_2}^*}<1$. Therefore, by applying the H\"{o}lder's inequality, we have
\begin{align*}
\int_{\mathbb{R}^{N}}h(x)|u_{n}|^{\alpha}|v_{n}|^{\beta}\Psi_{j,\epsilon}(x)\,\mathrm{d}x &= \int_{\mathbb{R}^{N}}(h(x)\Psi_{j,\epsilon}(x))^{1-\frac{\alpha}{2_{s_{1}}^{*}} - \frac{\beta}{2_{s_{2}}^{*}}} (h(x) \Psi_{j,\epsilon}(x))^{\frac{\alpha}{2_{s_{1}}^{*}} + \frac{\beta}{2_{s_{2}}^{*}} }|u_{n}|^{\alpha}|v_{n}|^{\beta}\,\mathrm{d}x\\
      &\leq \bigg( \int_{\mathbb{R}^{N}}h(x)\Psi_{j,\epsilon}(x)\,\mathrm{d}x\bigg)^{1-\frac{\alpha}{2_{s_{1}}^{*}} - \frac{\beta}{2_{s_{2}}^{*}}}  \bigg(\int_{\mathbb{R}^{N}}h(x)|u_{n}|^{2_{s_{1}}^{*}}\Psi_{j,\epsilon}(x)\,\mathrm{d}x\bigg)^{\frac{\alpha}{2_{s_{1}}^{*}}}\\ 
      &~~~~~~~~~ \bigg(\int_{\mathbb{R}^{N}}h(x)|v_{n}|^{2_{s_{2}}^{*}}\Psi_{j,\epsilon}(x)\,\mathrm{d}x\bigg)^{\frac{\beta}{2_{s_{2}}^{*}}}.
  \end{align*}
The first integral on the RHS of the last inequality tends to $0$ as $\epsilon \rightarrow 0$, and the rest two integrals are bounded as the limit $n \rightarrow \infty$. Therefore, first taking the limit as $n \rightarrow \infty$ and then taking $\epsilon \rightarrow 0$, we get \eqref{Fifth integral}. 
Now from (\ref{functional with first component}--\ref{Fifth integral}), one leads to the conclusion that $\mu_{j} - \rho_{j} \leq 0$ as $\epsilon \rightarrow 0$. Then using (\ref{inequality with S}), we have
\begin{align} \label{rho concentartion}
    \text{either} ~ \rho_{j} = 0,~~\text{or} ~~ \rho_{j} \geq {S_1}^{\frac{N}{2s_{1}}},~~\text{for all} ~j\in \mathcal{J}~\text{and}~\mathcal{J}~\text{is~finite}.
\end{align}
Analogously, we can also conclude that 
\begin{align} \label{rho bar concentartion}
    \text{either} ~ \Bar{\rho}_{k} = 0,~~\text{or} ~~ \Bar{\rho}_{k} \geq {S_2}^{\frac{N}{2s_{2}}},~~\text{for all}~ k\in \mathcal{K}~\text{and}~\mathcal{K}~\text{is~finite}.
\end{align}
Now for studying the concentration at origin, we consider a cut-off function $\Psi_{0,\epsilon}$ which satisfies the assumption
(\ref{test function phi at j}). Again, testing $J'_{\nu}(u_{n},v_{n})$ with $(u_{n}\Psi_{0,\epsilon}, 0)$ and following the analogous approach, we can easily deduce that $\mu_{0} - \lambda_{1}\gamma_{0}-\rho_{0} \leq 0$ and $\Bar{\mu}_{0} - \lambda_{1}\Bar{\gamma}_{0}-\Bar{\rho}_{0} \leq 0$. Moreover, using the inequality (1.7) of \cite[Theorem 1.2]{Pucci2021}, we have
\begin{align} \label{functional with first component at origin}
     \mu_{0} - \lambda_{1}\gamma_{0} \geq S(\lambda_{1}) \rho_{0}^{\frac{2}{2_{s_{1}}^{*}}} ~~\text{and}~~
     \Bar{\mu}_{0} - \lambda_{2}\Bar{\gamma}_{0} \geq S(\lambda_{2})\Bar{\rho}_{0}^{\frac{2}{2_{s_{2}}^{*}}},
\end{align}
which further implies that
\begin{align} \label{rho node inequalities}
    \begin{split}
     \text{either}~~ \rho_{0} = 0 ~~\text{or}~~\rho_{0} \geq S^{\frac{N}{2s_{1}}}(\lambda_{1}),\\
     \text{either}~~\Bar{\rho}_{0} = 0 ~~\text{or}~~\Bar{\rho}_{0} \geq S^{\frac{N}{2s_{2}}}(\lambda_{2}).
    \end{split}
\end{align}
Next for concentration at the point $\infty$, we choose $\mathcal{R}>0$ large enough so that $\{x_{j}\}_{j \in \mathcal{J}} \cup \{0\}$ is contained in $ B_{\mathcal{R}}(0)$ and we  consider a cut-off function $\Psi_{\infty,\epsilon}$ supported in a neighbourhood of $\infty$ satisfying the following
\begin{align}\label{phi at infinity}
 \Psi_{\infty,\epsilon} = 0 ~~\text{in}~~B_{\mathcal{R}}(0),~~ \Psi_{\infty,\epsilon} = 1 ~~\text{in}~~B^{c}_{\mathcal{R}+1}(0),~0\leq \Psi_{\infty,\epsilon} \leq 1 ~\text{and}~|\nabla  \Psi_{\infty,\epsilon}| \leq \frac{4}{\epsilon}.
\end{align}
 Analogously, it is easy to find that $\mu_{\infty} - \lambda_{1}\gamma_{\infty}-\rho_{\infty} \leq 0$ and $\Bar{\mu}_{\infty} - \lambda_{1}\Bar{\gamma}_{\infty}-\Bar{\rho}_{\infty} \leq 0$ by testing $J'_{\nu}(u_{n},v_{n})$ with $(u_{n}\Psi_{\infty,\epsilon}, 0)$. Next, by the inequality (1.14) of \cite[Theorem 1.3]{Pucci2021} we have
\begin{align} \label{functional with first component at infty}
     \mu_{\infty} - \lambda_{1}\gamma_{\infty} \geq S(\lambda_{1}) \rho_{\infty}^{\frac{2}{2_{s_{1}}^{*}}} ~~\text{and}~~
     \Bar{\mu}_{\infty} - \lambda_{2}\Bar{\gamma}_{\infty} \geq S(\lambda_{2})\Bar{\rho}_{\infty}^{\frac{2}{2_{s_{2}}^{*}}},
\end{align}
which also further concludes that
\begin{align} \label{rho infty inequalities}
    \begin{split}
     \text{either}~ \rho_{\infty} = 0 ~~\text{or}~~\rho_{\infty} \geq S^{\frac{N}{2s_{1}}}(\lambda_{1}),\\
    \text{either}~ \Bar{\rho}_{\infty} = 0 ~~\text{or}~~\Bar{\rho}_{\infty} \geq S^{\frac{N}{2s_{2}}}(\lambda_{2}).
    \end{split}
\end{align}
As we already know that
\begin{align} \label{value of c}
     c = \bigg(\frac{1}{2}  -\frac{1}{\alpha+\beta} \bigg) \|(u_{n},v_{n})\|_{\mathbb{D}}^{2}+ \bigg(\frac{1}{\alpha+\beta} - \frac{1}{2_{s_{1}}^{*}} \bigg) \|u_{n}\|_{{2_{s_{1}}^{*}}}^{2_{s_{1}}^{*}}+\bigg(\frac{1}{\alpha+\beta} - \frac{1}{2_{s_{2}}^{*}} \bigg) \|v_{n}\|_{{2_{s_{2}}^{*}}}^{2_{s_{2}}^{*}} +o(1).
\end{align}
Now using (\ref{Concentration compactness}-\ref{inequality with lambda}), (\ref{rho node inequalities}) and (\ref{rho infty inequalities}), we deduce that
\begin{align}\label{ inequalities with c}
      c &\geq \bigg(\frac{1}{2}  -\frac{1}{\alpha+\beta} \bigg)\bigg(\|(\Tilde{u},\Tilde{v})\|^{2}_{\mathbb{D}} + \sum\limits_{j\in \mathcal{J}}\mu_{j} + (\mu_{0} - \lambda_{1}\gamma_{0}) + (\mu_{\infty} - \lambda_{1}\gamma_{\infty})\notag\\
      &\quad+ \sum\limits_{k \in \mathcal{K}} \Bar{\mu}_{k} + 
      ( \Bar{\mu}_{0} - \lambda_{2}\Bar{\gamma}_{0}) + (\Bar{\mu}_{\infty} - \lambda_{2}\Bar{\gamma}_{\infty}) \bigg)\notag\\
      &\quad+ \bigg( \frac{1}{\alpha+\beta}-\frac{1}{2_{s_{1}}^{*}} \bigg) \bigg(\int_{\mathbb{R}^{N}}|\Tilde{u}|^{2_{s_{1}}^{*}}\,\mathrm{d}x + \sum\limits_{j\in \mathcal{J}}\rho_{j} +\rho_{0} +\rho_{\infty}  \bigg) \notag\\
      &\quad+ \bigg( \frac{1}{\alpha+\beta}-\frac{1}{2_{s_{2}}^{*}} \bigg) \bigg(\int_{\mathbb{R}^{N}}|\Tilde{v}|^{2_{s_{2}}^{*}}\,\mathrm{d}x  +  \sum\limits_{k \in \mathcal{K}} \Bar{\rho}_{k} + \Bar{\rho}_{0} + \Bar{\rho}_{\infty}\bigg) \\
     &\geq \bigg(\frac{1}{2}  -\frac{1}{\alpha+\beta} \bigg)\bigg(  \bigg[ {S_1}\sum\limits_{j\in \mathcal{J}}\rho_{j}^{\frac{2}{2_{s_{1}}^{*}}} + {S_2} \sum\limits_{k \in \mathcal{K}} \Bar{\rho}_{k}^{\frac{2}{2_{s_{2}}^{*}}}\bigg] + S(\lambda_{1}) \bigg[ \rho_{0}^{\frac{2}{2_{s_{1}}^{*}}}
    +{\rho}_{\infty}^{\frac{2}{2_{s_{2}}^{*}}}\bigg] + S(\lambda_{2}) \bigg[ \Bar{\rho}_{0}^{\frac{2}{2_{s_{1}}^{*}}} +\Bar{\rho}_{\infty}^{\frac{2}{2_{s_{2}}^{*}}}\bigg] \bigg) \notag\\
      &\quad+ \bigg( \frac{1}{\alpha+\beta}-\frac{1}{2_{s_{1}}^{*}} \bigg) \bigg( \sum\limits_{j\in \mathcal{J}}\rho_{j} +\rho_{0} +\rho_{\infty}\bigg) + \bigg( \frac{1}{\alpha+\beta}-\frac{1}{2_{s_{2}}^{*}} \bigg) \bigg(\sum\limits_{k \in \mathcal{K}} \Bar{\rho}_{k} + \Bar{\rho}_{0} + \Bar{\rho}_{\infty} \bigg). \notag
\end{align}
If the concentration is considered at the point $x_j$, then $\rho_{j}>0$ and further form above and using (\ref{rho concentartion}) we find that
\begin{align*}
    c \geq \bigg(\frac{1}{2}  -\frac{1}{\alpha+\beta} \bigg) {S_1}^{1+ \frac{N}{2s_{1}} \frac{2}{2_{s_{1}}^{*}}} + \bigg( \frac{1}{\alpha+\beta}-\frac{1}{2_{s_{1}}^{*}} \bigg) {S_1}^{\frac{N}{2s_{1}}} = \frac{s_{1}}{N}{S_1}^{\frac{N}{2s_{1}}}.
\end{align*}
Which contradicts the assumption on energy level given by (\ref{energy level PS}). This leads to $\rho_{j} = \mu_{j} = 0$ for all $j \in \mathcal{J}$. Following the similar way, we further deduce that $\Bar{\rho}_{k} = \Bar{\mu}_{k} = 0$ for all $k \in \mathcal{K}$. If $\rho_{0} \neq 0$, from \eqref{ inequalities with c} and (\ref{rho node inequalities}), we have
\begin{align*}
    c \geq \frac{s_{1}}{N}S^{\frac{N}{2s_{1}}}(\lambda_{1}),
\end{align*}
which again contradicts the hypothesis on the energy level c. Therefore, $\rho_{0} = 0$. By the same token, we also get $\Bar{\rho}_{0} = 0$. Arguing as above and using (\ref{rho infty inequalities}) we also find $\rho_{\infty} = 0$ and $\Bar{\rho}_{\infty} = 0$. Hence, there exists a subsequence that strongly converges in $L^{2^{*}_{s_{1}}}(\mathbb{R}^{N}) \times L^{2^{*}_{s_{2}}}(\mathbb{R}^{N})$. As a consequence, we have 
\begin{align*}
    \|(u_{n} - \Tilde{u}, v_{n} - \Tilde{v})\|^{2}_{\mathbb{D}} = \langle J'_{\nu}(u_{n},v_{n})| (u_{n} - \Tilde{u}, v_{n} - \Tilde{v}) \rangle + o_n(1),
\end{align*}
which infers that the sequence $\{ (u_{n},v_{n})\}$ strongly converges in $\mathbb{D}$ and the (PS) condition holds.
\end{proof}
Now we consider the modified version of the problem \eqref{main problem} to deal with the positive solutions.
\begin{equation} \label{modified problem}
    \left\{
        \begin{array}{ll}
            (-\Delta)^{s_{1}} u - \lambda_{1} \frac{u}{|x|^{2s_{1}}} - (u^{+})^{2_{s_{1}}^{*}-1} = \nu \alpha h(x) (u^{+})^{\alpha-1}(v^{+})^{\beta} & \quad \text{in} ~ \mathbb{R}^{N}, \\
            (-\Delta)^{s_{2}} v - \lambda_{2} \frac{v}{|x|^{2s_{2}}} - (v^{+})^{2_{s_{2}}^{*}-1} = \nu \beta h(x) (u^{+})^{\alpha}(v^{+})^{\beta-1} & \quad \text{in} ~ \mathbb{R}^{N},
        \end{array}
    \right.
\end{equation}
where $u^{+} = \max\{u,0\}$ and $u^{-} = \min\{u,0\}$ such that $u = u^{+} + u^{-}.$ It is clear that the solutions of (\ref{main problem}) are also the solutions of the modified problem (\ref{modified problem}). The modified problem also has a structure of the variational type, and therefore, the solutions are merely the critical points of the associated functional on $\mathbb{D}$ given by
\begin{align} \label{functional associated with modified problem}
   J^{+}_{\nu}(u_{},v_{}) = \frac{1}{2} \|(u_{},v_{})\|_{\mathbb{D}}^{2} - \frac{1}{2_{s_{1}}^{*}}\|u^{+}\|_{{2_{s_{1}}^{*}}}^{2_{s_{1}}^{*}} - \frac{1}{2_{s_{2}}^{*}}\|v^{+}\|_{{2_{s_{2}}^{*}}}^{2_{s_{2}}^{*}} - \nu \int_{\mathbb{R}^{N}} h(x)(u^{+})^{\alpha}(v^{+})^{\beta}\,\mathrm{d}x. 
\end{align}
{ From the above we have
\begin{align*}
    |J^{+}_{\nu}(u,v)| &\leq \frac{1}{2} \|(u,v)\|_{\mathbb{D}}^2 + \frac{1}{2_{s_{1}}^{*}} \|u^+\|_{2_{s_{1}}^{*}}^{2_{s_{1}}^{*}}  + \frac{1}{2_{s_{2}}^{*}}\|v^+\|_{2_{s_{2}}^{*}}^{2_{s_{2}}^{*}} + \nu \int_{\mathbb{R}^{N}}h(x)(u^+)^{\alpha}(v^+)^{\beta}\, \mathrm{d}x \\
    & \leq \frac{1}{2} \|(u,v)\|_{\mathbb{D}}^2 + \frac{1}{2_{s_{1}}^{*}} \|u\|_{2_{s_{1}}^{*}}^{2_{s_{1}}^{*}}  + \frac{1}{2_{s_{2}}^{*}}\|v\|_{2_{s_{2}}^{*}}^{2_{s_{2}}^{*}} + \nu \int_{\mathbb{R}^{N}}h(x)|u|^{\alpha}|v|^{\beta}\, \mathrm{d}x.
\end{align*} 
By applying H\"{o}lder's inequality, it follows that
\begin{align} 
\displaystyle |J^+_{\nu}(u,v)|\leq
\begin{cases}
  \frac{1}{2} \|(u,v)\|_{\mathbb{D}}^2 + \frac{1}{2_{s_{1}}^{*}} \|u\|_{2_{s_{1}}^{*}}^{2_{s_{1}}^{*}}  +\frac{1}{2_{s_{2}}^{*}}\|v\|_{2_{s_{2}}^{*}}^{2_{s_{2}}^{*}} + \nu \|h\|_{1}^{1-\frac{\alpha}{2_{s_1}^{*}} - \frac{\beta}{2_{s_2}^{*}}} \|h\|_{\infty}^{\frac{\alpha}{2_{s_1}^{*}} + \frac{\beta}{2_{s_2}^{*}}} \|u\|_{2_{s_1}^{*}}^{\alpha} \|v\|_{2_{s_2}^{*}}^{\beta},\quad \text{if} \frac{\alpha}{2_{s_1}^{*}} + \frac{\beta}{2_{s_2}^{*}} <1,\\
         \frac{1}{2} \|(u,v)\|_{\mathbb{D}}^2 + \frac{1}{2_{s_{1}}^{*}} \|u\|_{2_{s_{1}}^{*}}^{2_{s_{1}}^{*}}  +\frac{1}{2_{s_{2}}^{*}}\|v\|_{2_{s_{2}}^{*}}^{2_{s_{2}}^{*}} + \nu \|h\|_{\infty}  \|u\|_{2_{s_1}^{*}}^{\alpha} \|v\|_{2_{s_2}^{*}}^{\beta}, \quad \text{if} \frac{\alpha}{2_{s_1}^{*}} + \frac{\beta}{2_{s_2}^{*}} =1.
\end{cases}
\end{align} 
In both the cases the right hand side is finite for every $(u,v) \in \mathbb{D}$ due to the the Sobolev embeddings given by \eqref{embeddings} and thanks to \eqref{condition on h}. Hence, the functional $J^+_\nu$ is well-defined on the product space $\mathbb{D}$. It is easy to verify that the functional $J^{+}_\nu$ is Fr\'{e}chet differentiable on $\mathbb{D}$. The functional is given by
\begin{align*}
    J^{+}_{\nu}(u,v) &= \frac{1}{2} A(u,v) - B(u^+,v^+)-\nu I(u^+,v^+),
\end{align*} 
where $ A(u,v) = \|(u,v)\|_{\mathbb{D}}^2,~ B(u^+,v^+)= \frac{1}{2_{s_{1}}^{*}} \|u^+\|_{2_{s_{1}}^{*}}^{2_{s_{1}}^{*}}  +\frac{1}{2_{s_{2}}^{*}}\|v^+\|_{2_{s_{2}}^{*}}^{2_{s_{2}}^{*}} ,~ I(u^+,v^+)= \int_{\mathbb{R}^{N}}h(x)(u^+)^{\alpha}(v^+)^{\beta}\, \mathrm{d}x$. If $A,B,I \in C^{1}$ on the product space $\mathbb{D}$, then the functional $J_\nu$ is also in $C^1$ on $\mathbb{D}$. It is clear that $A \in C^1$ as it is the square of a norm on $\mathbb{D}$. By following a similar approach as given in \cite[Lemma 1]{Baldelli2021}, we can prove that the functionals $B$ and $I$ are in $C^1$ on the product space $\mathbb{D}$. \\
 For  $(u_0,v_0) \in \mathbb{D}$, the Fr\'{e}chet derivative of $J^{+}_\nu$ at $(u,v) \in \mathbb{D}$ is given as follow 
\begin{align*}
     \langle (J^{+}_{\nu})'(u,v) | (u_0,v_0)\rangle &= \iint_{\mathbb{R}^{2N}} \frac{(u(x)-u(y))(u_0(x)-u_0(y))}{|x-y|^{N+2s_{1}}} \,\mathrm{d}x \mathrm{d}y  \\ &\quad+ \iint_{\mathbb{R}^{2N}} \frac{(v(x)-v(y))(v_0(x)-v_0(y))}{|x-y|^{N+2s_{2}}} \,\mathrm{d}x \mathrm{d}y  - \lambda_1 \int_{\mathbb{R}^{N}} \frac{u\cdot u_0}{|x|^{2s_{1}}}\,\mathrm{d}x - \lambda_2 \int_{\mathbb{R}^{N}} \frac{v \cdot v_0}{|x|^{2s_2}}\,\mathrm{d}x  \\&\quad- \int_{\mathbb{R}^{N}} (u^+)^{{2^{*}_{s_{1}}}-1}  u_0 \,\mathrm{d}x
        - \int_{\mathbb{R}^{N}} (v^{+})^{{2^{*}_{s_{2}}}-1}  v_0 \,\mathrm{d}x\\
       &~~~~~ - \nu \alpha \int_{\mathbb{R}^{N}}h(x)(u^+)^{\alpha -1}  u_0(v^+)^{\beta}\,\mathrm{d}x
        - \nu \beta \int_{\mathbb{R}^{N}}h(x)(u^+)^{\alpha}(v^+)^{\beta-1}  v_0 \,\mathrm{d}x.
\end{align*}}

 Besides, we shall denote $\mathcal{N}_{\nu}^{+}$ the Nehari manifold associated to $J_{\nu}^{+}$ as
$$\mathcal{N}^{+}_{\nu} = \{  (u, v) \in \mathbb{D} \backslash \{(0, 0)\} : \langle (J_{\nu}^{+})'(u,v) | (u,v) \rangle = 0    \}.$$
Also for every $(u,v) \in \mathcal{N}^{+}_{\nu}$, we have the following
\begin{align} \label{equivalent norm for modified problem}
    \|(u,v)\|_{\mathbb{D}}^{2} = \|u^{+}\|_{{2_{s_{1}}^{*}}}^{2_{s_{1}}^{*}} + \|v^{+}\|_{{2_{s_{2}}^{*}}}^{2_{s_{2}}^{*}} + \nu (\alpha + \beta) \int_{\mathbb{R}^{N}} h(x)(u^{+})^{\alpha}(v^{+})^{\beta}\,\mathrm{d}x,
\end{align}
and using this we can write the functional $J_{\nu}^{+}$ restricted on the Nehari manifold $\mathcal{N}^{+}_{\nu}$ as 
\begin{align} \label{restricted functonal on nehari manifold of modified problem}
    J_{\nu}^{+}|_{\mathcal{N}^{+}_{\nu}}(u,v) = \bigg(\frac{1}{2} - \frac{1}{\alpha+\beta}\bigg)\|(u,v)\|_{\mathbb{D}}^{2} + \bigg(\frac{1}{\alpha+\beta} - \frac{1}{2_{s_{1}}^{*}}\bigg)\|u^{+}\|_{{2_{s_{1}}^{*}}}^{2_{s_{1}}^{*}} + \bigg(\frac{1}{\alpha+\beta} - \frac{1}{2_{s_{2}}^{*}}\bigg) \|v^{+}\|_{{2_{s_{2}}^{*}}}^{2_{s_{2}}^{*}}.
\end{align}
In the following lemma, we prove strong convergence when certain Palais-Smale level conditions are imposed. 
\begin{lemma} \label{PS compactness lemma second}
 Assume $\alpha+\beta< \min\{2_{s_{1}}^{*},2_{s_{2}}^{*} \}$ and (\ref{condition on h}). Also let $\alpha \geq 2,~ S^{\frac{N}{2s_{1}}}(\lambda_{1}) \geq S^{\frac{N}{2s_{2}}}(\lambda_{2})$ and 
\begin{align} \label{Sum is less than S}
    S^{\frac{N}{2s_{1}}}(\lambda_{1})+S^{\frac{N}{2s_{2}}}(\lambda_{2}) < \min\{{S_1}^{\frac{N}{2s_{1}}},{S_2}^{\frac{N}{2s_{2}}}\}.
\end{align}
Then, there exists $\nu_0>0$ such that, if $0< \nu \leq \nu_0$ and $\{(u_{n},v_{n})\} \subset \mathbb{D}$ is a (PS) sequence for $J_{\nu}^{+}$ at level $c \in \mathbb{R}$ such that 
\begin{align}\label{c is less than sum}
    \frac{s_{1}}{N}S^{\frac{N}{2s_{1}}}(\lambda_{1}) < c < \frac{\min\{s_{1},s_{2}\}}{N}\bigg(S^{\frac{N}{2s_{1}}}(\lambda_{1})+S^{\frac{N}{2s_{2}}}(\lambda_{2})\bigg),
\end{align}
and 
\begin{align} \label{c is not equal modified problem}
    c \neq \frac{s_{2}l}{N}S^{\frac{N}{2s_{2}}}(\lambda_{2})~\text{for every}~l\in \mathbb{N}\backslash \{0\},
\end{align}
then there exists $ (\Tilde{u},\Tilde{v}) \in \mathbb{D}$ such that up to a subsequence $(u_{n},v_{n}) \rightarrow (\Tilde{u},\Tilde{v}) ~in~\mathbb{D} ~\text{as}~n \rightarrow \infty$.
\end{lemma}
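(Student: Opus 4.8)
The plan is to run the concentration--compactness scheme of Lemma \ref{PS compactness lemma} essentially verbatim up to the point where the limiting measures are produced, and then replace the single energy estimate by a finer bookkeeping that uses all three hypotheses \eqref{Sum is less than S}, \eqref{c is less than sum} and \eqref{c is not equal modified problem} to annihilate \emph{every} atom, in both components and at interior points, the origin, and infinity.

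First I would invoke Lemma \ref{boundedness lemma} to get boundedness of the (PS) sequence, pass to a subsequence with $(u_n,v_n)\rightharpoonup(\tilde u,\tilde v)$ weakly in $\mathbb{D}$ and a.e. in $\mathbb{R}^N$, and verify that $(\tilde u,\tilde v)$ is a critical point of $J^+_\nu$, so that $J^+_\nu(\tilde u,\tilde v)\ge 0$ by \eqref{restricted functonal on nehari manifold of modified problem}. Because $\alpha+\beta<\min\{2^*_{s_1},2^*_{s_2}\}$, the coupling functional is weakly continuous, whence $\int_{\mathbb{R}^N}h(u_n^+)^\alpha(v_n^+)^\beta\to\int_{\mathbb{R}^N}h(\tilde u^+)^\alpha(\tilde v^+)^\beta$; here the hypothesis $\alpha\ge 2$ is what lets the coupling term and its derivative split cleanly in the Brezis--Lieb sense. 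Applying the concentration--compactness principle exactly as before yields the measures in \eqref{Concentration compactness} together with the dichotomies \eqref{rho concentartion}, \eqref{rho bar concentartion}, \eqref{rho node inequalities} and \eqref{rho infty inequalities}, since the localisation computations \eqref{first integral}--\eqref{Fifth integral} are unaffected by the now-harmless coupling integral.

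The decisive step is the energy accounting. Write $A=\frac{s_1}{N}S^{\frac{N}{2s_1}}(\lambda_1)$ and $B=\frac{s_2}{N}S^{\frac{N}{2s_2}}(\lambda_2)$ for the origin/infinity thresholds, so that each atom of $\rho$ costs at least $A$, each atom of $\bar\rho$ at least $B$, and each interior atom costs at least $\frac{s_i}{N}S_i^{\frac{N}{2s_i}}$. I would first use \eqref{Sum is less than S} to kill all interior atoms: a single one forces $c\ge\frac{\min\{s_1,s_2\}}{N}\min\{S_1^{\frac{N}{2s_1}},S_2^{\frac{N}{2s_2}}\}$, which by \eqref{Sum is less than S} strictly exceeds the upper bound in \eqref{c is less than sum}, a contradiction. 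Since that upper bound is $\le A+B$, one cannot have an atom of $\rho$ and an atom of $\bar\rho$ at once, so all surviving concentration sits in a single component, leaving only a ``pure $u$'' and a ``pure $v$'' scenario with energy identity $c=J^+_\nu(\tilde u,\tilde v)+aA+bB$. Two $u$-bubbles are excluded because the ordering $S^{\frac{N}{2s_1}}(\lambda_1)\ge S^{\frac{N}{2s_2}}(\lambda_2)$ gives $U\le 2A$, contradicting $c\ge 2A$; a single $u$-bubble must then be reconciled with the strict lower bound $c>A$ (the residual limit decouples and its energy is pinned down by the ground-state level $A$ of the $u$-equation). In the pure-$v$ scenario the $u$-component converges strongly, the limit decouples, and the total energy collapses onto a quantised level $lB=\frac{s_2 l}{N}S^{\frac{N}{2s_2}}(\lambda_2)$, forbidden by \eqref{c is not equal modified problem}. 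I expect this case analysis --- controlling the interplay of the two distinct exponents $s_1,s_2$ through the factor $\min\{s_1,s_2\}$, and in particular excluding the single $u$-bubble with a nontrivial coupled weak limit --- to be the main obstacle.

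Once every atom (interior, origin, and infinity, in both components) is shown to vanish, the critical measures have no singular part, so $u_n\to\tilde u$ in $L^{2^*_{s_1}}(\mathbb{R}^N)$ and $v_n\to\tilde v$ in $L^{2^*_{s_2}}(\mathbb{R}^N)$. Substituting this into $\langle (J^+_\nu)'(u_n,v_n)\,|\,(u_n-\tilde u,v_n-\tilde v)\rangle=o(1)$ and using \eqref{equivalent norm for modified problem} yields $\|(u_n-\tilde u,v_n-\tilde v)\|_{\mathbb{D}}\to 0$, exactly as at the end of Lemma \ref{PS compactness lemma}, which gives the claimed strong convergence.
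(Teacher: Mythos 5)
Your proposal reproduces the parts of the argument that are genuinely routine: boundedness via Lemma \ref{boundedness lemma}, the concentration--compactness decomposition, killing all interior atoms via \eqref{Sum is less than S} (your direct energy bound $c\ge \tfrac{s_i}{N}S_i^{N/2s_i}$ is in fact a little cleaner than the paper's use of quantization there), excluding simultaneous concentration in both components because $c\ge A+B$ would exceed the upper bound in \eqref{c is less than sum}, and excluding two bubbles in the same component. All of that matches the paper. But there is a genuine gap exactly at the point you yourself flag as ``the main obstacle'': a \emph{single} bubble (at $0$ or $\infty$) coexisting with a weak limit $(\tilde u,\tilde v)$ that is nontrivial in \emph{both} components. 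Your two dismissals of this case are false: the residual limit does \emph{not} decouple (it solves the coupled system, since $h\not\equiv 0$ and $\tilde u^\alpha\tilde v^\beta\not\equiv 0$), and the total energy does \emph{not} collapse onto a quantized level $lB$ --- the identity is $c=J_\nu(\tilde u,\tilde v)+\tfrac{s_i}{N}\rho$, where $J_\nu(\tilde u,\tilde v)$ is the energy of a genuine coupled critical point, which is not pinned down by any bubbling theorem. Quantization (Pisante et al.\ for interior bubbles, Bhakta--Mukherjee at $0/\infty$) is only available when the limit is zero or semi-trivial, which is precisely why \eqref{c is not equal modified problem} alone cannot finish the proof.

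The missing mechanism is the one the hypotheses $\alpha\ge 2$, $S^{N/2s_1}(\lambda_1)\ge S^{N/2s_2}(\lambda_2)$ and $0<\nu\le\nu_0$ exist for, and your proof never uses them (you misattribute $\alpha\ge2$ to a Brezis--Lieb splitting of the coupling term, which needs no such restriction, and you never produce the threshold $\nu_0$ that appears in the statement). The paper handles the coupled-limit-plus-one-bubble case as follows: from the energy identity and \eqref{c is less than sum} one gets $J_\nu(\tilde u,\tilde v)<\tfrac{s_2}{N}S^{N/2s_1}(\lambda_1)$ (resp.\ $<\tfrac{s_2}{N}S^{N/2s_2}(\lambda_2)$ in the other case); then, testing the first equation of the system with the definition of $S(\lambda_1)$ and H\"older, the $u$-mass $\sigma_1=\int_{\mathbb{R}^N}\tilde u^{2^*_{s_1}}\,\mathrm{d}x$ satisfies $\sigma_1+C_1\nu\,\sigma_1^{\frac{\alpha}{2}\frac{N-2s_1}{N}}\ge S(\lambda_1)\sigma_1^{\frac{N-2s_1}{N}}$, and Lemma \ref{Abdelloui fractional version} (which is where $\alpha\ge2$ and the smallness of $\nu$ enter) forces $\sigma_1\ge(1-\epsilon)S^{N/2s_1}(\lambda_1)$, hence $J_\nu(\tilde u,\tilde v)\ge\tfrac{s_2}{N}S^{N/2s_1}(\lambda_1)$, a contradiction; in the symmetric case the paper instead invokes Theorem \ref{third theorem} (the semi-trivial pair is the ground state for small $\nu$, itself a consequence of Lemma \ref{Abdelloui fractional version}) to contradict $\inf_{\mathcal{N}_\nu}J_\nu<\tfrac{s_2}{N}S^{N/2s_2}(\lambda_2)$. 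Without this step your argument cannot close, so as written the proof is incomplete.
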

\begin{proof}
Without loss of generality suppose that $s_{1} \geq s_{2}$. Following Lemma \ref{boundedness lemma} it is easy to prove that the (PS) sequence for $J_{\nu}^{+}$ is bounded in $\mathbb{D}$. Thus, there exist a $(\Tilde{u},\Tilde{v}) \in \mathbb{D}$ and a subsequence $\{(u_{n},v_{n})\}$ such that $\{(u_{n},v_{n})\}$ converges weakly to $(\Tilde{u},\Tilde{v})$ in $\mathbb{D}$. Further
\begin{align*}
    \big\langle (J_{\nu}^{+})'(u_{n},v_{n}) | (u^{-}_{n},0) \big\langle = \iint_{\mathbb{R}^{2N}} \frac{(u_{n}(x)-u_{n}(y))(u_{n}^{-}(x)-u_{n}^{-}(y))}{|x-y|^{N+2s_{1}}}\, \mathrm{d}x \mathrm{d}y - \lambda_{1} \int_{\mathbb{R}^{N}}\frac{u_{n}u_{n}^{-}}{|x|^{2s_{1}}}\, \mathrm{d}x.
\end{align*}
Since $u = u^{+}+u^{-}$ and we know that $u^{+}u^{-} = 0$, as both $u^{+}$ and $u^{-}$ can not be positive simultaneously. Hence,
\begin{align*}
     \big\langle (J_{\nu}^{+})'(u_{n},v_{n}) | (u^{-}_{n},0) \big\rangle &= \iint_{\mathbb{R}^{2N}} \frac{(u_{n}^{-}(x)-u_{n}^{-}(y))^{2}}{|x-y|^{N+2s_{1}}}\, \mathrm{d}x \mathrm{d}y + \iint_{\mathbb{R}^{2N}} \frac{\big(-u_{n}^{+}(x)u_{n}^{-}(y)-u_{n}^{+}(y) u_{n}^{-}(x)\big)}{|x-y|^{N+2s_{1}}}\, \mathrm{d}x \mathrm{d}y\\
     &~~~~~~~~~~~~~~~~~~~~~~ - \lambda_{1} \int_{\mathbb{R}^{N}}\frac{(u_{n}^{-})^{2}}{|x|^{2s_{1}}}\,\mathrm{d}x\\
     &\geq \iint_{\mathbb{R}^{2N}} \frac{(u_{n}^{-}(x)-u_{n}^{-}(y))^{2}}{|x-y|^{N+2s_{1}}}\, \mathrm{d}x \mathrm{d}y - \lambda_{1} \int_{\mathbb{R}^{N}}\frac{(u_{n}^{-})^{2}}{|x|^{2s_{1}}}\,\mathrm{d}x.
\end{align*}
From Hardy's inequality \eqref{fractional Hardy inequality}, we have 
\begin{align*}
      (1-\lambda_{1}C) \iint_{\mathbb{R}^{2N}} \frac{(u_{n}^{-}(x)-u_{n}^{-}(y))^{2}}{|x-y|^{N+2s_{1}}}\, \mathrm{d}x \mathrm{d}y \leq \iint_{\mathbb{R}^{2N}} \frac{(u_{n}^{-}(x)-u_{n}^{-}(y))^{2}}{|x-y|^{N+2s_{1}}}\, \mathrm{d}x \mathrm{d}y - \lambda_{1} \int_{\mathbb{R}^{N}}\frac{(u_{n}^{-})^{2}}{|x|^{2s_{1}}}\,\mathrm{d}x,
\end{align*}
where $C = 1/\Lambda_{N,s_{1}}$. Now using the fact that $(J_{\nu}^{+})'(u_{n},v_{n}) \rightarrow 0~\text{in}~\mathbb{D}^*$, we have $\big\langle (J_{\nu}^{+})'(u_{n},v_{n}) | (u^{-}_{n},0) \big\rangle \rightarrow 0 ~\text{as}~n \rightarrow \infty$, and hence we conclude from above inequalities that the sequence $\langle u^-_n \rangle \rightarrow 0$ strongly in $\mathcal{D}^{s_1,2}(\mathbb{R}^{N})$. Proceeding in a similar way, we also have that $\langle v_{n}^{-} \rangle \rightarrow 0 ~\text{strongly in}~\mathcal{D}^{s_2,2}(\mathbb{R}^{N})$.  Therefore, there is no loss for considering $\{(u_{n},v_{n})\}$ as a non-negative (PS) sequence at level $c$ for the functional $J_{\nu}$.\

By using the analogous approach of Lemma \ref{PS compactness lemma}, we can deduce the existence of a subsequence, still denoted by $\{(u_{n},v_{n})\}$, two (at most countable) sets of points $\{x_{j}\}_{j \in \mathcal{J}} \subset \mathbb{R}^{N}$ and $\{y_{k}\}_{k \in \mathcal{K}} \subset \mathbb{R}^{N}$ and non-negative numbers $\{(\mu_{j},\rho_{j})\}_{j \in \mathcal{J}}, \{(\Bar{\mu}_{k},\Bar{\rho}_{k})\}_{k \in \mathcal{K}},$ $\mu_{0},\rho_{0},\gamma_{0},\Bar{\mu}_{0},\Bar{\rho}_{0}$ and $\Bar{\gamma}_{0}$ such that the $weak^*$ convergence given by (\ref{Concentration compactness}) is satisfied as well as the inequalities (\ref{rho concentartion}-\ref{rho node inequalities}) also hold.\\
The concentration at infinity given by the numbers $\mu_{\infty},\rho_{\infty}, \Bar{\mu}_{\infty}$ and $\Bar{\rho}_{\infty}$ as in (\ref{Concentration at infty}), for which (\ref{functional with first component at infty}) and (\ref{rho infty inequalities}) hold, are also defined in a similar way.\

Next, we prove the strong convergence:
\begin{align} \label{component convergence either or case}
   \text{ either}~u_{n} \rightarrow \Tilde{u}~\text{in}~L^{2^{*}_{s_{1}}}(\mathbb{R}^{N})~\text{or}~v_{n} \rightarrow \Tilde{v}~\text{in}~L^{2^{*}_{s_{2}}}(\mathbb{R}^{N}).
\end{align}
The proof follows by using the method of contradiction. So we suppose that both the sequences $\{u_{n}\}$ and $\{v_{n}\}$ do not converge strongly in $L^{2^{*}_{s_{1}}}(\mathbb{R}^{N})$ and $L^{2^{*}_{s_{2}}}(\mathbb{R}^{N})$ respectively. Thus, there exists $j \in \mathcal{J}\cup\{0,\infty\}$ and  $k \in \mathcal{K}\cup\{0,\infty\}$ such that $\rho_{j}>0$ and $\Bar{\rho}_{k}>0$. Using the expressions,(\ref{rho concentartion}--\ref{rho node inequalities}) and (\ref{value of c}) into the equation \eqref{ inequalities with c}, we obtain
\begin{align*} 
     c &= \bigg(\frac{1}{2}  -\frac{1}{\alpha+\beta} \bigg) \|(u_{n},v_{n})\|_{\mathbb{D}}^{2}+ \bigg(\frac{1}{\alpha+\beta} - \frac{1}{2_{s_{1}}^{*}}  \bigg)  \|u_{n}\|_{{2_{s_{1}}^{*}}}^{2_{s_{1}}^{*}}+ \bigg(\frac{1}{\alpha+\beta} - \frac{1}{2_{s_{2}}^{*}}  \bigg)  \|v_{n}\|_{{2_{s_{2}}^{*}}}^{2_{s_{2}}^{*}}  +o(1)\\
     &\geq \bigg(\frac{1}{2}  -\frac{1}{\alpha+\beta} \bigg) \bigg(S(\lambda_{1})\rho_{j}^{\frac{2}{2_{s_{1}}^{*}}} + S(\lambda_{2})\Bar{\rho}_{k}^{\frac{2}{2_{s_{2}}^{*}}}\bigg) + \bigg(\frac{1}{\alpha+\beta} - \frac{1}{2_{s_{1}}^{*}} \bigg)\rho_{j}+ \bigg(\frac{1}{\alpha+\beta} - \frac{1}{2_{s_{2}}^{*}} \bigg)\Bar{\rho}_{k}\\
    & \geq \frac{s_{2}}{N}S^{\frac{N}{2s_{1}}}(\lambda_{1}) + \frac{s_{2}}{N} S^{\frac{N}{2s_{2}}}(\lambda_{2}).
\end{align*}
The aforementioned inequality contradicts assumption (\ref{c is less than sum}), so claim (\ref{component convergence either or case}) is proved. Thereafter, we prove that:
\begin{align} \label{component convergence either or case Dsp Space}
    \text{either}~u_{n} \rightarrow \Tilde{u}~\text{strongly~in}~\mathcal{D}^{s_{1},2}(\mathbb{R}^{N})~\text{or}~v_{n} \rightarrow \Tilde{v}~\text{strongly~in}~\mathcal{D}^{s_{2},2}(\mathbb{R}^{N}).
\end{align}
We assume by the claim (\ref{component convergence either or case}) that the sequence $\{u_{n}\}$ strongly converges in $L^{2_{s_{1}}^{*}}(\mathbb{R}^{N})$. This implies that we have the following convergence
\begin{align*}
    \|u_{n}-\Tilde{u}\|_{\lambda_{1},s_{1}}^{2} = \big\langle J'_{\nu}(u_{n},v_{n})|(u_{n}-\Tilde{u},0)\big\rangle +o_n(1),
\end{align*}
which clearly shows $u_{n} \rightarrow \Tilde{u}$ in $\mathcal{D}^{s_{1},2}(\mathbb{R}^{N})$. Again, if we suppose that  $\{v_{n}\}$ strongly converges in $L^{2_{s_{2}}^{*}}(\mathbb{R}^{N})$, then $v_{n} \rightarrow \Tilde{v}$ in $\mathcal{D}^{s_{2},2}(\mathbb{R}^{N})$. Hence, the proof of claim \eqref{component convergence either or case Dsp Space} is done. Further, we consider two cases to prove the strong convergence of both the components $\{u_{n}\}$,$\{v_{n}\}$ in $\mathcal{D}^{s_{1},2}(\mathbb{R}^{N})$ and $\mathcal{D}^{s_{2},2}(\mathbb{R}^{N})$, respectively.

\noindent \textbf{Case 1:} The sequence $\{v_{n}\}$ converges strongly to $\Tilde{v}$ in $\mathcal{D}^{s_{2},2}(\mathbb{R}^{N})$.

On the contrary, assume that none of its subsequences converge. Let us assume first that the set $\mathcal{J} \cup\{0,\infty\}$ has more than one point, by combining (\ref{ inequalities with c}) with (\ref{rho concentartion}), (\ref{functional with first component at origin}), (\ref{rho node inequalities}), (\ref{functional with first component at infty}) and (\ref{rho infty inequalities}), we find

\begin{align*}
    c \geq \frac{2s_{1}}{N} S^{\frac{N}{2s_{1}}}(\lambda_{1}) \geq \frac{s_{2}}{N} S^{\frac{N}{2s_{1}}}(\lambda_{1})+ \frac{s_{2}}{N}S^{\frac{N}{2s_{2}}}(\lambda_{2}),
\end{align*}
 which contradicts assumption (\ref{c is less than sum}). Thus, there can not be more than one point i.e. $x_{j},~ j \in \mathcal{J}\cup \{0,\infty\}$  contains only one concentration point for the sequence $\{u_{n}\}$. Further, we shall show that $\Tilde{v} \not\equiv 0$. Again by the contradiction we assume that $\Tilde{v} \equiv 0$, then $\Tilde{u}\geq 0$ (as $u_{n}$ is a non-negative sequence) and $\Tilde{u}$ verifies

\begin{align} \label{PDE with v zero}
    (-\Delta)^{s_{1}} \Tilde{u} - \lambda_{1}\frac{\Tilde{u}}{|x|^{2s_{1}}}= \Tilde{u}^{2^{*}_{s_{1}}-1} ~\text{in} ~\mathbb{R}^{N}.
\end{align}
Therefore, for some $\mu > 0$, we have $\Tilde{u} = z_{\mu,s_1}^{\lambda_{1}}$ and $\int_{\mathbb{R}^{N}}\Tilde{u}^{2^{*}_{s_{1}}}\,\mathrm{d}x = S^{\frac{N}{2s_{1}}}(\lambda_{1})$ by \eqref{extremal value at norms}. Using the fact that there is only one concentration point for the sequence $\{u_{n}\}$, and combining  (\ref{ inequalities with c}) with (\ref{rho concentartion}), (\ref{functional with first component at origin}), (\ref{rho node inequalities}), we deduce the following inequality
\begin{align*}
    c \geq \frac{s_{1}}{N} \bigg(  \int_{\mathbb{R}^{N}}\Tilde{u}^{2_{s_{1}}^{*}}\,\mathrm{d}x + S^{\frac{N}{2s_{1}}}(\lambda_{1})\bigg) = \frac{2s_{1}}{N}S^{\frac{N}{2s_{1}}}(\lambda_{1}) \geq \frac{s_{2}}{N} S^{\frac{N}{2s_{1}}}(\lambda_{1})+ \frac{s_{2}}{N}S^{\frac{N}{2s_{2}}}(\lambda_{2}).
\end{align*}
This contradicts the assumption (\ref{c is less than sum}). In case of $\Tilde{v} \equiv 0$  and $\Tilde{u} \equiv 0$, we have that $\{u_{n}\}$ should verify the following

\begin{align*} \label{}
    (-\Delta)^{s_{1}} u_{n} - \lambda_{1}\frac{u_{n}}{|x|^{2s_{1}}} - u_{n}^{2^{*}_{s_{1}}-1} = o(1)~\text{in the~dual~space}~\big(\mathcal{D}^{s_{1},2}(\mathbb{R}^{N})\big)'.
\end{align*}
Which implies that
\begin{equation} \label{convergence one}
     \iint_{\mathbb{R}^{2N}} \frac{(u_{n}(x)-u_{n}(y))^{2}}{|x-y|^{N+2s_{1}}}\, \mathrm{d}x \mathrm{d}y - \lambda_{1}\int_{\mathbb{R}^{N}}\frac{u_{n}^{2}}{|x|^{2s_{1}}}\,\mathrm{d}x - \int_{\mathbb{R}^{N}}|u_{n}|^{2_{s_{1}}^{*}}\, \mathrm{d}x = o_n(1),
\end{equation}

and 
\begin{align*}
    c &= J_{\nu}(u_{n},v_{n}) + o_n(1) = J_{1}(u_{n}) + J_{2}(v_{n}) - \nu \int_{\mathbb{R}^{N}}h(x)(u_{n})^{\alpha}(v_{n})^{\beta}\,\mathrm{d}x + o_n(1)\\
    &= J_{1}(u_{n}) + 0 - 0 + o_n(1),~~as~\{v_{n}\}~\text{strongly~converges~to}~0~\text{and}~h\in L^{1}(\mathbb{R}^{N}) \cap L^{\infty}(\mathbb{R}^{N})\\
    &= \frac{1}{2} \iint_{\mathbb{R}^{2N}} \frac{(u_{n}(x)-u_{n}(y))^{2}}{|x-y|^{N+2s_{1}}} \mathrm{d}x \mathrm{d}y - \frac{\lambda_{1}}{2}\int_{\mathbb{R}^{N}}\frac{u_{n}^{2}}{|x|^{2s_{1}}}\,\mathrm{d}x - \frac{1}{2_{s_1}^{*}}\int_{\mathbb{R}^{N}}(u_{n})^{2_{s_{1}}^{*}}\,\mathrm{d}x + o_n(1)\\
    &= \frac{1}{2} \int_{\mathbb{R}^{N}}(u_{n})^{2_{s_{1}}^{*}} dx - \frac{1}{2_{s_{1}}^{*}}\int_{\mathbb{R}^{N}}(u_{n})^{2_{s_{1}}^{*}}\, \mathrm{d}x + o_n(1)~ \text{by using}~\eqref{convergence one}\\
   &= \frac{s_{1}}{N} \int_{\mathbb{R}^{N}}(u_{n})^{2_{s_{1}}^{*}}\, \mathrm{d}x + o_n(1) = \frac{s_{1}}{N} \rho_{j}, ~as~\Tilde{u} = 0 ~\text{and}~\{u_{n}\}~\text{concentrates~at~one~point}. 
\end{align*}
Also for every $j \in \mathcal{J}$, the sequence $\{u_{n}\}$ is a positive (PS) sequence for the functional given as
\begin{align*}
    J_{j}(u) = \frac{1}{2} \iint_{\mathbb{R}^{2N}} \frac{(u_{n}(x)-u_{n}(y))^{2}}{|x-y|^{N+2s_{1}}} dxdy  - \frac{1}{2_{s_{1}}^{*}}\int_{\mathbb{R}^{N}}(u_{n})^{2_{s_{1}}^{*}} dx
\end{align*}
Using the characterization of (PS) sequence for the functional $J_{j}$ provided by \cite{Pisante2015} (See (2.6) in the proof of Theorem 1.1), we find that $\rho_{j} = l{S_1}^{\frac{N}{2s_{1}}}$ for some $l \in \mathbb{N}$, which is a contradiction to (\ref{Sum is less than S}) and (\ref{c is less than sum}). Thus, $\mathcal{J} = \emptyset$. If $\{u_{n}\}$ is concentrating at points zero or infinity, we argue analogously for the functional $J_{1}$ and use the result provided by \cite{Bhakta2021} (See Theorem 2.1 part (v)) to obtain the following
\begin{align*}
    c = J_{\nu}(u_{n},v_{n}) + o(1) = J_{1}(u_{n}) + o(1) \rightarrow \frac{s_{1}l}{N}S^{\frac{N}{2s_{1}}}(\lambda_{1}),
\end{align*}
for some $l \in \mathbb{N}\cup \{0\}$. Therefore, we get a contradiction to (\ref{c is less than sum}) and hence $\Tilde{v}>0$ in $\mathbb{R}^{N}$. Further, we may assume that there exists $\Tilde{u} \not\equiv 0$ such that $u_{n} \rightharpoonup \Tilde{u}$ in $\mathcal{D}^{s_{1},2}(\mathbb{R}^{N})$. On contrary  let $\Tilde{u} = 0$, then $\Tilde{v}$ is a solution to the problem given by
\begin{align} \label{PDE with u zero}
    (-\Delta)^{s_{2}} \Tilde{v} - \lambda_{2}\frac{\Tilde{v}}{|x|^{2s_{2}}}= \Tilde{v}^{2^{*}_{s_{2}}-1} ~\text{in} ~\mathbb{R}^{N}.
\end{align}
Therefore, for some $\mu > 0$, we have $\Tilde{v} = z_{\mu,s_2}^{\lambda_{2}}$ and $\int_{\mathbb{R}^{N}}\Tilde{v}^{2^{*}_{s_{2}}}\,\mathrm{d}x = S^{\frac{N}{2s_{2}}}(\lambda_{2})$ by \eqref{extremal value at norms}. As a consequence, combining  (\ref{ inequalities with c}) with (\ref{rho concentartion}), (\ref{functional with first component at origin}), (\ref{rho node inequalities}), we conclude that
\begin{align*}
    c \geq \frac{s_{2}}{N} \int_{\mathbb{R}^{N}}\Tilde{v}^{2_{s_2}^{*}}\,\mathrm{d}x + \frac{s_{1}}{N} S^{\frac{N}{2s_1}}(\lambda_{1}) = \frac{s_{2}}{N} S^{\frac{N}{2s_{2}}}(\lambda_{2})+ \frac{s_{1}}{N} S^{\frac{N}{2s_{1}}}(\lambda_{1}) \geq \frac{s_{2}}{N} S^{\frac{N}{2s_{2}}}(\lambda_{2}) + \frac{s_{2}}{N} S^{\frac{N}{2s_{2}}}(\lambda_{2}),
\end{align*}
which contradicts the assumption (\ref{c is less than sum}). We can conclude that $\Tilde{u}, \Tilde{v} \not\equiv 0$. Further, we have
\begin{align} \label{equality with c}
    \begin{split}
        c& = J_{\nu}(u_{n},v_{n}) - \frac{1}{2}\langle J'_{\nu}(u_{n},v_{n})|(u_{n},v_{n})\rangle + o_n(1) \\
        &= \frac{s_{1}}{N}  \|u_{n}\|_{{2_{s_{1}}^{*}}}^{2_{s_{1}}^{*}}+ \frac{s_{2}}{N}  \|v_{n}\|_{{2_{s_{2}}^{*}}}^{2_{s_{2}}^{*}} + \nu \frac{\alpha+\beta-2}{2}\int_{\mathbb{R}^{N}}h(x)u_{n}^{\alpha}v_{n}^{\beta}\,\mathrm{d}x + o_n(1)\hspace{2.3cm}\\
        &= \frac{s_{1}}{N}  \|\Tilde{u}\|_{{2_{s_{1}}^{*}}}^{2_{s_{1}}^{*}}+ \frac{s_{2}}{N}  \|\Tilde{v}\|_{{2_{s_{2}}^{*}}}^{2_{s_{2}}^{*}} + \frac{s_{1}}{N}\rho_{j} + \nu \frac{\alpha+\beta-2}{2}\int_{\mathbb{R}^{N}}h(x)\Tilde{u}^{\alpha}\Tilde{v}^{\beta}\,\mathrm{d}x,
    \end{split}
\end{align}
by the concentration at $j \in \mathcal{J}\cup\{0,\infty\}$. Note that $\{v_{n}\}$ converges strongly to $\Tilde{v}$ in $\mathcal{D}^{s_{2},2}(\mathbb{R}^{N})$.\\
On the other hand, using $\langle J'_{\nu}(u_{n},v_{n})|(\Tilde{u},\Tilde{v})\rangle = o_n(1)$, we obtain the following expression
\begin{align*}
    \|(\Tilde{u},\Tilde{v})\|_{\mathbb{D}}^{2} = \|\Tilde{u}\|_{{2_{s_{1}}^{*}}}^{2_{s_{1}}^{*}} + \|\Tilde{v}\|_{{2_{s_{2}}^{*}}}^{2_{s_{2}}^{*}} + \nu(\alpha+\beta)\int_{\mathbb{R}^{N}}h(x)\Tilde{u}^{\alpha}\Tilde{v}^{\beta}\,\mathrm{d}x,
\end{align*}
which clearly shows that $(\Tilde{u},\Tilde{v}) \in \mathcal{N}_{\nu}$. Indeed, by combining (\ref{equality with c}),\eqref{three three two},(\ref{rho concentartion}),(\ref{functional with first component at origin}),(\ref{rho node inequalities}) and (\ref{ inequalities with c}), we get the following
\begin{align*}
    J_{\nu}(\Tilde{u},\Tilde{v}) &= \frac{s_{1}}{N}  \|\Tilde{u}\|_{{2_{s_{1}}^{*}}}^{2_{s_{1}}^{*}}+ \frac{s_{2}}{N}  \|\Tilde{v}\|_{{2_{s_{2}}^{*}}}^{2_{s_{2}}^{*}} + \nu \frac{\alpha+\beta-2}{2}\int_{\mathbb{R}^{N}}h(x)\Tilde{u}^{\alpha}\Tilde{v}^{\beta}\,\mathrm{d}x\\
    &= c - \frac{s_{1}}{N}\rho_{j} ~<~\frac{s_{2}}{N}S^{\frac{N}{2s_{1}}}(\lambda_{1})+\frac{s_{2}}{N}S^{\frac{N}{2s_{2}}}(\lambda_{2}) -\frac{s_{1}}{N} S^{\frac{N}{2s_{1}}}(\lambda_{1}) ~\leq~ \frac{s_{2}}{N} S^{\frac{N}{2s_{2}}}(\lambda_{2}). 
\end{align*}
Thus, using the above expression we have 
\begin{align*}
    \Tilde{c}_{\nu} = \inf_{(u,v) \in \mathcal{N}_{\nu}}J_{\nu}(u,v) < \frac{s_{2}}{N} S^{\frac{N}{2s_{2}}}(\lambda_{2}).
\end{align*}
But, according to Theorem \ref{third theorem}, we have $ \Tilde{c}_{\nu} = \frac{s_{2}}{N} S^{\frac{N}{2s_{2}}}(\lambda_{2})$ provided that $\nu$ is very small. Thus, we get a contradiction to the former inequality. Hence, the proof of $u_{n} \rightarrow \Tilde{u}$ strongly in $\mathcal{D}^{s_{1},2}(\mathbb{R}^{N})$ is completed.

\noindent\textbf{Case 2:} The sequence $\{u_{n}\}$ strongly converges to $\Tilde{u}$ in $\mathcal{D}^{s_{1},2}(\mathbb{R}^{N})$.\\
Our claim is that the sequence $\{v_{n}\}$ converges strongly to $\Tilde{v}$ in $\mathcal{D}^{s_{2},2}(\mathbb{R}^{N})$. Using the contradiction method, we assume that all of its subsequences do not converge. First we prove that $\Tilde{u} \not\equiv 0$. On contrary suppose that $\Tilde{u} \equiv 0$, then $\{v_{n}\}$ is a (PS) sequence for the functional $J_{2}$ given by \eqref{value of functional componentwise} at level c. Clearly for some $\mu>0$, $\Tilde{v} =  z_{\mu,s_2}^{\lambda_{2}}$ as $\{v_{n}\} \rightharpoonup \Tilde{v}~\text{in}~ \mathcal{D}^{s_{2},2}(\mathbb{R}^{N})$ and $\Tilde{v}$ is a solution to the problem (\ref{PDE with u zero}).Also, by applying the compactness theorem given by \cite{Bhakta2021} and using \eqref{value of functional componentwise} and \eqref{extremal value at norms}, we find that
\begin{align*}
    c = \lim_{n \rightarrow +\infty}J_{2}(v_{n}) = J_{2}( z_{\mu,s_2}^{\lambda_{2}}) + \frac{s_{2}}{N}m {S_2}^{\frac{N}{2s_{2}}} + \frac{s_{2}}{N}l S^{\frac{N}{2s_{2}}}(\lambda_{2}) = \frac{s_{2}}{N}m {S_2}^{\frac{N}{2s_{2}}} + \frac{s_{2}}{N}(l+1) S^{\frac{N}{2s_{2}}}(\lambda_{2}),
\end{align*}
for some $m \in \mathbb{N}$ and $l \in \mathbb{N}\cup\{0\}$. This contradicts the assumptions (\ref{c is less than sum}) and (\ref{c is not equal modified problem}). Therefore, the conclusion $\Tilde{u} \not\equiv 0$ follows immediately. On the other hand, the assumption $\Tilde{v} \equiv 0$ implies that $\Tilde{u}$ is a solution to the problem (\ref{PDE with v zero}) and that for some $\mu>0$, we have $\Tilde{u} = z_{\mu,s_1}^{\lambda_{1}}$ . Thus, we obtain
\begin{align*}
    c \geq \frac{s_{1}}{N} \int_{\mathbb{R}^{N}}\Tilde{u}^{2_{s_{1}}^{*}}\,\mathrm{d}x + \frac{s_{2}}{N} S^{\frac{N}{2s_{2}}}(\lambda_{2}) \geq \frac{s_{2}}{N} S^{\frac{N}{2s_{1}}}(\lambda_{1})+ \frac{s_{2}}{N} S^{\frac{N}{2s_{2}}}(\lambda_{2}),
\end{align*}
contradicting (\ref{c is less than sum}). Hence, we conclude that $\Tilde{u},\Tilde{v}\not\equiv 0$. Now as $(\Tilde{u},\Tilde{v})$ is a solution of (\ref{main problem}), we have
\begin{align} \label{three three two}
   J_{\nu}(\Tilde{u},\Tilde{v}) =  \frac{s_{1}}{N} \int_{\mathbb{R}^{N}} \Tilde{u}^{2_{s_{1}}^{*}}\,\mathrm{d}x + \frac{s_{2}}{N}\int_{\mathbb{R}^{N}}\Tilde{v}^{2_{s_{2}}^{*}}\,\mathrm{d}x + \nu \frac{\alpha+\beta-2}{2}\int_{\mathbb{R}^{N}}h(x)\Tilde{u}^{\alpha}\Tilde{v}^{\beta}\,\mathrm{d}x.
\end{align}
Since $\{v_{n}\}$ does not strongly converge to $\Tilde{v}$ in $\mathcal{D}^{s_{2},2}(\mathbb{R}^{N})$, there exists at least $k \in \mathcal{K}\cup\{0,\infty\}$ such that $\Bar{\rho}_{k}>0$ and using again (\ref{equality with c}), we get 
\begin{align*}
    c =   \bigg(\frac{s_{1}}{N} \int_{\mathbb{R}^{N}} \Tilde{u}^{2_{s_{1}}^{*}}\,\mathrm{d}x + \frac{s_{2}}{N}\int_{\mathbb{R}^{N}}\Tilde{v}^{2_{s_{2}}^{*}}\,\mathrm{d}x +\frac{s_{2}}{N} \sum\limits_{k \in \mathcal{K}}\Bar{\rho}_{k} + \Bar{\rho}_{0} + \Bar{\rho}_{\infty}\bigg) + \nu \frac{\alpha+\beta-2}{2}\int_{\mathbb{R}^{N}}h(x)\Tilde{u}^{\alpha}\Tilde{v}^{\beta}\,\mathrm{d}x.
\end{align*}
By using (\ref{rho bar concentartion})-(\ref{rho node inequalities}), (\ref{three three two}) and (\ref{c is less than sum}), one has
\begin{align} \label{three three three}
\begin{split}
    J_{\nu}(\Tilde{u},\Tilde{v}) &= c - \frac{s_{2}}{N} (\sum\limits_{k \in \mathcal{K}}\Bar{\rho}_{k} + \Bar{\rho}_{0}+\Bar{\rho}_{\infty})\\
    &< \frac{s_{2}}{N} S^{\frac{N}{2s_{1}}}(\lambda_{1})+ \frac{s_{2}}{N}S^{\frac{N}{2s_{2}}}(\lambda_{2})- \frac{s_{2}}{N}S^{\frac{N}{2s}}(\lambda_{2}) = \frac{s_{2}}{N}S^{\frac{N}{2s_{1}}}(\lambda_{1}).
\end{split}
\end{align}
Further, we use the definition of $S^{\frac{N}{2s_{1}}}(\lambda_{1})$ in the first equation of (\ref{main problem}) which implies that
\begin{align}\label{inequality with sigma}
    \sigma_{1} + \nu \alpha \int_{\mathbb{R}^{N}}h(x)\Tilde{u}^{\alpha}\Tilde{v}^{\beta}\,\mathrm{d}x = \iint_{\mathbb{R}^{2N}} \frac{(\Tilde{u}(x)-\Tilde{u}(y))^{2}}{|x-y|^{N+2s_{1}}}\, \mathrm{d}x \mathrm{d}y - \lambda_{1} \int_{\mathbb{R}^{N}}\frac{\Tilde{u}^{2}}{|x|^{2s_{1}}}\,\mathrm{d}x \geq S(\lambda_{1}) \sigma_{1}^{\frac{2}{2_{s_{1}}^{*}}},
\end{align}
such that $\sigma_{1} = \int_{\mathbb{R}^{N}}\Tilde{u}^{2_{s_{1}}^{*}}\,\mathrm{d}x$. Then applying H\"older's inequality leads to the following
\begin{align} \label{Holders inequality}
    \int_{\mathbb{R}^{N}}h(x)\Tilde{u}^{\alpha}\Tilde{v}^{\beta}\,\mathrm{d}x \leq C(h) \bigg(\int_{\mathbb{R}^{N}}\Tilde{u}^{2_{s_{1}}^{*}}\,\mathrm{d}x\bigg)^{\frac{\alpha}{2_{s_{1}}^{*}}}\bigg(\int_{\mathbb{R}^{N}}\Tilde{v}^{2_{s_{2}}^{*}}\,\mathrm{d}x\bigg)^{\frac{\beta}{2_{s_{2}}^{*}}}.
\end{align}
By combining (\ref{three three two}) and (\ref{Holders inequality}), we can transform (\ref{inequality with sigma}) into
\begin{align} \label{sigma with C}
    \sigma_{1} + C_1 \nu \sigma_{1}^{\frac{\alpha}{2} \big(\frac{N-2s_{1}}{N} \big)} \geq S(\lambda_{1}) \sigma_{1}^{\frac{N-2s_{1}}{N}},
\end{align}
where the constant $C_1 >0$ depends only on $N,s,\alpha,\beta,h$ and independent of $\Tilde{u}, \Tilde{v}$ and $\nu$. We know that $\Tilde{v}\not\equiv 0$, we can choose $\Tilde{\epsilon}>0$ such that $\int_{\mathbb{R}^{N}} \Tilde{v}^{2_{s_{2}}^{*}} \,\mathrm{d}x\geq \Tilde{\epsilon}$. Now take $\epsilon>0$ such a way that $\Tilde{\epsilon} \geq \epsilon S^{\frac{N}{2s_{1}}}(\lambda_{1})$. Since $\alpha \geq 2$ and (\ref{sigma with C}) holds, therefore we can apply Lemma \ref{Abdelloui fractional version} to get a fixed $\nu_0>0$ such that
\begin{align*}
    \sigma_{1} \geq (1-\epsilon)S^{\frac{N}{2s_{1}}}(\lambda_{1}) ~~\text{for any}~0<\nu\leq \nu_0.
\end{align*}
Combining (\ref{three three two}) and the last estimate, we get the following inequality
\begin{align*}
    J_{\nu}(\Tilde{u},\Tilde{v}) \geq \frac{s_{1}}{N} (1-\epsilon)S^{\frac{N}{2s_{1}}}(\lambda_{1})+ \frac{s_{2}}{N}\Tilde{\epsilon} \geq \frac{s_{2}}{N} (1-\epsilon)S^{\frac{N}{2s_{1}}}(\lambda_{1})+ \frac{s_{2}}{N} \epsilon S^{\frac{N}{2s_{1}}}(\lambda_{1}) = \frac{s_{2}}{N} S^{\frac{N}{2s_{1}}}(\lambda_{1}),
\end{align*}
 contradicting the inequality (\ref{three three three}). Hence, $\{v_{n}\}$ converges strongly to $\Tilde{v}$ in $\mathcal{D}^{s_2,2}(\mathbb{R}^{N})$. Proceeding in the same way, the result can be proved for the case $s_{1} \leq s_{2}$. Finally, combining both the cases leads to the conclusion that the (PS) sequences strongly converge in $\mathbb{D}$ to a non-trivial limit. Hence the proof is complete.
\end{proof}
In the following lemma, we will derive the Palais-Smale compactness condition for the functional $J_{\nu}^{+}$ associated with the modified problem \eqref{modified problem} when the exponent $\beta \geq 2$ and $S^{\frac{N}{2s_{2}}}(\lambda_{2}) \geq S^{\frac{N}{2s_{1}}}(\lambda_{1})$.

\begin{lemma} \label{lemma three six}
 Assume $\alpha+\beta< \min\{2_{s_{1}}^{*},2_{s_{2}}^{*} \}$ and (\ref{condition on h}), $\beta \geq 2,~ S^{\frac{N}{2s_{2}}}(\lambda_{2}) \geq S^{\frac{N}{2s_{1}}}(\lambda_{1})$ and 
\begin{align} 
    S^{\frac{N}{2s_{1}}}(\lambda_{1})+S^{\frac{N}{2s_{2}}}(\lambda_{2}) < \min\{{S_1}^{\frac{N}{2s_{1}}},{S_2}^{\frac{N}{2s_{2}}}\}.
\end{align}
Then, there exists $\nu_0>0$ such that, if $0< \nu \leq \nu_0$ and $\{(u_{n},v_{n})\} \subset \mathbb{D}$ is a PS sequence for $J_{\nu}^{+}$ at level $c \in \mathbb{R}$ such that 
\begin{align}\label{c is less than sum analogous}
    \frac{s_{2}}{N}S^{\frac{N}{2s_{2}}}(\lambda_{2}) < c < \frac{\min\{s_{1},s_{2}\}}{N}\bigg(S^{\frac{N}{2s_{1}}}(\lambda_{1})+S^{\frac{N}{2s_{2}}}(\lambda_{2})\bigg),
\end{align}
and 
\begin{align} \label{c is not equal modified problem analogous}
    c \neq \frac{s_{1}l}{N}S^{\frac{N}{2s_{1}}}(\lambda_{1})~~~\text{for every}~l\in \mathbb{N}\backslash \{0\},
\end{align}
then $ \exists ~(\Tilde{u},\Tilde{v}) \in \mathbb{D}$ such that $(u_{n},v_{n}) \rightarrow (\Tilde{u},\Tilde{v}) \in \mathbb{D}$ up to a subsequence.
\end{lemma}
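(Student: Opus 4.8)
The plan is to re-run the proof of Lemma~\ref{PS compactness lemma second} after interchanging the two components, so throughout I would apply the substitution $u \leftrightarrow v$, $s_{1} \leftrightarrow s_{2}$, $\lambda_{1} \leftrightarrow \lambda_{2}$, $\alpha \leftrightarrow \beta$, together with the relabelling $\mathcal{J} \leftrightarrow \mathcal{K}$ and $\{(\mu_{j},\rho_{j})\} \leftrightarrow \{(\Bar{\mu}_{k},\Bar{\rho}_{k})\}$ of the concentration data. Under this relabelling the hypotheses here are precisely those of Lemma~\ref{PS compactness lemma second}: the condition $\alpha \geq 2$ becomes $\beta \geq 2$, the ordering $S^{\frac{N}{2s_{1}}}(\lambda_{1}) \geq S^{\frac{N}{2s_{2}}}(\lambda_{2})$ becomes $S^{\frac{N}{2s_{2}}}(\lambda_{2}) \geq S^{\frac{N}{2s_{1}}}(\lambda_{1})$, and the level restrictions \eqref{c is less than sum analogous}--\eqref{c is not equal modified problem analogous} are the images of \eqref{c is less than sum}--\eqref{c is not equal modified problem}. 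I would assume without loss of generality that $s_{2} \geq s_{1}$, the opposite ordering being treated identically at the end.

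First I would invoke Lemma~\ref{boundedness lemma} to obtain boundedness of the (PS) sequence and extract a weak limit $(\Tilde{u},\Tilde{v}) \in \mathbb{D}$. Testing $(J_{\nu}^{+})'(u_{n},v_{n})$ against $(u_{n}^{-},0)$ and $(0,v_{n}^{-})$ and using the fractional Hardy inequality \eqref{fractional Hardy inequality} exactly as in Lemma~\ref{PS compactness lemma second} forces $u_{n}^{-} \to 0$ and $v_{n}^{-} \to 0$ strongly, so the sequence may be regarded as a non-negative (PS) sequence for $J_{\nu}$. The concentration--compactness analysis of Lemma~\ref{PS compactness lemma} then yields the weak-$*$ limits \eqref{Concentration compactness} and the alternatives \eqref{rho concentartion}--\eqref{rho node inequalities} at the interior points and the origin, as well as \eqref{functional with first component at infty}--\eqref{rho infty inequalities} at infinity. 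Feeding these into the energy identity \eqref{value of c} and using \eqref{c is less than sum analogous}, the same computation as in Lemma~\ref{PS compactness lemma second} produces the dichotomy \eqref{component convergence either or case}---either $u_{n} \to \Tilde{u}$ in $L^{2_{s_{1}}^{*}}(\mathbb{R}^{N})$ or $v_{n} \to \Tilde{v}$ in $L^{2_{s_{2}}^{*}}(\mathbb{R}^{N})$---which upgrades through \eqref{component convergence either or case Dsp Space} to strong convergence of the corresponding component in $\mathcal{D}^{s_{1},2}(\mathbb{R}^{N})$ or $\mathcal{D}^{s_{2},2}(\mathbb{R}^{N})$.

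It then remains to split into two cases. In the case where $u_{n} \to \Tilde{u}$ strongly in $\mathcal{D}^{s_{1},2}(\mathbb{R}^{N})$ (the mirror of Case~1 of Lemma~\ref{PS compactness lemma second}), I would rule out more than one concentration point for $\{v_{n}\}$, then exclude $\Tilde{u} \equiv 0$ and $\Tilde{v} \equiv 0$ via the Sobolev characterisations of the (PS) levels in \cite{Pisante2015,Bhakta2021}, reaching $J_{\nu}(\Tilde{u},\Tilde{v}) < \frac{s_{1}}{N}S^{\frac{N}{2s_{1}}}(\lambda_{1})$; this contradicts the ground-state level $\Tilde{c}_{\nu} = \frac{s_{1}}{N}S^{\frac{N}{2s_{1}}}(\lambda_{1})$ furnished, for small $\nu$, by Theorem~\ref{third theorem}. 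The remaining case, where $v_{n} \to \Tilde{v}$ strongly in $\mathcal{D}^{s_{2},2}(\mathbb{R}^{N})$ but $\{u_{n}\}$ is not yet known to converge, is where $\beta \geq 2$ becomes essential, and I expect it to be the main obstacle.

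In this case I would first show $\Tilde{u},\Tilde{v} \not\equiv 0$ using \eqref{c is less than sum analogous} and \eqref{c is not equal modified problem analogous}, and record the energy identity \eqref{three three two}. If $\{u_{n}\}$ fails to converge strongly, some $\rho_{j}>0$ with $j \in \mathcal{J}\cup\{0,\infty\}$ survives and the estimate corresponding to \eqref{three three three} gives $J_{\nu}(\Tilde{u},\Tilde{v}) < \frac{s_{1}}{N}S^{\frac{N}{2s_{2}}}(\lambda_{2})$. On the other hand, testing the second equation against $\Tilde{v}$, applying the Sobolev embedding \eqref{embeddings} and the H\"{o}lder estimate \eqref{Holders inequality}, I would obtain, with $\sigma_{2} := \int_{\mathbb{R}^{N}}\Tilde{v}^{2_{s_{2}}^{*}}\,\mathrm{d}x$, the algebraic inequality $\sigma_{2} + C\nu\,\sigma_{2}^{\frac{\beta}{2}(\frac{N-2s_{2}}{N})} \geq S(\lambda_{2})\,\sigma_{2}^{\frac{N-2s_{2}}{N}}$, the exact analogue of \eqref{sigma with C}. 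Since $\beta \geq 2$, Lemma~\ref{Abdelloui fractional version} applies with $\delta=\beta$ and $s=s_{2}$ and yields $\sigma_{2} \geq (1-\epsilon)S^{\frac{N}{2s_{2}}}(\lambda_{2})$ for all sufficiently small $\nu$; combined with $\int_{\mathbb{R}^{N}}\Tilde{u}^{2_{s_{1}}^{*}}\,\mathrm{d}x \geq \Tilde{\epsilon} \geq \epsilon S^{\frac{N}{2s_{2}}}(\lambda_{2})$ and the ordering $s_{2} \geq s_{1}$, this forces $J_{\nu}(\Tilde{u},\Tilde{v}) \geq \frac{s_{1}}{N}S^{\frac{N}{2s_{2}}}(\lambda_{2})$, contradicting the previous strict inequality. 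Hence $\{u_{n}\}$ converges strongly, and combining the two cases (together with the symmetric treatment of $s_{1} \geq s_{2}$) gives strong convergence of $\{(u_{n},v_{n})\}$ in $\mathbb{D}$. The genuinely delicate point is exactly this bootstrap via Lemma~\ref{Abdelloui fractional version}, which requires $\beta \geq 2$ so that the relevant exponent $\frac{\beta}{2}(\frac{N-2s_{2}}{N})$ lies in the range for which $\inf T_{\nu} \to S^{\frac{N}{2s_{2}}}(\lambda_{2})$ as $\nu \to 0^{+}$.
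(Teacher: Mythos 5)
Your proposal is correct and is precisely the argument the paper intends: the paper states this lemma without a written proof, as the mirror image of Lemma \ref{PS compactness lemma second} under the swap $(u,s_{1},\lambda_{1},\alpha)\leftrightarrow(v,s_{2},\lambda_{2},\beta)$, and your relabelled run-through --- boundedness via Lemma \ref{boundedness lemma}, reduction to a non-negative (PS) sequence, the dichotomy \eqref{component convergence either or case}, the appeal to Theorem \ref{third theorem}(ii) in the mirrored Case 1, and the bootstrap on $\sigma_{2}=\int_{\mathbb{R}^{N}}\Tilde{v}^{2_{s_{2}}^{*}}\,\mathrm{d}x$ via Lemma \ref{Abdelloui fractional version} with $\delta=\beta\geq 2$, $s=s_{2}$ in the mirrored Case 2 --- reproduces it faithfully. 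The two opposing estimates you record, $J_{\nu}(\Tilde{u},\Tilde{v})<\frac{s_{1}}{N}S^{\frac{N}{2s_{2}}}(\lambda_{2})$ versus the lower bound $J_{\nu}(\Tilde{u},\Tilde{v})\geq\frac{s_{1}}{N}S^{\frac{N}{2s_{2}}}(\lambda_{2})$ forced by $\sigma_{2}\geq(1-\epsilon)S^{\frac{N}{2s_{2}}}(\lambda_{2})$ and $s_{2}\geq s_{1}$, are the correct transcriptions of \eqref{three three three} and \eqref{sigma with C}.
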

Next, we give the behavior of the semi-trivial solutions based on the coupling parameter $\nu$ (small or large) and with different values of $\alpha,\beta$.

\begin{proposition} \label{Semitrivial solution as a minimizer}
Under hypotheses (\ref{ alpha beta condition}) and (\ref{condition on h}), the following holds:
\begin{itemize}

     \item [(i)] The pair $(z_{\mu,s_{1}}^{\lambda_{1}},0)$ is a local minimum of $J_{\nu}$ on $\mathcal{N}_{\nu}$ for $\beta>2~or~ \beta=2$ and $\nu$ sufficiently small.
    \item [(ii)] The pair $(0,z_{\mu,s_{2}}^{\lambda_{2}})$ is a local minimum of $J_{\nu}$ on $\mathcal{N}_{\nu}$ for $\alpha>2~or~ \alpha=2$ and $\nu$ sufficiently small.
     \item [(iii)] The pair $(z_{\mu,s_{1}}^{\lambda_{1}},0)$ is a saddle point for $J_{\nu}$ on $\mathcal{N}_{\nu}$ for $\beta<2~or~ \beta=2$ and $\nu$ sufficiently large.
     \item [(iv)] The pair $(0,z_{\mu,s_{2}}^{\lambda_{2}})$ is a saddle point for $J_{\nu}$ on $\mathcal{N}_{\nu}$ for $\alpha<2~or~ \alpha=2$ and $\nu$ sufficiently large.
\end{itemize}
\end{proposition}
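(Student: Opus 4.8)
The four assertions form two symmetric pairs: interchanging the two components (together with the triples $(\alpha,s_1,\lambda_1)$ and $(\beta,s_2,\lambda_2)$) turns (i) into (ii) and (iii) into (iv), so I would only treat the statements about $(z,0)$ with $z:=z_{\mu,s_1}^{\lambda_1}$, namely (i) and (iii). Recall first that $(z,0)\in\mathcal N_\nu$ is a semi-trivial critical point of $J_\nu$, so that $J_\nu(z,0)=\frac{s_1}{N}S^{\frac{N}{2s_1}}(\lambda_1)$ by \eqref{extremal value at norms}. The engine of the whole argument is the single-equation Nehari rescaling: for $u\neq0$ let $t=t(u)>0$ be the unique number with $t^{2_{s_1}^*-2}=\|u\|_{\lambda_1,s_1}^2/\|u\|_{2_{s_1}^*}^{2_{s_1}^*}$, so that $tu$ lies on the Nehari manifold of \eqref{single problem} and, by \eqref{S lambda},
\[
J_{\lambda_1}(tu)=\frac{s_1}{N}\,t^{2_{s_1}^*}\|u\|_{2_{s_1}^*}^{2_{s_1}^*}\ \ge\ \frac{s_1}{N}S^{\frac{N}{2s_1}}(\lambda_1).
\]

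\emph{Local minimum (i).} Take $(u,v)\in\mathcal N_\nu$ close to $(z,0)$, so that $u$ is close to $z$ and $v$ is small. Substituting the Nehari identity \eqref{equivalent norm} into the definition of $t(u)$ gives
\[
t(u)^{2_{s_1}^*-2}=1+\frac{\|v\|_{2_{s_2}^*}^{2_{s_2}^*}+\nu(\alpha+\beta)\int_{\mathbb R^N}h|u|^\alpha|v|^\beta\,\mathrm dx-\|v\|_{\lambda_2,s_2}^2}{\|u\|_{2_{s_1}^*}^{2_{s_1}^*}}.
\]
The plan is to show the numerator is negative for $v$ small, i.e. $t(u)\le1$. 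When $\beta>2$ this is immediate, since $\|v\|_{2_{s_2}^*}^{2_{s_2}^*}=O(\|v\|_{\lambda_2,s_2}^{2_{s_2}^*})$ and $\int h|u|^\alpha|v|^\beta=O(\|v\|_{\lambda_2,s_2}^{\beta})$ are of higher order than the quadratic $\|v\|_{\lambda_2,s_2}^2$. When $\beta=2$ I would bound the coupling term by Hölder's inequality and the Sobolev embedding \eqref{embeddings}, getting $\int h|u|^\alpha v^2\le C\|v\|_{\lambda_2,s_2}^2$ with $C$ depending only on $h$ and $\|z\|_{2_{s_1}^*}$ (hence uniform for $u$ near $z$) and independent of $\nu$; the numerator is then negative once $\nu<\nu_*:=\big((\alpha+2)C\big)^{-1}$ and $\|v\|$ is small. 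In either case $t(u)\le1$, and, discarding the two nonnegative terms in \eqref{energy functional on Nehari manifold} (nonnegative because $\alpha+\beta>2$ by \eqref{ alpha beta condition} and $h>0$),
\[
J_\nu(u,v)\ \ge\ \frac{s_1}{N}\|u\|_{2_{s_1}^*}^{2_{s_1}^*}\ \ge\ \frac{s_1}{N}t(u)^{2_{s_1}^*}\|u\|_{2_{s_1}^*}^{2_{s_1}^*}\ \ge\ \frac{s_1}{N}S^{\frac{N}{2s_1}}(\lambda_1)=J_\nu(z,0),
\]
which is exactly local minimality.

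\emph{Saddle (iii).} Here the strategy is to exhibit a curve in $\mathcal N_\nu$ through $(z,0)$ along which $J_\nu$ strictly decreases. Fix $0<w\in\mathcal D^{s_2,2}(\mathbb R^N)$ and, for small $s>0$, let $\tau(s)>0$ be the unique Nehari rescaling of $(z,sw)$ furnished by \eqref{Algebraic equation}, so $(\tau(s)z,\tau(s)sw)\in\mathcal N_\nu$ and $\tau(s)\to1$. Using $\|z\|_{\lambda_1,s_1}^2=\|z\|_{2_{s_1}^*}^{2_{s_1}^*}=S^{\frac{N}{2s_1}}(\lambda_1)$ in \eqref{Algebraic equation} and expanding $\tau(s)=1+o(1)$, a short computation with the restricted energy \eqref{energy functional on Nehari manifold} yields, for $\beta<2$,
\[
J_\nu(\tau(s)z,\tau(s)sw)=\frac{s_1}{N}S^{\frac{N}{2s_1}}(\lambda_1)-\nu\Big(\int_{\mathbb R^N}h\,z^\alpha w^\beta\,\mathrm dx\Big)s^\beta+o(s^\beta),
\]
while for $\beta=2$ the coefficient of $s^2$ is $\tfrac12\|w\|_{\lambda_2,s_2}^2-\nu\int_{\mathbb R^N}h\,z^\alpha w^2\,\mathrm dx$. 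In the first case the $s^\beta$-term is negative for every admissible $w$; in the second it can be made negative by choosing $w$ with $\int h\,z^\alpha w^2>0$ once $\nu$ is large. Either way there are points of $\mathcal N_\nu$ arbitrarily close to $(z,0)$ with energy below $J_\nu(z,0)$. On the other hand, on $\{v=0\}\cap\mathcal N_\nu$ the functional reduces to $J_{\lambda_1}$ on the Nehari manifold of \eqref{single problem}, whose minimum $\frac{s_1}{N}S^{\frac{N}{2s_1}}(\lambda_1)$ is attained precisely on the ground-state orbit, so nearby non-extremal $(u,0)$ have strictly larger energy. Hence $(z,0)$ is neither a local minimum nor a local maximum, i.e. a saddle point.

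The delicate point throughout is the borderline exponent $\beta=2$, where the coupling term and the quadratic form $\|\cdot\|_{\lambda_2,s_2}^2$ act at the same order $s^2$: the whole dichotomy then rests on the single comparison constant $C$ produced by Hölder's inequality and \eqref{embeddings}, which must be uniform for $u$ near $z$ and independent of $\nu$ in order to pin down the threshold $\nu_*$ separating (i) from (iii). For $\beta\neq2$ the two scales $s^\beta$ and $s^2$ separate and the sign is forced automatically; the only mild care required there is that, when $\beta<2$, the coupling term is merely $C^1$ and not $C^2$, which is precisely why I would read off the leading behaviour directly from the algebraic Nehari relation \eqref{Algebraic equation} rather than from a second-order Taylor expansion of $J_\nu$.
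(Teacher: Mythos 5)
Your proposal is correct and follows essentially the same route as the paper's proof: for the local-minimum parts you project the first component onto the scalar Nehari manifold $\mathcal{N}_{\lambda_1}$ and exploit that $z_{\mu,s_1}^{\lambda_1}$ minimizes $J_{\lambda_1}$ there, and for the saddle parts you build the Nehari-rescaled curve $(\tau(s)z,\tau(s)sw)\in\mathcal{N}_\nu$, whose energy drops like $-\nu s^\beta\int_{\mathbb{R}^N}h\,z^\alpha w^\beta\,\mathrm{d}x$ for $\beta<2$ (respectively $\big(\tfrac12\|w\|_{\lambda_2,s_2}^2-\nu\int_{\mathbb{R}^N}h\,z^\alpha w^2\,\mathrm{d}x\big)s^2$ for $\beta=2$ and $\nu$ large), and combine this with strict minimality of $(z,0)$ along the slice $\mathcal{N}_{\lambda_1}\times\{0\}$ — exactly the paper's \eqref{maximum point}–\eqref{minimum point} mechanism, including the symmetry reduction of (ii), (iv) to (i), (iii). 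The only deviation is in part (i), where you infer $t(u)\le 1$ directly from the sign of the Nehari numerator and then chain $J_\nu(u,v)\ge \frac{s_1}{N}\|u\|_{2_{s_1}^*}^{2_{s_1}^*}\ge J_{\lambda_1}(t(u)u)\ge \frac{s_1}{N}S^{\frac{N}{2s_1}}(\lambda_1)$, instead of the paper's asymptotic expansions \eqref{ square of t}–\eqref{ two star of t} and the explicit difference computation; this is a tidier but substantively equivalent bookkeeping of the same idea.
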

\begin{proof}
$(i)$ Let $\mu>0, \beta>2$ and $(z_{\mu,s_{1}}^{\lambda_{1}}+\phi ,~ \psi) \in \mathcal{N}_{\nu}$ i.e.,
\begin{align} \label{local minimum norm}
    \begin{split}
         \|(z_{\mu,s_{1}}^{\lambda_{1}}+\phi ,~ \psi)\|_{\mathbb{D}}^{2}
    &= \|z_{\mu,s_{1}}^{\lambda_{1}}+\phi\|_{{2_{s_{1}}^{*}}}^{2_{s_{1}}^{*}} + \|\psi\|_{{2_{s_{2}}^{*}}}^{2_{s_{2}}^{*}} + \nu (\alpha + \beta) \int_{\mathbb{R}^{N}} h(x)|z_{\mu,s_{1}}^{\lambda_{1}}+\phi|^{\alpha}|\psi|^{\beta}\,\mathrm{d}x.\\
    \|z_{\mu,s_{1}}^{\lambda_{1}}+\phi\|_{\lambda_{1},s_{1}}^{2} + \|\psi\|_{\lambda_{2},s_{2}}^{2}&=\|z_{\mu,s_{1}}^{\lambda_{1}}+\Psi\|_{{2_{s_{1}}^{*}}}^{2_{s_{1}}^{*}} + \|\psi\|_{{2_{s_{2}}^{*}}}^{2_{s_{2}}^{*}} + \nu (\alpha + \beta) \int_{\mathbb{R}^{N}} h(x)|z_{\mu,s_{1}}^{\lambda_{1}}+\phi|^{\alpha}|\psi|^{\beta}\,\mathrm{d}x.
    \end{split}
\end{align}
Let $t = t_{(\phi,\psi)}>0$ be such that $t(z_{\mu,s_{1}}^{\lambda_{1}}+\phi) \in \mathcal{N}_{\lambda_{1}}$, where $\mathcal{N}_{\lambda_{1}}$  denotes the Nehari manifold associated to $J_{1}$, namely $\mathcal{N}_{\lambda_{1}}$ is defined by 
$$\mathcal{N}_{\lambda_{1}} = \big\{u \in \mathcal{D}^{s_{1},2}(\mathbb{R}^{N})\backslash \{0\} : \|u\|_{\lambda_{1},s_{1}}^{2} = \|u\|_{2_{s_{1}}^{*}}^{{2_{s_{1}}^{*}}}\big\}.$$
Since $t(z_{\mu,s_{1}}^{\lambda_{1}}+\phi) \in \mathcal{N}_{\lambda_{1}}$, we have
\begin{align*}
    \|t(z_{\mu,s_{1}}^{\lambda_{1}}+\phi)\|_{\lambda_{1},s_{1}}^{2} &= \|t(z_{\mu,s_{1}}^{\lambda_{1}}+\phi)\|_{2_{s_{1}}^{*}}^{{2_{s_{1}}^{*}}}\\
    t^2\|z_{\mu,s_{1}}^{\lambda_{1}}+\phi\|_{\lambda_{1},s_{1}}^{2} &=t^{2_{s_{1}}^{*}}\|z_{\mu,s_{1}}^{\lambda_{1}}+\phi\|_{2_{s_{1}}^{*}}^{{2_{s_{1}}^{*}}}\\
    t^{2_{s_{1}}^{*} -2} &= \frac{\|z_{\mu,s_{1}}^{\lambda_{1}}+\phi\|_{\lambda_{1},s_{1}}^{2}}{\|z_{\mu,s_{1}}^{\lambda_{1}}+\phi\|_{2_{s_{1}}^{*}}^{{2_{s_{1}}^{*}}}}
\end{align*}
Therefore, we get
\begin{equation} \label{value of t}
    t = \bigg( \frac{\|z_{\mu,s_{1}}^{\lambda_{1}}+\phi\|_{\lambda_{1},s_{1}}^{2}}{\|z_{\mu,s_{1}}^{\lambda_{1}}+\phi\|_{2_{s_{1}}^{*}}^{{2_{s_{1}}^{*}}}}\bigg)^{\frac{1}{2_{s_{1}}^{*} -2}}.
\end{equation}
Combining \eqref{local minimum norm} and \eqref{value of t}, we get
\begin{align*}
    t &= t_{(\phi, \psi)} = \bigg[\frac{   \|z_{\mu,s_{1}}^{\lambda_{1}}+\phi\|_{{2_{s_{1}}^{*}}}^{2_{s_{1}}^{*}}-\|\psi\|_{\lambda_{2},s_{2}}^{2} + \|\psi\|_{{2_{s_{2}}^{*}}}^{2_{s_{2}}^{*}} + \nu (\alpha + \beta) \int_{\mathbb{R}^{N}} h(x)|z_{\mu,s_{1}}^{\lambda_{1}}+\phi|^{\alpha}|\psi|^{\beta}\,\mathrm{d}x}{\|z_{\mu,s_{1}}^{\lambda_{1}}+\phi\|_{2_{s_{1}}^{*}}^{{2_{s_{1}}^{*}}}} \bigg]^{\frac{1}{2_{s_{1}}^{*} -2}}\\
    &=\bigg[1- \frac{\|\psi\|_{\lambda_{2},s_{2}}^{2} - \|\psi\|_{{2_{s_{2}}^{*}}}^{2_{s_{2}}^{*}} - \nu (\alpha + \beta) \int_{\mathbb{R}^{N}} h(x)|z_{\mu,s_{1}}^{\lambda_{1}}+\phi|^{\alpha}|\psi|^{\beta}\,\mathrm{d}x}{\|z_{\mu,s_{1}}^{\lambda_{1}}+\phi\|_{2_{s_{1}}^{*}}^{{2_{s_{1}}^{*}}}} \bigg]^{\frac{1}{2_{s_{1}}^{*} -2}}.
\end{align*}
Since $h \in L^{1}(\mathbb{R}^{N}) \cap L^{\infty}(\mathbb{R}^{N})$, we have the following H\"older's Inequality
\begin{align*}
     \int_{\mathbb{R}^{N}}h(x)|z_{\mu,s_{1}}^{\lambda_{1}}+\phi|^{\alpha} |\psi|^{\beta}\,\mathrm{d}x &\leq C(h) \bigg(\int_{\mathbb{R}^{N}}|z_{\mu,s_{1}}^{\lambda_{1}}+\phi|^{2_{s_{1}}^{*}}\,\mathrm{d}x\bigg)^{\frac{\alpha}{2_{s_{1}}^{*}}}\bigg(\int_{\mathbb{R}^{N}}|\psi|^{2_{s_{2}}^{*}}\,\mathrm{d}x\bigg)^{\frac{\beta}{2_{s_{2}}^{*}}}\\
     &= C(h)\|z_{\mu,s_{1}}^{\lambda_{1}}+\phi\|_{2_{s_{1}}^{*}}^{\alpha} \|\psi\|_{2_{s_{2}}^{*}}^{\beta}\\
     &\leq C(h) \frac{1}{(S(\lambda_{2}))^{\beta /2}}\|z_{\mu,s_{1}}^{\lambda_{1}}+\phi\|_{2_{s_{1}}^{*}}^{\alpha} \|\psi\|_{\lambda_{2},s_{2}}^{\beta},
\end{align*}
where the last inequality follows from the Sobolev embedding given by
$$S(\lambda_{2}) \|\psi\|_{2_{s_{2}}^{*}}^{2} \leq\|\psi\|_{\lambda_{2},s_{2}}^{2}.$$
Now 
\begin{align*}
     t^2 &= \bigg[1- \frac{\|\psi\|_{\lambda_{2},s_{2}}^{2} - \|\psi\|_{{2_{s_{2}}^{*}}}^{2_{s_{2}}^{*}} - \nu (\alpha + \beta) \int_{\mathbb{R}^{N}} h(x)|z_{\mu,s_{1}}^{\lambda_{1}}+\phi|^{\alpha}|\psi|^{\beta}\,\mathrm{d}x}{\|z_{\mu,s_{1}}^{\lambda_{1}}+\phi\|_{2_{s_{1}}^{*}}^{{2_{s_{1}}^{*}}}} \bigg]^{\frac{2}{2_{s_{1}}^{*} -2}}\\
     &= \bigg[1- \frac{\|\psi\|_{\lambda_{2},s_{2}}^{2} }{\|z_{\mu,s_{1}}^{\lambda_{1}}+\phi\|_{2_{s_{1}}^{*}}^{{2_{s_{1}}^{*}}}} + A(\phi,\psi) \bigg]^{\frac{2}{2_{s_{1}}^{*} -2}}.
\end{align*}
Where 
\begin{align*}
    A(\phi,\psi) &=  \frac{ \|\psi\|_{{2_{s_{2}}^{*}}}^{2_{s_{2}}^{*}} + \nu (\alpha + \beta) \int_{\mathbb{R}^{N}} h(x)|z_{\mu,s_{1}}^{\lambda_{1}}+\phi|^{\alpha}|\psi|^{\beta}\,\mathrm{d}x}{\|z_{\mu,s_{1}}^{\lambda_{1}}+\phi\|_{2_{s_{1}}^{*}}^{{2_{s_{1}}^{*}}}} \\
    &\leq  \frac{ \|\psi\|_{{2_{s_{2}}^{*}}}^{2_{s_{2}}^{*}} + \nu (\alpha + \beta) C(h)\|z_{\mu,s_{1}}^{\lambda_{1}}+\phi\|_{2_{s_{1}}^{*}}^{\alpha} \|\psi\|_{2_{s_{2}}^{*}}^{\beta}}{\|z_{\mu,s_{1}}^{\lambda_{1}}+\phi\|_{2_{s_{1}}^{*}}^{{2_{s_{1}}^{*}}}}\\
    &\leq \frac{ C_2\|\psi\|_{\lambda_{2},s_{2}}^{2_{s_{2}}^{*}} + \nu (\alpha + \beta) C_1(h)\|z_{\mu,s_{1}}^{\lambda_{1}}+\phi\|_{2_{s_{1}}^{*}}^{\alpha} \|\psi\|_{\lambda_{2},s_{2}}^{\beta}}{\|z_{\mu,s_{1}}^{\lambda_{1}}+\phi\|_{2_{s_{1}}^{*}}^{{2_{s_{1}}^{*}}}},~\text{where}~C_2 = \frac{1}{S(\lambda_{2})^{2_{s_{2}}^{*}/2}}.
\end{align*}
Taking $\|\psi\|_{\lambda_{2},s_{2}}^{2}$ as common from the above expression, we have
$$  A(\phi,\psi) \leq \frac{\|\psi\|_{\lambda_{2},s_{2}}^{2}}{\|z_{\mu,s_{1}}^{\lambda_{1}}+\phi\|_{2_{s_{1}}^{*}}^{{2_{s_{1}}^{*}}}} \bigg( C_2\|\psi\|_{\lambda_{2},s_{2}}^{2_{s_{2}}^{*} -2} + \nu (\alpha + \beta) C_1(h)\|z_{\mu,s_{1}}^{\lambda_{1}}+\phi\|_{2_{s_{1}}^{*}}^{\alpha} \|\psi\|_{\lambda_{2},s_{2}}^{\beta -2} \bigg).$$
Since $\beta >2$, we can conclude that $\|\psi\|_{\lambda_{2},s_{2}}^{\beta -2} \rightarrow 0$ as $\|(\phi,\psi)\|_{\mathbb{D}} \rightarrow 0.$ That means 
$$A(\phi,\psi) \rightarrow \frac{\|\psi\|_{\lambda_{2},s_{2}}^{2}}{\|z_{\mu,s_{1}}^{\lambda_{1}}+\phi\|_{2_{s_{1}}^{*}}^{{2_{s_{1}}^{*}}}}o(1) ~\text{as}~\|(\phi,\psi)\|_{\mathbb{D}} \rightarrow 0.$$
Therefore,
\begin{equation} \label{ square of t}
    t^2 = 1 - \frac{2}{2_{s_{1}}^{*}-2} \frac{\|\psi\|_{\lambda_{2},s_{2}}^{2}}{\|z_{\mu,s_{1}}^{\lambda_{1}}+\phi\|_{2_{s_{1}}^{*}}^{{2_{s_{1}}^{*}}}}(1+o(1)) ~\text{as}~\|(\phi,\psi)\|_{\mathbb{D}} \rightarrow 0.
\end{equation}
Similarly,
\begin{equation} \label{ two star of t}
    t^{2_{s_{1}}^{*}} = 1 - \frac{2_{s_{1}}^{*}}{2_{s_{1}}^{*}-2} \frac{\|\psi\|_{\lambda_{2},s_{2}}^{2}}{\|z_{\mu,s_{1}}^{\lambda_{1}}+\phi\|_{2_{s_{1}}^{*}}^{{2_{s_{1}}^{*}}}}(1+o(1)) ~\text{as}~\|(\phi,\psi)\|_{\mathbb{D}} \rightarrow 0.
\end{equation}
As $z_{\mu,s_{1}}^{\lambda_{1}}$ achieves the minimum of $J_{1} = J_{\nu}(\cdot~,~0)$ on $\mathcal{N}_{\lambda_{1}}$, the following holds i.e.,
\begin{equation} \label{inequality one}
    J_{\nu}(t(z_{\mu,s_{1}}^{\lambda_{1}}+\phi),0) - J_{\nu}(z_{\mu,s_{1}}^{\lambda_{1}},0) \geq 0.
\end{equation}
On the other hand, from \eqref{local minimum norm}-\eqref{ two star of t} we deduce that
\begin{align*}
     & J_{\nu}(z_{\mu,s_{1}}^{\lambda_{1}}+\phi,\psi) -J_{\nu}(t(z_{\mu,s_{1}}^{\lambda_{1}}+\phi),0) \\
     &= \frac{1}{2}\|(z_{\mu,s_{1}}^{\lambda_{1}}+\phi,\psi)\|_{\mathbb{D}}^{2} - \frac{1}{2_{s_{1}}^{*}}\|z_{\mu,s_{1}}^{\lambda_{1}}+\phi\|_{2_{s_{1}}^{*}}^{2_{s_{1}}^{*}} - \frac{1}{2_{s_{2}}^{*}}\|\psi\|_{2_{s_{2}}^{*}}^{2_{s_{2}}^{*}} -\nu \int_{\mathbb{R}^{N}}h(x)|z_{\mu,s_{1}}^{\lambda_{1}}+\phi|^{\alpha}|\psi|^{\beta}\,\mathrm{d}x\\
     &~~~~~ - \frac{1}{2}\|(t(z_{\mu,s_{1}}^{\lambda_{1}}+\phi),0)\|_{\mathbb{D}}^{2} + \frac{1}{2_{s_{1}}^{*}}\|t(z_{\mu,s_{1}}^{\lambda_{1}}+\phi)\|_{2_{s_{1}}^{*}}^{2_{s_{1}}^{*}}\\
     &= \frac{1}{2}\|z_{\mu,s_{1}}^{\lambda_{1}}+\phi\|_{\lambda_{1},s_{1}}^{2} + \frac{1}{2}\|\psi\|_{\lambda_{2},s_{2}}^{2} - \frac{1}{2_{s_{1}}^{*}}\|z_{\mu,s_{1}}^{\lambda_{1}}+\phi\|_{2_{s_{1}}^{*}}^{2_{s_{1}}^{*}} - \frac{1}{2_{s_{2}}^{*}}\|\psi\|_{2_{s_{2}}^{*}}^{2_{s_{2}}^{*}} -\nu \int_{\mathbb{R}^{N}}h(x)|z_{\mu,s_{1}}^{\lambda_{1}}+\phi|^{\alpha}|\psi|^{\beta}\,\mathrm{d}x\\
     &~~~~~ -\frac{t^2}{2}\|z_{\mu,s_{1}}^{\lambda_{1}}+\phi\|_{\lambda_{1},s_{1}}^{2} + \frac{t^{2_{s_{1}}^{*}}}{2_{s_{1}}^{*}}\|z_{\mu,s_{1}}^{\lambda_{1}}+\phi\|_{2_{s_{1}}^{*}}^{2_{s_{1}}^{*}}\\
     &= \frac{1}{2}(1-t^2)\|z_{\mu,s_{1}}^{\lambda_{1}}+\phi\|_{\lambda_{1},s_{1}}^{2} - \frac{1}{2_{s_{1}}^{*}}(1-t^{2_{s_{1}}^{*}})\|z_{\mu,s_{1}}^{\lambda_{1}}+\phi\|_{2_{s_{1}}^{*}}^{2_{s_{1}}^{*}}+ \frac{1}{2}\|\psi\|_{\lambda_{2},s_{2}}^{2}-\frac{1}{2_{s_{2}}^{*}}\|\psi\|_{2_{s_{2}}^{*}}^{2_{s_{2}}^{*}} -\\ &\quad-\nu \int_{\mathbb{R}^{N}}h(x)|z_{\mu,s_{1}}^{\lambda_{1}}+\phi|^{\alpha}|\psi|^{\beta}\,\mathrm{d}x.
\end{align*}
Using \eqref{local minimum norm} in the second term of the above equation, we get
\begin{align*}
    &= \frac{1}{2}(1-t^2)\|z_{\mu,s_{1}}^{\lambda_{1}}+\phi\|_{\lambda_{1},s_{1}}^{2} - \frac{1}{2_{s_{1}}^{*}}(1-t^{2_{s_{1}}^{*}})\Bigg[ \|z_{\mu,s_{1}}^{\lambda_{1}}+\phi\|_{\lambda_{1},s_{1}}^{2} + \|\psi\|_{\lambda_{2},s_{2}}^{2}- \|\psi\|_{{2_{s_{2}}^{*}}}^{2_{s_{2}}^{*}} \\
    &~~~~~ - \nu (\alpha + \beta) \int_{\mathbb{R}^{N}} h(x)|z_{\mu,s_{1}}^{\lambda_{1}}+\phi|^{\alpha}|\psi|^{\beta}\,\mathrm{d}x \Bigg] + \frac{1}{2}\|\psi\|_{\lambda_{2},s_{2}}^{2}-\frac{1}{2_{s_{2}}^{*}}\|\psi\|_{2_{s_{2}}^{*}}^{2_{s_{2}}^{*}} -\nu \int_{\mathbb{R}^{N}}h(x)|z_{\mu,s_{1}}^{\lambda_{1}}+\phi|^{\alpha}|\psi|^{\beta}\,\mathrm{d}x\\
    &= \bigg[\frac{1}{2}(1-t^2) - \frac{1}{2_{s_{1}}^{*}}(1-t^{2_{s_{1}}^{*}}) \bigg]\|z_{\mu,s_{1}}^{\lambda_{1}}+\phi\|_{\lambda_{1},s_{1}}^{2} - \frac{1}{2_{s_{1}}^{*}}(1-t^{2_{s_{1}}^{*}}) \Bigg[ \|\psi\|_{\lambda_{2},s_{2}}^{2}- \|\psi\|_{{2_{s_{2}}^{*}}}^{2_{s_{2}}^{*}}\\
    &~~~~~ - \nu (\alpha + \beta) \int_{\mathbb{R}^{N}} h(x)|z_{\mu,s_{1}}^{\lambda_{1}}+\phi|^{\alpha}|\psi|^{\beta}\,\mathrm{d}x \Bigg]+ \frac{1}{2}\|\psi\|_{\lambda_{2},s_{2}}^{2}-\frac{1}{2_{s_{2}}^{*}}\|\psi\|_{2_{s_{2}}^{*}}^{2_{s_{2}}^{*}} -\nu \int_{\mathbb{R}^{N}}h(x)|z_{\mu,s_{1}}^{\lambda_{1}}+\phi|^{\alpha}|\psi|^{\beta}\,\mathrm{d}x\\
    & = \frac{1}{2} \|\psi\|_{\lambda_{2},s_{2}}^{2} (1+o(1)),
\end{align*}
as $\|(\phi,\psi)\|_{\mathbb{D}} \rightarrow 0$. Hence
\begin{equation}\label{inequality two}
     J_{\nu}(z_{\mu,s_{1}}^{\lambda_{1}}+\phi,\psi) -J_{\nu}(t(z_{\mu,s_{1}}^{\lambda_{1}}+\phi),0) \geq 0
\end{equation}
provided $(z_{\mu,s_{1}}^{\lambda_{1}}+\phi,\psi)$ is very near to $(z_{\mu,s_{1}}^{\lambda_{1}},0)$ in $\mathbb{D}$. If $\psi \not\equiv 0$, the inequality given by \eqref{inequality two} holds strictly. We conclude from \eqref{inequality one} and \eqref{inequality two} that
\begin{equation*}
     J_{\nu}(z_{\mu,s_{1}}^{\lambda_{1}}+\phi,\psi) -  J_{\nu}(z_{\mu,s_{1}}^{\lambda_{1}},0) \geq 0 \notag
\end{equation*}
for any $(z_{\mu,s_{1}}^{\lambda_{1}}+\phi,\psi) \in \mathcal{N}_{\nu}$ sufficiently closed to $(z_{\mu,s_{1}}^{\lambda_{1}},0)$, i.e., $(z_{\mu,s_{1}}^{\lambda_{1}},0)$ is a local minimum point of $J_{\nu}$ in $\mathcal{N}_{\nu}$.

In the case $\beta =2$, we obtain that 
\begin{align*}
   & J_{\nu}(z_{\mu,s_{1}}^{\lambda_{1}}+\phi,\psi) -J_{\nu}(t(z_{\mu,s_{1}}^{\lambda_{1}}+\phi),0) \\
   &~~~~ = \bigg(\frac{1}{2}\|\psi\|_{\lambda_{2},s_{2}}^{2}-\nu \int_{\mathbb{R}^{N}}h(x)|z_{\mu,s_{1}}^{\lambda_{1}}+\phi|^{\alpha}|\psi|^{\beta}\,\mathrm{d}x \bigg)(1+o(1)),
\end{align*}
which is still non-negative provided $\|(\phi,\psi)\|_{\mathbb{D}}$ and $\nu$ are sufficiently small.

\noindent $(ii)$ The proof works in the same way as $(i)$, hence we omit it. 

\noindent $(iii)$ To prove this part we suppose $\beta<2$ and we fix $\nu>0,~ \mu>0$ and $v \in \mathcal{D}^{{s_2},2}(\mathbb{R}^{N})\backslash \{0\}.$ For every $t \in \mathbb{R}$, we denote by $s(t)>0$ as the unique number such that $(s(t)z_{\mu,s_{1}}^{\lambda_{1}}, s(t)tv) \in \mathcal{N}_{\nu}$ i.e.,
\small
\begin{align*}
    & \|s(t)z_{\mu,s_{1}}^{\lambda_{1}}\|_{\lambda_{1},s_{1}}^{2} + \|s(t)tv\|_{\lambda_{2},s_{2}}^{2}=\|s(t)z_{\mu,s_{1}}^{\lambda_{1}}\|_{{2_{s_{1}}^{*}}}^{2_{s_{1}}^{*}} + \|s(t)tv\|_{{2_{s_{2}}^{*}}}^{2_{s_{2}}^{*}} + \nu (\alpha + \beta) \int_{\mathbb{R}^{N}} h(x)|s(t)z_{\mu,s_{1}}^{\lambda_{1}}|^{\alpha}|s(t)tv|^{\beta}\,\mathrm{d}x \end{align*}
    and then we get 
     \begin{align*}[s(t)]^2(\|z_{\mu,s_{1}}^{\lambda_{1}}\|_{\lambda_{1},s_{1}}^{2} +  \|tv\|_{\lambda_{2},s_{2}}^{2})&= [s(t)]^{{2_{s_{1}}^{*}}}\|z_{\mu,s_{1}}^{\lambda_{1}}\|_{{2_{s_{1}}^{*}}}^{2_{s_{1}}^{*}} + \\ &\quad + [s(t)]^{{2_{s_{2}}^{*}}}\|tv\|_{{2_{s_{2}}^{*}}}^{2_{s_{2}}^{*}} + [s(t)]^{\alpha + \beta}\nu (\alpha + \beta) \int_{\mathbb{R}^{N}} h(x)|z_{\mu,s_{1}}^{\lambda_{1}}|^{\alpha}|tv|^{\beta}\,\mathrm{d}x,\end{align*} which also implies 
     \begin{align*}\|z_{\mu,s_{1}}^{\lambda_{1}}\|_{\lambda_{1},s_{1}}^{2} +  t^2\|v\|_{\lambda_{2},s_{2}}^{2}&= [s(t)]^{{2_{s_{1}}^{*}}-2}\|z_{\mu,s_{1}}^{\lambda_{1}}\|_{{2_{s_{1}}^{*}}}^{2_{s_{1}}^{*}} + |t|^{2_{s_{2}}^{*}}[s(t)]^{{2_{s_{2}}^{*}}-2}\|v\|_{{2_{s_{2}}^{*}}}^{2_{s_{2}}^{*}} +\\ &\quad + [s(t)]^{\alpha + \beta -2}\nu (\alpha + \beta)|t|^\beta \int_{\mathbb{R}^{N}} h(x)|z_{\mu,s_{1}}^{\lambda_{1}}|^{\alpha}|v|^{\beta}\,\mathrm{d}x.
\end{align*} \normalsize
Now putting $t=0$ in the above expression, we get
\begin{align*}
    \|z_{\mu,s_{1}}^{\lambda_{1}}\|_{\lambda_{1},s_{1}}^{2} &= [s(0)]^{{2_{s_{1}}^{*}}-2}\|z_{\mu,s_{1}}^{\lambda_{1}}\|_{{2_{s_{1}}^{*}}}^{2_{s_{1}}^{*}},\end{align*} further calculations give us \begin{align*}
    [s(0)]^{{2_{s_{1}}^{*}}-2} &= \frac{ \|z_{\mu,s_{1}}^{\lambda_{1}}\|_{\lambda_{1},s_{1}}^{2}}{\|z_{\mu,s_{1}}^{\lambda_{1}}\|_{{2_{s_{1}}^{*}}}^{2_{s_{1}}^{*}}} = 1, ~~(\text{as}~z_{\mu,s_{1}}^{\lambda_{1}}~\text{is a minimizer of} ~J_1~\text{over}~\mathcal{N}_{\lambda_1} )\end{align*}
   which implies $s(0) =1$.
   
Now from the Implicit Function Theorem, it follows that $s \in C^{1}(\mathbb{R})$ and 
\tiny \[ s'(t) = \frac{ 2t\|v\|_{\lambda_{2},s_{2}}^{2}-2_{s_{2}}^{*}|t|^{2_{s_{2}}^{*}-2}t[s(t)]^{{2_{s_{2}}^{*}}-2}\|v\|_{{2_{s_{2}}^{*}}}^{2_{s_{2}}^{*}} -\beta \nu (\alpha + \beta) [s(t)]^{\alpha + \beta -2}|t|^{\beta-2}t \int_{\mathbb{R}^{N}} h(x)|z_{\mu,s_{1}}^{\lambda_{1}}|^{\alpha}|v|^{\beta}\,\mathrm{d}x}{(2_{s_{1}}^{*}-2) [s(t)]^{{2_{s_{1}}^{*}}-3}\|z_{\mu,s_{1}}^{\lambda_{1}}\|_{{2_{s_{1}}^{*}}}^{2_{s_{1}}^{*}} + (2_{s_{2}}^{*}-2)|t|^{2_{s_{2}}^{*}}[s(t)]^{{2_{s_{2}}^{*}}-3}\|v\|_{{2_{s_{2}}^{*}}}^{2_{s_{2}}^{*}} + [s(t)]^{\alpha + \beta -3}\nu (\alpha + \beta)(\alpha + \beta -2)|t|^\beta \int_{\mathbb{R}^{N}} h(x)|z_{\mu,s_{1}}^{\lambda_{1}}|^{\alpha}|v|^{\beta}\,\mathrm{d}x}, \] \normalsize
for all $t \in \mathbb{R}$. Hence, since $\beta<2$,
$$ s'(t) = - \Bigg[ \frac{ \beta \nu (\alpha + \beta) [s(0)]^{\alpha + \beta -2} \int_{\mathbb{R}^{N}} h(x)|z_{\mu,s_{1}}^{\lambda_{1}}|^{\alpha}|v|^{\beta}\,\mathrm{d}x}{(2_{s_{1}}^{*}-2) [s(0)]^{{2_{s_{1}}^{*}}-3}\|z_{\mu,s_{1}}^{\lambda_{1}}\|_{{2_{s_{1}}^{*}}}^{2_{s_{1}}^{*}} }  \Bigg]|t|^{\beta-2}t~ (1 + o(1))~~~\text{as}~~t \rightarrow 0, $$
$$  s'(t) = - \Bigg[ \frac{ \beta \nu (\alpha + \beta) \int_{\mathbb{R}^{N}} h(x)|z_{\mu,s_{1}}^{\lambda_{1}}|^{\alpha}|v|^{\beta}\,\mathrm{d}x}{(2_{s_{1}}^{*}-2) \|z_{\mu,s_{1}}^{\lambda_{1}}\|_{{2_{s_{1}}^{*}}}^{2_{s_{1}}^{*}} }  \Bigg]|t|^{\beta-2}t~ (1 + o(1))~~~\text{as}~~t \rightarrow 0. $$
Further, we have
$$ s(t) = 1 - \Bigg[ \frac{ \nu (\alpha + \beta) \int_{\mathbb{R}^{N}} h(x)|z_{\mu,s_{1}}^{\lambda_{1}}|^{\alpha}|v|^{\beta}\,\mathrm{d}x}{(2_{s_{1}}^{*}-2) \|z_{\mu,s_{1}}^{\lambda_{1}}\|_{{2_{s_{1}}^{*}}}^{2_{s_{1}}^{*}} }  \Bigg]|t|^{\beta}~ (1 + o(1))~~~\text{as}~~t \rightarrow 0. $$
In particular, there holds that
\begin{equation} \label{ function of s at two star}
    [s(t)]^{2_{s_{1}}^{*}} = 1 - \Bigg[ \frac{N \nu (\alpha + \beta) \int_{\mathbb{R}^{N}} h(x)|z_{\mu,s_{1}}^{\lambda_{1}}|^{\alpha}|v|^{\beta}\,\mathrm{d}x}{2s_{1} \|z_{\mu,s_{1}}^{\lambda_{1}}\|_{{2_{s_{1}}^{*}}}^{2_{s_{1}}^{*}} }  \Bigg]|t|^{\beta}~ (1 + o(1))~~~\text{as}~~t \rightarrow 0.
\end{equation}
Recall that for any $(tu, tv) \in \mathcal{N}_{\nu} $, the energy functional $J_\nu$ can be written as
\begin{align*}
    J_{\nu}(tu,tv) = \frac{s_{1}}{N} t^{2_{s_{1}}^{*}} \|u\|_{2_{s_{1}}^{*}}^{2_{s_{1}}^{*}} + \frac{s_{2}}{N} t^{2_{s_{2}}^{*}} \|v\|_{2_{s_{2}}^{*}}^{2_{s_{2}}^{*}} + \frac{\nu (\alpha +\beta -2)}{2} t^{\alpha +\beta} \int_{\mathbb{R}^{N}} h(x)|u|^{\alpha}|v|^{\beta}\,\mathrm{d}x.
\end{align*}
Combining the above equation with \eqref{ function of s at two star}, we get
\begin{align*}
   & J_{\nu}(s(t)z_{\mu,s_{1}}^{\lambda_{1}}, s(t)tv) - J_{\nu}(z_{\mu,s_{1}}^{\lambda_{1}}, 0) \\
   &=  \frac{s_{1}([s(t)]^{2_{s_{1}}^{*}}-1)}{N}  \|z_{\mu,s_{1}}^{\lambda_{1}}\|_{2_{s_{1}}^{*}}^{2_{s_{1}}^{*}} + \frac{s_{2}[s(t)]^{2_{s_{2}}^{*}}}{N} |t|^{2_{s_{2}}^{*}} \|v\|_{2_{s_{2}}^{*}}^{2_{s_{2}}^{*}} + \frac{\nu (\alpha +\beta -2)}{2}[s(t)]^{\alpha + \beta} |t|^{\beta} \int_{\mathbb{R}^{N}} h(x)|z_{\mu,s_{1}}^{\lambda_{1}}|^{\alpha}|v|^{\beta}\,\mathrm{d}x\\
   &=- \frac{\nu(\alpha+\beta)}{2}|t|^\beta \int_{\mathbb{R}^{N}} h(x)|z_{\mu,s_{1}}^{\lambda_{1}}|^{\alpha}|v|^{\beta}\,\mathrm{d}x + \frac{\nu (\alpha +\beta -2)}{2}[s(t)]^{\alpha + \beta} |t|^{\beta} \int_{\mathbb{R}^{N}} h(x)|z_{\mu,s_{1}}^{\lambda_{1}}|^{\alpha}|v|^{\beta}\,\mathrm{d}x + o(|t|^\beta)\\
   &= -\nu ~|t|^{\beta} \int_{\mathbb{R}^{N}} h(x)|z_{\mu,s_{1}}^{\lambda_{1}}|^{\alpha}|v|^{\beta}\,\mathrm{d}x + o(|t|^\beta) ~~\text{as}~~t \rightarrow 0.
\end{align*}
Therefore, 
\begin{equation} \label{maximum point}
    J_{\nu}(s(t)z_{\mu,s_{1}}^{\lambda_{1}}, s(t)tu) - J_{\nu}(z_{\mu,s_{1}}^{\lambda_{1}}, 0) <0 ~~\text{for}~t \neq 0~\text{and}~ t~ \text{is small},
\end{equation}
which implies that the pair $(z_{\mu,s_{1}}^{\lambda_{1}}, 0)$ is a local strict maximum point for $J_{\nu}$ along a path lying in the Nehari manifold $\mathcal{N}_{\nu}$. Now by the definition of $\mathcal{N}_{\lambda_{1}}$,  $w \in \mathcal{N}_{\lambda_{1}}$ if and only if $(w,0) \in \mathcal{N}_{\nu}$. Also, the minimizers of $J_{\nu}(\cdot~,~0)$ on $\mathcal{N}_{\lambda_{1}}$ are given by the pairs $\{(z_{\sigma,s_{1}}^{\lambda_{1}},0)~:~ \sigma>0\}$. Then
\begin{equation} \label{minimum point}
    J_{\nu}(w, 0) - J_{\nu}(z_{\mu,s_{1}}^{\lambda_{1}}, 0)>0, ~~\text{for all}~w \in \mathcal{N}_{\lambda_{1},s_{1}} \backslash \{(z_{\sigma,s_{1}}^{\lambda_{1}},0)~:~ \sigma>0\}
\end{equation}
which shows that the pair $(z_{\mu,s_{1}}^{\lambda_{1}}, 0)$ is a local minimum for $J_{\nu}$ restricted to $\mathcal{N}_{\lambda_{1}} \times \{0\} \subset \mathcal{N}_{\nu}$. Thus, we deduce from \eqref{maximum point} and \eqref{minimum point} that $(z_{\mu,s_{1}}^{\lambda_{1}}, 0)$ is a saddle point for $J_{\nu}$ in $\mathcal{N}_{\nu}$.\

Now we just assume that $\nu$ is sufficiently large and follow the above argument. Then
$$ s'(t) = - 2 \Bigg[ \frac{  \nu (\alpha + 2) \int_{\mathbb{R}^{N}} h(x)|z_{\mu,s_{1}}^{\lambda_{1}}|^{\alpha}|u|^{2}\,\mathrm{d}x - \|u\|_{\lambda_{2},s_{2}}^{2}}{(2_{s_{1}}^{*}-2) \|z_{\mu,s_{1}}^{\lambda_{1}}\|_{{2_{s_{1}}^{*}}}^{2_{s_{1}}^{*}} }  \Bigg]t~ (1 + o(1))~~~\text{as}~~t \rightarrow 0,$$
and hence
$$J_{\nu}(s(t)z_{\mu,s_{1}}^{\lambda_{1}}, s(t)tu) - J_{\nu}(z_{\mu,s_{1}}^{\lambda_{1}}, 0) = \Bigg(\frac{1}{2}\|u\|_{\lambda_{2},s_{2}}^{2} - \nu \int_{\mathbb{R}^{N}} h(x)|z_{\mu,s_{1}}^{\lambda_{1}}|^{\alpha}|u|^{2}\,\mathrm{d}x  \Bigg)|t|^2 + o(|t|^2)~\text{as}~t \rightarrow 0. $$
This implies that the inequality \eqref{maximum point} holds for $\nu$ sufficiently large.

\noindent $(iv)$ Proceeding in the same way as in the proof of $(iii)$, we get the required result.
\end{proof}
\subsection{The case \texorpdfstring{${\alpha} +{\beta} = \min\{2_{s_{1}}^{*}, 2_{s_{2}}^{*}\} $}{TEXT}} 
We assume some extra continuity assumptions on $h$ to deal with this critical case. { Precisely,
more} hypotheses on the function $h$ are supposed to address this case. In particular,
\begin{equation}\label{H one}
  0 \leq h \in L^{1}(\mathbb{R}^{N}) \cap L^{\infty}(\mathbb{R}^{N})
  ,~h~\text{continuous~near}~0~\text{and}~\infty,~\text{and}~h(0) = \lim\limits_{|x| \rightarrow +\infty}h(x) = 0. \tag\mathbb{H1}
\end{equation}
Adding to that, we will distinguish two different cases: one with $h$ radial case and the other when $h$ is non-radial. To deal with the $h$ non-radial case, we need to impose one extra assumption on $\nu$ i.e., it should be small enough.
Let us define the space of radial functions in $\mathbb{D}$ 
\[ \mathbb{D}_r := \mathcal{D}^{s_1,2}_r(\mathbb{R}^{N}) \times \mathcal{D}^{s_2,2}_r(\mathbb{R}^{N}) = \{(u,v) \in \mathbb{D} : u ~\text{and}~v~\text{are radially symmetric}\}. \]

\begin{lemma}\label{critical with h radial}
Assume that ${\alpha} +{\beta} = \min\{2_{s_{1}}^{*}, 2_{s_{2}}^{*}\} $ and (\ref{H one}), and $h$ is a radial function.
\begin{enumerate}
    \item[(i)] If $\{(u_{n},v_{n})\} \subset \mathbb{D}_{r}$ is a PS sequence for $J_{\nu}$ at level $c \in \mathbb{R}$ such that c satisfies (\ref{energy level PS}), then the sequence $\{(u_{n},v_{n})\}$ admits a subsequence strongly converging in $\mathbb{D}$.
   \item[(ii)] If $\alpha \geq 2$ and $S^{\frac{N}{2s_{1}}}(\lambda_{1}) \geq S^{\frac{N}{2s_{2}}}(\lambda_{2})$, and $\{(u_{n},v_{n})\} \subset \mathbb{D}_{r}$ is a PS sequence for $J_{\nu}^{+}$ at level $c \in \mathbb{R}$ such that c satisfies (\ref{c is less than sum}) and (\ref{c is not equal modified problem}), then there exists ${\nu}_{1} > 0$ and $(\Tilde{u},\Tilde{v}) \in \mathbb{D}_{r}$ such that $(u_{n},v_{n}) \rightarrow (\Tilde{u},\Tilde{v})~ \text{in}~ \mathbb{D}_{r}$ up to subsequence for every $0 < \nu \leq  {\nu}_{1}$. 
    \item[(iii)] If $\beta \geq 2$ and $S^{\frac{N}{2s_{2}}}(\lambda_{2})\geq S^{\frac{N}{2s_{1}}}(\lambda_{1})$, and $\{(u_{n},v_{n})\} \subset \mathbb{D}_{r}$ is a PS sequence for $J_{\nu}^{+}$ at level $c \in \mathbb{R}$ such that c satisfies (\ref{c is less than sum analogous}) and (\ref{c is not equal modified problem analogous}), then there exists ${\nu}_{2} > 0$ and $(\Tilde{u},\Tilde{v}) \in \mathbb{D}_{r}$ such that $(u_{n},v_{n}) \rightarrow (\Tilde{u},\Tilde{v})~ \text{in}~ \mathbb{D}_{r}$ up to subsequence for every $0 < \nu \leq  {\nu}_{2}$.
\end{enumerate}
\end{lemma}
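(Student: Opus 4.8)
The plan is to re-run the concentration--compactness machinery of Lemma \ref{PS compactness lemma} and Lemma \ref{PS compactness lemma second}, but to exploit radial symmetry to suppress the bubbling at nonzero points, which is exactly the mechanism that was missing in the critical regime on the whole space. First I would observe that, since $\{(u_n,v_n)\} \subset \mathbb{D}_r$ consists of radial functions, the densities $|D^{s_1}u_n|^2$, $|u_n|^{2_{s_1}^*}$, etc.\ are radial, so the limiting measures $\mu,\bar\mu,\rho,\bar\rho,\gamma,\bar\gamma$ produced by (\ref{Concentration compactness}) are invariant under $O(N)$. A rotation--invariant finite measure cannot carry an atom at a point $x_j \neq 0$: such an atom would be replicated with equal mass along the whole orbit $\{|x|=|x_j|\}$, forcing infinite total mass. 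Hence the index sets $\mathcal{J}$ and $\mathcal{K}$ are empty, and only the atom at the origin and the mass escaping to infinity (encoded by $\mu_0,\rho_0,\gamma_0,\mu_\infty,\rho_\infty,\gamma_\infty$ and their barred analogues) survive.

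For part (i), with $\mathcal{J}=\mathcal{K}=\emptyset$, I would repeat the testing argument of Lemma \ref{PS compactness lemma} using only the cut--offs $\Psi_{0,\epsilon}$ and $\Psi_{\infty,\epsilon}$. The sole new point is the coupling term $\nu\alpha\int h|u_n|^\alpha|v_n|^\beta\Psi$ in the critical regime $\alpha+\beta=\min\{2_{s_1}^*,2_{s_2}^*\}$. When $s_1\neq s_2$ one has $\frac{\alpha}{2_{s_1}^*}+\frac{\beta}{2_{s_2}^*}<1$ and the argument of Lemma \ref{PS compactness lemma} applies verbatim. The genuinely borderline case is $s_1=s_2$, where $\frac{\alpha}{2_{s_1}^*}+\frac{\beta}{2_{s_2}^*}=1$ and the extra vanishing factor $(\int h\Psi)^{1-\frac{\alpha}{2_{s_1}^*}-\frac{\beta}{2_{s_2}^*}}$ is no longer available; there I would invoke hypothesis (\ref{H one}) by pulling $\sup_{B_\epsilon(0)} h$ out of the integral,
\[ \int_{\mathbb{R}^N} h\,|u_n|^\alpha |v_n|^\beta \Psi_{0,\epsilon}\, \mathrm{d}x \leq \Big(\sup_{B_\epsilon(0)} h\Big)\Big(\int_{\mathbb{R}^N} |u_n|^{2_{s_1}^*}\Psi_{0,\epsilon}\, \mathrm{d}x\Big)^{\frac{\alpha}{2_{s_1}^*}}\Big(\int_{\mathbb{R}^N} |v_n|^{2_{s_2}^*}\Psi_{0,\epsilon}\, \mathrm{d}x\Big)^{\frac{\beta}{2_{s_2}^*}}, \]
so that continuity of $h$ near $0$ with $h(0)=0$ makes the prefactor tend to $0$ while the remaining factors stay bounded; the analogous estimate with $h$ continuous near $\infty$ and $\lim_{|x|\to\infty}h(x)=0$ neutralizes the coupling at infinity. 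With the coupling gone, (\ref{functional with first component at origin}) and (\ref{functional with first component at infty}) give $\rho_0,\bar\rho_0,\rho_\infty,\bar\rho_\infty\in\{0\}\cup[S^{N/2s_i}(\lambda_i),\infty)$, and the energy bound (\ref{energy level PS}) forces all of them to vanish, exactly as in Lemma \ref{PS compactness lemma}, yielding strong convergence in $\mathbb{D}$.

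For parts (ii) and (iii), I would mirror Lemma \ref{PS compactness lemma second} and Lemma \ref{lemma three six}, again using the radial reduction $\mathcal{J}=\mathcal{K}=\emptyset$ so that compactness can only be lost at $0$ and $\infty$. After reducing to a non-negative $(PS)$ sequence by testing against $(u_n^-,0)$ and $(0,v_n^-)$, I would establish the dichotomy (\ref{component convergence either or case}) that at least one component converges strongly, and then carry out the two--case analysis: the energy window (\ref{c is less than sum}) together with the exclusion (\ref{c is not equal modified problem}) and the classification of Palais--Smale sequences for the scalar equation (\ref{single problem}) rule out the surviving concentration, while the smallness of $\nu$ enters precisely through Lemma \ref{Abdelloui fractional version} (using $\alpha\geq 2$), as in Case~2 of Lemma \ref{PS compactness lemma second}. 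Part (iii) is symmetric, interchanging the two components and using $\beta\geq 2$ with the reversed ordering of the constants.

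I expect the main obstacle to be the treatment of the coupling term at $0$ and $\infty$ in the borderline case $s_1=s_2$, where the H\"older exponent is critical and the only usable decay is the hypothesis $h(0)=\lim_{|x|\to\infty}h(x)=0$; getting the cut--off estimates to interact correctly with the weak--$*$ limits (\ref{Concentration compactness}) is the delicate bookkeeping. A secondary subtlety is justifying rigorously that the limiting concentration measures inherit radiality and are therefore atom--free away from the origin, which is what confines all the lost compactness to $0$ and $\infty$ and makes the critical case solvable in $\mathbb{D}_r$.
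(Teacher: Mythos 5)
Your proposal is correct and follows essentially the same route as the paper: radial symmetry confines any loss of compactness to $0$ and $\infty$, hypothesis (\ref{H one}) is used to annihilate the coupling term at those two points, and the rest is delegated to the machinery of Lemmas \ref{PS compactness lemma}, \ref{PS compactness lemma second} and \ref{lemma three six}. The only (harmless) difference is in the borderline case $s_1=s_2$: you pull $\sup_{B_\epsilon(0)}h$ out of the H\"older estimate and use boundedness of the sequence, whereas the paper keeps $h$ inside both factors and passes to the weak-$*$ limits (\ref{Concentration compactness}), using $h(0)=\lim_{|x|\rightarrow+\infty}h(x)=0$ to cancel the atomic contributions; both arguments are valid.
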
  
\begin{proof}
 We observe that the functions $\{(u_{n},v_{n})\} \subset \mathbb{D}_{r}$ are radial and therefore we can not have concentrations at points other than $0~\text{or}~\infty$, otherwise, we will get a contradiction to the concentration–compactness principle by Bonder \cite{Bonder2018} as the set of concentration points is not a countable set.\

 Now if we want to avoid concentration at the points $0$ and $\infty$, by following the proof of Lemma \ref{PS compactness lemma} and Lemma \ref{PS compactness lemma second}, it is sufficient to show that (see (\ref{functional with first component}))
 \begin{align}\label{three three nine}
     \lim\limits_{\epsilon\rightarrow 0}\limsup\limits_{n \rightarrow +\infty}\int_{\mathbb{R}^{N}}h(x)|u_{n}|^{\alpha}|v_{n}|^{\beta}\Psi_{0,\epsilon}(x)\,\mathrm{d}x = 0,
 \end{align}
\begin{align}\label{three four zero}
     \lim\limits_{R\rightarrow +\infty}\limsup\limits_{n \rightarrow +\infty}\int_{|x|>R}h(x)|u_{n}|^{\alpha}|v_{n}|^{\beta}\Psi_{\infty,\epsilon}(x)\,\mathrm{d}x = 0,
\end{align}
where the cut-off function $\Psi_{0,\epsilon}$ is centred at $0$ satisfying (\ref{test function phi at j}) and the cut-off function $\Psi_{\infty,\epsilon}$ supported near $\infty$ satisfying (\ref{phi at infinity}). For any $s_1,s_2 \in (0,1)$, the assumption ${\alpha} +{\beta} = \min\{2_{s_{1}}^{*}, 2_{s_{2}}^{*}\}$ implies that $\frac{\alpha}{2_{s_{1}}^{*}} + \frac{\beta}{2_{s_{2}}^{*}} \leq 1$ and the equality holds if and only if $s_1 = s_2$.

If $\frac{\alpha}{2_{s_{1}}^{*}} + \frac{\beta}{2_{s_{2}}^{*}} < 1$, by using the assumption on $h$ in \eqref{H one} and the H\"older's inequality, we get
  \begin{align*}
      \int_{\mathbb{R}^{N}}h(x)|u_{n}|^{\alpha}|v_{n}|^{\beta}\Psi_{0,\epsilon}(x)\,\mathrm{d}x &= \int_{\mathbb{R}^{N}}(h(x)\Psi_{0,\epsilon}(x))^{1-\frac{\alpha}{2_{s_{1}}^{*}} - \frac{\beta}{2_{s_{2}}^{*}}} (h(x) \Psi_{0,\epsilon}(x))^{\frac{\alpha}{2_{s_{1}}^{*}} + \frac{\beta}{2_{s_{2}}^{*}} }|u_{n}|^{\alpha}|v_{n}|^{\beta}\,\mathrm{d}x\\
      &\leq \bigg( \int_{\mathbb{R}^{N}}h(x)\Psi_{0,\epsilon}(x)\,dx\bigg)^{1-\frac{\alpha}{2_{s_{1}}^{*}} - \frac{\beta}{2_{s_{2}}^{*}}}  \bigg(\int_{\mathbb{R}^{N}}h(x)|u_{n}|^{2_{s_{1}}^{*}}\Psi_{0,\epsilon}(x)\,\mathrm{d}x\bigg)^{\frac{\alpha}{2_{s_{1}}^{*}}}\\ 
      &~~~~~~~~~ \bigg(\int_{\mathbb{R}^{N}}h(x)|v_{n}|^{2_{s_{2}}^{*}}\Psi_{0,\epsilon}(x)\,\mathrm{d}x\bigg)^{\frac{\beta}{2_{s_{2}}^{*}}}.
  \end{align*}
Now from (\ref{Concentration compactness}) and (\ref{H one}), we have
\begin{align*}
   \lim\limits_{n \rightarrow +\infty} \int_{\mathbb{R}^{N}}h(x)|u_{n}|^{2_{s_{1}}^{*}}\Psi_{0,\epsilon}(x)\,\mathrm{d}x &= \int_{\mathbb{R}^{N}}h(x)|\Tilde{u}|^{2_{s_{1}}^{*}}\Psi_{0,\epsilon}(x)\,\mathrm{d}x + \rho_{0}h(0) \\
    & \leq \int_{|x|\leq \epsilon}h(x)|\Tilde{u}|^{2_{s_{1}}^{*}} \,\mathrm{d}x, ~\text{since}~h(0)=0.
\end{align*}
and 
\begin{align*}
    \lim\limits_{n \rightarrow +\infty}\int_{\mathbb{R}^{N}}h(x)|v_{n}|^{2_{s_{2}}^{*}}\Psi_{0,\epsilon}(x)\,\mathrm{d}x &= \int_{\mathbb{R}^{N}}h(x)|\Tilde{v}|^{2_{s_{2}}^{*}}\Psi_{0,\epsilon}(x)\,\mathrm{d}x + \Bar{\rho}_{0}h(0) \\
    & \leq \int_{|x|\leq \epsilon}h(x)|\Tilde{v}|^{2_{s_{2}}^{*}} \,\mathrm{d}x, ~\text{since}~h(0)=0.
\end{align*}
By Combining the above three inequalities, we have
\begin{align*}
     \lim\limits_{\epsilon\rightarrow 0}\limsup\limits_{n \rightarrow +\infty}\int_{\mathbb{R}^{N}}h(x)|u_{n}|^{\alpha}|v_{n}|^{\beta}\Psi_{0,\epsilon}(x)\,\mathrm{d}x & \leq \lim\limits_{\epsilon \rightarrow 0} \Bigg[ \bigg( \int_{|x|\leq \epsilon}h(x)\,\mathrm{d}x\bigg)^{1-\frac{\alpha}{2_{s_{1}}^{*}} - \frac{\beta}{2_{s_{2}}^{*}}}  \bigg(\int_{|x|\leq \epsilon}h(x)|\Tilde{u}|^{2_{s}^{*}} dx\bigg)^{\frac{\alpha}{2_{s}^{*}}}\\ &~~~~~~~~~ \bigg(\int_{|x|\leq \epsilon}h(x)|\Tilde{v}|^{2_{s}^{*}} \,\mathrm{d}x\bigg)^{\frac{\beta}{2_{s}^{*}}}\Bigg]\\
     &=0.
 \end{align*}
If $\frac{\alpha}{2_{s_{1}}^{*}} + \frac{\beta}{2_{s_{2}}^{*}} = 1$, by using the fact that $h \in L^{\infty}(\mathbb{R}^{N})$ and the H\"older's inequality, we get
  \begin{align}\label{Holder inequality with phi at origin}
      \int_{\mathbb{R}^{N}}h(x)|u_{n}|^{\alpha}|v_{n}|^{\beta}\Psi_{0,\epsilon}\,\mathrm{d}x \leq \bigg(\int_{\mathbb{R}^{N}}h(x)|u_{n}|^{2_{s_{1}}^{*}}\Psi_{0,\epsilon}\,\mathrm{d}x\bigg)^{\frac{\alpha}{2_{s_{1}}^{*}}} \bigg(\int_{\mathbb{R}^{N}}h(x)|v_{n}|^{2_{s_{2}}^{*}}\Psi_{0,\epsilon}\,\mathrm{d}x\bigg)^{\frac{\beta}{2_{s_{2}}^{*}}}.
  \end{align}
Again using the above approach, we get
\begin{align*}
     \lim\limits_{\epsilon\rightarrow 0}\limsup\limits_{n \rightarrow +\infty}\int_{\mathbb{R}^{N}}h(x)|u_{n}|^{\alpha}|v_{n}|^{\beta}\Psi_{0,\epsilon}(x)\,\mathrm{d}x & \leq \lim\limits_{\epsilon \rightarrow 0} \Bigg[ \bigg(\int_{|x|\leq \epsilon}h(x)|\Tilde{u}|^{2_{s}^{*}} \,\mathrm{d}x\bigg)^{\frac{\alpha}{2_{s}^{*}}} \bigg(\int_{|x|\leq \epsilon}h(x)|\Tilde{v}|^{2_{s}^{*}} \,\mathrm{d}x\bigg)^{\frac{\beta}{2_{s}^{*}}}\Bigg] =0.
 \end{align*}
Similarly, we can prove (\ref{three four zero}) by using the assumption that $\lim\limits_{|x|\rightarrow +\infty} h(x) = 0$.
\end{proof}
Next, we want to prove the Palais-Smale compactness condition for the case when $h$ is non-radial. For this purpose, we further assume that the parameter $\nu$ is sufficiently small and $ s_{1} = s_{2}=s$.
\begin{lemma}\label{critical case with h non radial}
Let us assume that $\alpha +\beta = 2_{s}^{*} $ and (\ref{H one}), and $\{(u_{n},v_{n})\}$ be a (PS) sequence in $\mathbb{D}$ for $J_{\nu}$ at level $c \in \mathbb{R}$ such that c satisfies (\ref{energy level PS}). Then, there exist $(\Tilde{u},\Tilde{v}) \in \mathbb{D} ~\text{and}~ \nu_0>0$ such that $(u_{n},v_{n}) \rightarrow (\Tilde{u},\Tilde{v}) ~\text{in}~ \mathbb{D}$ up to subsequence for every $0<\nu \leq \nu_0$.
\end{lemma}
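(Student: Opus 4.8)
The plan is to adapt the concentration--compactness argument of Lemma~\ref{PS compactness lemma} to the critical exponent $\alpha+\beta=2_{s}^{*}$ (recall $s_1=s_2=s$), the new feature being that the coupling term no longer carries the vanishing H\"older factor $(\int h\Psi)^{1-\alpha/2_{s}^{*}-\beta/2_{s}^{*}}$ exploited in the subcritical case, since now $\frac{\alpha}{2_{s}^{*}}+\frac{\beta}{2_{s}^{*}}=1$. First I would invoke Lemma~\ref{boundedness lemma} for boundedness of the $(PS)$ sequence, extract a weak limit $(\tilde u,\tilde v)$, and apply the fractional concentration--compactness principle exactly as in Lemma~\ref{PS compactness lemma}, obtaining interior atoms $\{(\mu_j,\rho_j)\}_{j}$, $\{(\bar\mu_k,\bar\rho_k)\}_{k}$ at points $\{x_j\},\{y_k\}$, the origin and infinity masses, and the inequalities \eqref{inequality with S}, \eqref{inequality with lambda}, \eqref{functional with first component at origin} and \eqref{functional with first component at infty}. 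Since $\frac{1}{\alpha+\beta}-\frac{1}{2_{s}^{*}}=0$, the energy identity \eqref{value of c} collapses to $c=\frac{s}{N}\lim_n\|(u_n,v_n)\|_{\mathbb{D}}^2$, and splitting the limiting norm through the measures writes $c$ as $\frac{s}{N}$ times the sum of non-negative contributions $\|(\tilde u,\tilde v)\|_{\mathbb{D}}^2+\sum_j\mu_j+\sum_k\bar\mu_k+(\mu_0-\lambda_1\gamma_0)+(\bar\mu_0-\lambda_2\bar\gamma_0)+(\mu_\infty-\lambda_1\gamma_\infty)+(\bar\mu_\infty-\lambda_2\bar\gamma_\infty)$.

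Next I would kill each atom. Testing $J'_\nu(u_n,v_n)$ with $(u_n\Psi_{j,\epsilon},0)$ and letting $\epsilon\to0$ gives $\mu_j\le\rho_j+\nu\alpha A_j$, where $A_j:=\lim_{\epsilon\to0}\lim_{n\to\infty}\int_{\mathbb{R}^N}h|u_n|^{\alpha}|v_n|^{\beta}\Psi_{j,\epsilon}\,dx\ge0$; H\"older with the conjugate exponents $2_{s}^{*}/\alpha$ and $2_{s}^{*}/\beta$ yields $A_j\le\|h\|_\infty\,\rho_j^{\alpha/2_{s}^{*}}\,\bar\rho(\{x_j\})^{\beta/2_{s}^{*}}$. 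If $x_j$ is a concentration point of $\{u_n\}$ only, then $\bar\rho(\{x_j\})=0$, hence $A_j=0$ and \eqref{inequality with S} forces $S\rho_j^{2/2_{s}^{*}}\le\mu_j\le\rho_j$, so $\rho_j=0$ or $\rho_j\ge S^{\frac{N}{2s}}$; in the latter case $c\ge\frac{s}{N}\mu_j\ge\frac{s}{N}S^{\frac{N}{2s}}>\frac{s}{N}\min\{S^{\frac{N}{2s}}(\lambda_1),S^{\frac{N}{2s}}(\lambda_2)\}$ (strict because $S(\lambda_i)<S$), contradicting \eqref{energy level PS}.

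For a common concentration point $x_j=y_k$ I would add the two testing relations and insert \eqref{inequality with S} together with Young's inequality $\rho_j^{\alpha/2_{s}^{*}}\bar\rho_k^{\beta/2_{s}^{*}}\le\frac{\alpha}{2_{s}^{*}}\rho_j+\frac{\beta}{2_{s}^{*}}\bar\rho_k\le\rho_j+\bar\rho_k$ and the superadditivity $\rho_j^{2/2_{s}^{*}}+\bar\rho_k^{2/2_{s}^{*}}\ge(\rho_j+\bar\rho_k)^{2/2_{s}^{*}}$; writing $t:=\rho_j+\bar\rho_k$, this produces the scalar inequality $S\,t^{\frac{N-2s}{N}}\le\big(1+\nu(\alpha+\beta)\|h\|_\infty\big)t$. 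This is exactly of the type treated in Lemma~\ref{Abdelloui fractional version} (with $C=S$, $\delta=2_{s}^{*}\ge2$), so there is $\nu_0>0$ with $t\ge(1-\epsilon)S^{\frac{N}{2s}}$ whenever $0<\nu\le\nu_0$; since $\mu_j+\bar\mu_k\ge S\,t^{\frac{N-2s}{N}}$, this forces $c\ge\frac{s}{N}(1-\epsilon)S^{\frac{N}{2s}}$, again above the threshold \eqref{energy level PS} once $\nu$ is small. The atoms at $0$ and $\infty$ are removed using the extra hypotheses in \eqref{H one}: since $h(0)=\lim_{|x|\to\infty}h(x)=0$ and $h$ is continuous there, the same H\"older estimate gives $A_0=A_\infty=0$, so \eqref{functional with first component at origin}--\eqref{functional with first component at infty} yield the dichotomies $\rho_0,\bar\rho_0,\rho_\infty,\bar\rho_\infty\in\{0\}\cup[S^{\frac{N}{2s}}(\lambda_i),\infty)$, and any nonzero value again violates \eqref{energy level PS}.

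With every atom shown to vanish, the measures reduce to $d\rho=|\tilde u|^{2_{s}^{*}}dx$ and $d\bar\rho=|\tilde v|^{2_{s}^{*}}dx$ with no loss of mass at infinity, so $u_n\to\tilde u$ in $L^{2_{s}^{*}}(\mathbb{R}^N)$ and $v_n\to\tilde v$ in $L^{2_{s}^{*}}(\mathbb{R}^N)$; feeding this into $\|(u_n-\tilde u,v_n-\tilde v)\|_{\mathbb{D}}^2=\langle J'_\nu(u_n,v_n)\mid(u_n-\tilde u,v_n-\tilde v)\rangle+o_n(1)$, exactly as at the end of Lemma~\ref{PS compactness lemma}, gives strong convergence in $\mathbb{D}$. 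I expect the genuine obstacle to be the common-point case: unlike the radial setting of Lemma~\ref{critical with h radial}, interior concentration is a priori possible here, so the whole argument turns on converting the non-vanishing coupled interaction $A_j$ into the single scalar threshold inequality above and absorbing it by the smallness of $\nu$, which is precisely where the hypothesis $0<\nu\le\nu_0$ is genuinely used.
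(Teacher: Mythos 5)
Your proposal is correct and is essentially the paper's own argument: concentration at $0$ and $\infty$ is ruled out through \eqref{H one}, points where only one component concentrates are handled by the vanishing of the coupling term there, and at common concentration points the two testing identities combined with H\"older (using $h\in L^{\infty}$), Young's inequality and superadditivity yield $S\,(\rho_{j}+\Bar{\rho}_{j})^{2/2_{s}^{*}}\le (1+2_{s}^{*}\nu \Tilde{C})(\rho_{j}+\Bar{\rho}_{j})$, whose nonzero branch contradicts the level restriction \eqref{energy level PS} once $\nu$ is small, after which strong convergence follows as in Lemma \ref{PS compactness lemma}. The only difference is presentational: you absorb the smallness of $\nu$ by invoking Lemma \ref{Abdelloui fractional version} (with $C=S$ and $\delta=2_{s}^{*}$, so that $\tfrac{\delta}{2}\cdot\tfrac{N-2s}{N}=1$), whereas the paper reads off the dichotomy $\rho_{j}+\Bar{\rho}_{j}=0$ or $\rho_{j}+\Bar{\rho}_{j}\ge \bigl(S/(1+2_{s}^{*}\nu\Tilde{C})\bigr)^{N/2s}$ directly from the scalar inequality; both give the same threshold.
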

\begin{proof}
First, we observe that the concentrations at points $0, \infty$ can be excluded due to \eqref{three three nine} and \eqref{three four zero} given in proof of Lemma \ref{critical with h radial}. Therefore, we have only to take care of concentration points $x_{j} \neq 0,\infty$. Furthermore, we can also assume that the index $j \in \mathcal{J} \cap \mathcal{K}$. On contrary suppose that the concentration occurs at $x_{j} \in \mathbb{R}^{N}$ with $j \in \mathcal{J} \cap \mathcal{K}^c$ or $x_{k} \in \mathbb{R}^{N}$ with $k \in \mathcal{K} \cap  \mathcal{J}^c$, then it is easy to prove as done before that 
\begin{align*}
     \lim\limits_{\epsilon\rightarrow 0}\limsup\limits_{n \rightarrow +\infty}\int_{\mathbb{R}^{N}}h(x)|u_{n}|^{\alpha}|v_{n}|^{\beta}\Psi_{j,\epsilon}\,\mathrm{d}x = 0,
 \end{align*}
 for a smooth cut-off function $\Psi_{j,\epsilon}$ centred at $x_{j}$ satisfying (\ref{test function phi at j}). Thus, this concludes that concentrations can not occur at $x_{j} \in \mathbb{R}^{N}$ with $j \in \mathcal{J} \cap \mathcal{K}^c$ or $x_{k} \in \mathbb{R}^{N}$ with $k \in \mathcal{K} \cap  \mathcal{J}^c$.
 
 Now, we test the functional $J'_{\nu}(u_{n},v_{n})$ with $(u_{n}\Psi_{j,\epsilon}, 0)$ with the assumption that $j \in \mathcal{J}\cap\mathcal{K}$ and we obtain

\begin{align} \label{three four two}
       0 &= \lim_{n \rightarrow +\infty} \big\langle J'_{\nu}(u_{n},v_{n})| (u_{n}\Psi_{j,\epsilon}, 0)\big\rangle \notag\\
       &= \lim_{n \rightarrow +\infty} \bigg( \iint_{\mathbb{R}^{2N} } \frac{|u_{n}(x)-u_{n}(y)|^{2}}{|x-y|^{N+2s}} \Psi_{j,\epsilon}(x, \mathrm{d}x \mathrm{d}y
       + \notag\\ &\quad+\iint_{\mathbb{R}^{2N} } \frac{(u_{n}(x)-u_{n}(y))(\Psi_{j,\epsilon}(x)-\Psi_{j,\epsilon}(y))}{|x-y|^{N+2s}} u_{n}(y)\,\mathrm{d}x \mathrm{d}y  -\lambda_{1} \int_{\mathbb{R}^{N}}\frac{u_{n}^{2}}{|x|^{2s}}\Psi_{j,\epsilon}(x)\, \mathrm{d}x -\notag\\ &\quad- \int_{\mathbb{R}^{N}} |u_{n}|^{2_{s}^{*}}\Psi_{j,\epsilon}(x)\,\mathrm{d}x - \nu \alpha \int_{\mathbb{R}^{N}} h(x)|u_{n}|^{\alpha}|v_{n}|^{\beta}\Psi_{j,\epsilon}(x)\,\mathrm{d}x \bigg)\notag\\
       &= \int_{\mathbb{R}^{N}} \Psi_{j,\epsilon}\, \mathrm{d}\mu - \lambda_{1} \int_{\mathbb{R}^{N}}\Psi_{j,\epsilon}\, \mathrm{d}\gamma - \int_{\mathbb{R}^{N}}\Psi_{j,\epsilon}\, \mathrm{d}\rho - \nu \alpha \lim_{n \rightarrow + \infty}  \int_{\mathbb{R}^{N}} h(x)|u_{n}|^{\alpha}|v_{n}|^{\beta}\Psi_{j,\epsilon}(x)\,\mathrm{d}x. 
\end{align} 
Further, we test the functional $J'_{\nu}(u_{n},v_{n})$ with $(0,v_{n}\Psi_{j,\epsilon})$ and we have the following
\begin{align} \label{three four three}
       0 &= \lim_{n \rightarrow +\infty} \big<\langle J'_{\nu}(u_{n},v_{n})| (0,v_{n}\Psi_{j,\epsilon})\big\rangle \notag\\
       &= \lim_{n \rightarrow +\infty} \bigg( \iint_{\mathbb{R}^{2N} } \frac{|v_{n}(x)-v_{n}(y)|^{2}}{|x-y|^{N+2s}} \Psi_{j,\epsilon}(x)\,\mathrm{d}x \mathrm{d}y
       +\\ &\quad +\iint_{\mathbb{R}^{2N} } \frac{(v_{n}(x)-v_{n}(y))(\Psi_{j,\epsilon}(x)-\Psi_{j,\epsilon}(y))}{|x-y|^{N+2s}} v_{n}(y)\,\mathrm{d}x \mathrm{d}y -\lambda_{1} \int_{\mathbb{R}^{N}}\frac{v_{n}^{2}}{|x|^{2s}}\Psi_{j,\epsilon}(x)\, \mathrm{d}x -\notag\\&\quad- \int_{\mathbb{R}^{N}} |v_{n}|^{2_{s}^{*}}\Psi_{j,\epsilon}(x)\,\mathrm{d}x - \nu \alpha \int_{\mathbb{R}^{N}} h(x)|u_{n}|^{\alpha}|v_{n}|^{\beta}\Psi_{j,\epsilon}(x)\,\mathrm{d}x \bigg) \notag\\
       &= \int_{\mathbb{R}^{N}} \Psi_{j,\epsilon}\,\mathrm{d}\Bar{\mu} - \lambda_{1} \int_{\mathbb{R}^{N}}\Psi_{j,\epsilon}\, \mathrm{d}\Bar{\gamma} - \int_{\mathbb{R}^{N}}\Psi_{j,\epsilon}\, \mathrm{d}\Bar{\rho} - \nu \alpha \lim_{n \rightarrow + \infty}  \int_{\mathbb{R}^{N}} h(x)|u_{n}|^{\alpha}|v_{n}|^{\beta}\Psi_{j,\epsilon}(x)\,\mathrm{d}x.
\end{align} 
By assumption $h \in L^{\infty}(\mathbb{R}^{N})$ and using the H\"{o}lder inequality (\ref{Holder inequality with phi at origin}), there exists some constant $\Tilde{C}>0$ such that the following inequality holds.
\begin{align}\label{three four four}
     \lim\limits_{\epsilon\rightarrow 0}\limsup\limits_{n \rightarrow +\infty}\int_{\mathbb{R}^{N}}h(x)|u_{n}|^{\alpha}|v_{n}|^{\beta}\Psi_{j,\epsilon}\,\mathrm{d}x \leq \Tilde{C}\rho_{j}^{\frac{\alpha}{2_{s}^{*}}} \Bar{\rho_{j}}^{{\frac{\beta}{2_{s}^{*}}}}.
 \end{align}
 Hence, by letting $\epsilon \rightarrow 0$, from (\ref{three four two}), (\ref{three four three}) and (\ref{three four four}) we get
 \begin{align} \label{368}
     \mu_{j} - \rho_{j} - \nu \alpha \Tilde{C}\rho_{j}^{\frac{\alpha}{2_{s}^{*}}} \Bar{\rho_{j}}^{{\frac{\beta}{2_{s}^{*}}}} \leq 0,\end{align}
     \begin{align} \label{369}
     \Bar{\mu}_{j} - \Bar{\rho}_{j} - \nu \beta \Tilde{C}\rho_{j}^{\frac{\alpha}{2_{s}^{*}}} \Bar{\rho_{j}}^{{\frac{\beta}{2_{s}^{*}}}} \leq 0.
 \end{align}
 Then, by (\ref{inequality with S}) together with \eqref{368} and \eqref{369}, we find
 \begin{align*}
     S\bigg(\rho_{j}^{\frac{2}{2_{s}^{*}}} + \Bar{\rho}_{j}^{\frac{2}{2_{s}^{*}}}\bigg) \leq \rho_{j} + \Bar{\rho_{j}} + 2_{s}^{*}\nu \Tilde{C}\rho_{j}^{\frac{\alpha}{2_{s}^{*}}} \Bar{\rho_{j}}^{{\frac{\beta}{2_{s}^{*}}}}.
 \end{align*}
 Therefore,
 $$S\bigg( \rho_{j} + \Bar{\rho}_{j}\bigg)^{\frac{2}{2_{s}^{*}}} \leq (\rho_{j} + \Bar{\rho}_{j}) (1+ 2_{s}^{*}\nu \Tilde{C}).$$
consequently, we obtain that either $\rho_{j} + \Bar{\rho}_{j} = 0$ or $\rho_{j} + \Bar{\rho}_{j} \geq \bigg( \frac{S}{1+ 2_{s}^{*}\nu \Tilde{C}}\bigg)^{\frac{N}{2s}}$. If we have a concentration at some point $x_j$, then following the arguments of Lemma \ref{PS compactness lemma} we get
\begin{align*}
    c &\geq \bigg(\frac{1}{2}  -\frac{1}{\alpha+\beta} \bigg) \bigg(\mu_{j}+\Bar{\mu}_{j}\bigg) + \bigg(\frac{1}{\alpha+\beta} - \frac{1}{2_{s}^{*}}  \bigg)(\rho_{j}+\Bar{\rho}_{j})\\
    & \geq S \bigg(\frac{1}{2}  -\frac{1}{\alpha+\beta} \bigg) \bigg( \rho_{j} + \Bar{\rho}_{j}\bigg)^{\frac{2}{2_{s}^{*}}} + \bigg(\frac{1}{\alpha+\beta} - \frac{1}{2_{s}^{*}}  \bigg)(\rho_{j}+\Bar{\rho}_{j}) \\
    & \geq \frac{s}{N} \bigg( \frac{S}{1+ 2_{s}^{*}\nu \Tilde{C}}\bigg)^{\frac{N}{2s}} 
\end{align*}
If we assume that $\nu >0$ is sufficiently small, then
\begin{align*}
    c \geq \frac{s}{N} \bigg( \frac{S}{1+ 2_{s}^{*}\nu \Tilde{C}}\bigg)^{\frac{N}{2s}} \geq \frac{s}{N} \min \{ S(\lambda_{1}), S(\lambda_{2})\}^{\frac{N}{2s}},
\end{align*}
which gives a contradiction to the hypothesis on level $c$. This implies that $ \rho_{j} = 0 =\Bar{\rho}_{j}$. Hence, the result follows from \eqref{three four four}.
\end{proof}

\section{Existence of ground state and bound state solutions}\label{S4}
In this section, we will state and prove the main results of the article concerning the positive ground and bound state solutions of the system \eqref{main problem}. We will use the following hypotheses to prove the main results,
\begin{equation} \label{C}
\begin{split}
     & \text{Either} ~2<\alpha+\beta<\min\{2_{s_{1}}^{*},2_{s_{2}}^{*}\}~\text{and}~h~\text{satisfies}~(\ref{condition on h})\\
    &~~~~~~~~~~~~~~~~~~ \text{or} \\
    & {\alpha} +{\beta} = \min\{2_{s_{1}}^{*}, 2_{s_{2}}^{*}\}~\text{and}~h~\text{is radial and satisfies}~(\ref{H one})   \hspace{1cm}
\end{split} 
\end{equation}
In the following theorem, we will prove the existence of a positive ground state solution of (\ref{main problem}) for $\nu$ large enough and $h$ satisfying the assumption (\ref{C}).
\begin{theorem} \label{First theorem}
If the hypothesis (\ref{C}) is satisfied, then a positive ground state solution exists to the system (\ref{main problem}) for $\nu$ sufficiently large.
\end{theorem}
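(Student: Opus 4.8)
The plan is to realize the ground state as a minimizer of the energy over the Nehari manifold and to show that, for $\nu$ large, the minimal level drops below the compactness threshold in \eqref{energy level PS}. To keep the sign of the components under control I would work with the modified functional $J_\nu^{+}$ of \eqref{functional associated with modified problem} and set
\[
c_\nu := \inf_{\mathcal{N}_\nu^{+}} J_\nu^{+}.
\]
From \eqref{ norm equal  r} together with \eqref{restricted functonal on nehari manifold of modified problem} one reads off that $J_\nu^{+}$ is bounded below on $\mathcal{N}_\nu^{+}$ and that $c_\nu>0$. Applying Ekeland's variational principle to $J_\nu^{+}|_{\mathcal{N}_\nu^{+}}$ produces a minimizing sequence that is a Palais--Smale sequence for the restricted functional at level $c_\nu$, and by (the $J_\nu^{+}$-analogue of) Lemma \ref{equivalent of critical points lemma} this is a genuine bounded (PS) sequence for $J_\nu^{+}$ in $\mathbb{D}$.

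The heart of the argument is the estimate $c_\nu\to 0$ as $\nu\to+\infty$. Fix once and for all a pair $(u_0,v_0)\in\mathbb{D}$ with $u_0,v_0\ge 0$, $u_0,v_0\not\equiv 0$ and $\int_{\mathbb{R}^{N}}h\,u_0^{\alpha}v_0^{\beta}\,\mathrm{d}x>0$, which is possible since $h>0$. For each $\nu$ let $\tau_\nu>0$ be the unique Nehari projection solving \eqref{Algebraic equation}, so $(\tau_\nu u_0,\tau_\nu v_0)\in\mathcal{N}_\nu^{+}$. Because \eqref{ alpha beta condition} gives $\alpha+\beta-2\le 2_{s_i}^{*}-2$, the coupling term carries the smallest power of $\tau$ in \eqref{Algebraic equation}; balancing the fixed left-hand side against it forces $\tau_\nu\to 0$ while keeping $\nu\,\tau_\nu^{\alpha+\beta-2}$ bounded. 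Substituting into \eqref{energy functional on Nehari manifold} and using $\nu\,\tau_\nu^{\alpha+\beta}=(\nu\,\tau_\nu^{\alpha+\beta-2})\,\tau_\nu^{2}\to 0$ yields $J_\nu^{+}(\tau_\nu u_0,\tau_\nu v_0)\to 0$, whence $c_\nu\to 0$. In particular, for $\nu$ sufficiently large, $c_\nu$ lies strictly below the threshold in \eqref{energy level PS} and strictly below both semi-trivial levels $\tfrac{s_1}{N}S^{\frac{N}{2s_1}}(\lambda_1)$ and $\tfrac{s_2}{N}S^{\frac{N}{2s_2}}(\lambda_2)$.

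With $c_\nu$ below \eqref{energy level PS}, compactness follows: in the subcritical regime $2<\alpha+\beta<\min\{2_{s_1}^{*},2_{s_2}^{*}\}$ I invoke Lemma \ref{PS compactness lemma}, while in the critical regime, where $h$ is radial by \eqref{C}, I restrict the minimization to $\mathbb{D}_r$ and invoke Lemma \ref{critical with h radial}(i), recovering a critical point of the full functional through the principle of symmetric criticality. Either way the (PS) sequence converges strongly to some $(\tilde u,\tilde v)$ with $J_\nu^{+}(\tilde u,\tilde v)=c_\nu$ and $(J_\nu^{+})'(\tilde u,\tilde v)=0$. Testing this identity against $(\tilde u^{-},0)$ and $(0,\tilde v^{-})$ exactly as in the opening computation of Lemma \ref{PS compactness lemma second} gives $\tilde u^{-}=\tilde v^{-}=0$, so $\tilde u,\tilde v\ge 0$ and $(\tilde u,\tilde v)$ in fact solves the original system \eqref{main problem}; the strong maximum principle for the fractional Laplacian then upgrades this to $\tilde u,\tilde v>0$ in $\mathbb{R}^{N}\setminus\{0\}$.

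The main obstacle, and the step demanding genuine care, is ruling out that the limit degenerates to a semi-trivial pair. Any non-trivial semi-trivial critical point $(\tilde u,0)$ or $(0,\tilde v)$ solves a single doubly critical equation and hence, by \eqref{extremal value at norms}, has energy exactly $\tfrac{s_1}{N}S^{\frac{N}{2s_1}}(\lambda_1)$ or $\tfrac{s_2}{N}S^{\frac{N}{2s_2}}(\lambda_2)$; since $c_\nu$ is strictly below both for $\nu$ large, both components of $(\tilde u,\tilde v)$ must be non-trivial. Finally, as every non-trivial critical point of $J_\nu$ lies on $\mathcal{N}_\nu$ and $J_\nu=J_\nu^{+}$ on non-negative pairs, the value $c_\nu$ coincides with the ground-state level \eqref{ground state level}, so $(\tilde u,\tilde v)$ is the desired positive ground-state solution.
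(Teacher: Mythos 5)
Your overall strategy is the one the paper itself uses: fix a coupled pair, show via the Nehari projection equation \eqref{Algebraic equation} that $\tau_\nu\to 0$ with $\nu\tau_\nu^{\alpha+\beta-2}$ bounded, deduce that the Nehari level tends to $0$ as $\nu\to+\infty$ and hence drops below the threshold \eqref{energy level PS}; then invoke Lemma \ref{PS compactness lemma} (subcritical) or Lemma \ref{critical with h radial}(i) (critical, $h$ radial), exclude semi-trivial limits by the strict energy gap, and finish with the maximum principle. Your variations — working with $J_\nu^{+}$ and killing negative parts by testing, and making Ekeland explicit — are cosmetic; the paper instead minimizes $J_\nu$ on $\mathcal{N}_\nu$ and restores non-negativity afterwards through a fiber-map/absolute-value argument.

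There is, however, one genuine gap, precisely at the point where your route differs from the paper's. You set $c_\nu=\inf_{\mathcal{N}_\nu^{+}}J_\nu^{+}$ and then claim that $c_\nu$ coincides with the ground-state level \eqref{ground state level} ``as every non-trivial critical point of $J_\nu$ lies on $\mathcal{N}_\nu$ and $J_\nu=J_\nu^{+}$ on non-negative pairs.'' This does not cover \emph{sign-changing} critical points of $J_\nu$: such a point lies on $\mathcal{N}_\nu$ but in general not on $\mathcal{N}_\nu^{+}$ (the two Nehari constraints differ whenever $u^{-}\not\equiv 0$ or $v^{-}\not\equiv 0$), and the pointwise comparison between the two functionals goes the wrong way, since $J_\nu\leq J_\nu^{+}$ on $\mathbb{D}$. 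So nothing you wrote rules out a sign-changing bound state with energy strictly below $c_\nu$, and the conclusion ``ground state'' is unproved as stated. The repair is exactly the computation the paper performs (for a different purpose) in its proof: if $(u,v)$ is any non-trivial critical point of $J_\nu$, then $(u,v)\in\mathcal{N}_\nu$, so by the fiber-map analysis (the function $\psi$ in the paper's proof, whose derivative has a unique positive zero) $J_\nu(u,v)=\max_{t>0}J_\nu(tu,tv)$; letting $t_2>0$ be the Nehari projection of $(|u|,|v|)$ and using that the Gagliardo seminorms do not increase under $u\mapsto|u|$ while every other term of $J_\nu$ depends only on $|u|,|v|$, one gets
\[
J_\nu(u,v)\;\geq\; J_\nu(t_2u,t_2v)\;\geq\; J_\nu\bigl(t_2|u|,t_2|v|\bigr)\;=\;J_\nu^{+}\bigl(t_2|u|,t_2|v|\bigr)\;\geq\; c_\nu,
\]
because $(t_2|u|,t_2|v|)\in\mathcal{N}_\nu^{+}$. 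With this chain your limit $(\tilde u,\tilde v)$ is indeed a ground state; without it, your argument only yields a positive bound state at level $c_\nu$. (The paper avoids the issue structurally: minimizing $J_\nu$ over $\mathcal{N}_\nu$ makes $\inf_{\mathcal{N}_\nu}J_\nu$ an automatic lower bound for all bound-state energies.) A secondary remark: in the critical case your restriction to $\mathbb{D}_r$ plus symmetric criticality produces a critical point at the \emph{radial} Nehari level, which a priori exceeds the unrestricted one, so the same identification problem reappears there; the paper is equally silent on this point, so I would not count it against you beyond noting it.
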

\begin{proof}
We know that for any $(u,v) \in \mathbb{D}\backslash \{(0,0)\}$, there exists a unique $t = t_{(u,v)}>0$ such that $(tu,tv) \in \mathcal{N}_{\nu}$ and satisfying the algebraic equation
\begin{align*}
     \|(u,v)\|_{\mathbb{D}}^{2} =  t^{2_{s_{1}}^{*} -2} \|u\|_{{2_{s_{1}}^{*}}}^{2_{s_{1}}^{*}} +  t^{2_{s_{2}}^{*} -2} \|v\|_{{2_{s_{2}}^{*}}}^{2_{s_{2}}^{*}} + \nu (\alpha+\beta ) t^{\alpha+ \beta -2} \int_{\mathbb{R}^{N}} h(x)|u|^{\alpha}|v|^{\beta}\,\mathrm{d}x.\\
      t^{\alpha+ \beta -2} = \frac{ \|(u,v)\|_{\mathbb{D}}^{2} -  t^{2_{s_{1}}^{*} -2} \|u\|_{{2_{s_{1}}^{*}}}^{2_{s_{1}}^{*}} -  t^{2_{s_{2}}^{*} -2} \|v\|_{{2_{s_{2}}^{*}}}^{2_{s_{2}}^{*}}}{\nu (\alpha+\beta )\int_{\mathbb{R}^{N}} h(x)|u|^{\alpha}|v|^{\beta}\,\mathrm{d}x}, \hspace{4.4cm}
\end{align*}
Clearly $t = t_{\nu}$ tends to $0$ as $\nu$ increases to $+\infty$ by using the fact $\alpha+\beta>2$. Furthermore, we have
\begin{align*}
      \lim\limits_{\nu \rightarrow +\infty}t_{\nu}^{\alpha+ \beta -2}\nu &=\lim\limits_{\nu \rightarrow +\infty} \frac{ \|(u,v)\|_{\mathbb{D}}^{2} -  t_{\nu}^{2_{s_{1}}^{*} -2} \|u\|_{{2_{s_{1}}^{*}}}^{2_{s_{1}}^{*}} -  t_{\nu}^{2_{s_{2}}^{*} -2} \|v\|_{{2_{s_{2}}^{*}}}^{2_{s_{2}}^{*}}}{(\alpha+\beta )\int_{\mathbb{R}^{N}} h(x)|u|^{\alpha}|v|^{\beta}\,\mathrm{d}x},\\
      \lim\limits_{\nu \rightarrow +\infty}t_{\nu}^{\alpha+ \beta -2}\nu &= \frac{ \|(u,v)\|_{\mathbb{D}}^{2}}{(\alpha+\beta )\int_{\mathbb{R}^{N}} h(x)|u|^{\alpha}|v|^{\beta}\, \mathrm{d}x},~\text{since}~t_{\nu} \rightarrow 0~\text{as}~\nu \rightarrow +\infty .
\end{align*}
We also have the following,
\begin{align*}
    J_{\nu}(t_{\nu}u,t_{\nu}v) &= \frac{1}{2}t_{\nu}^{2} \|(u,v)\|_{\mathbb{D}}^{2} -\frac{t_{\nu}^{2_{s_{1}}^{*}}}{2_{s_{1}}^{*}}\int_{\mathbb{R}^{N}} |u|^{2_{s_{1}}^{*}}\,\mathrm{d}x 
    -\frac{t_{\nu}^{2_{s_{2}}^{*}}}{2_{s_{2}}^{*}}\int_{\mathbb{R}^{N}} |v|^{2_{s_{2}}^{*}}\,\mathrm{d}x - \nu t_{\nu}^{\alpha+\beta} \int_{\mathbb{R}^{N}}h(x)|u|^{\alpha}|v|^{\beta} \,\mathrm{d}x.\\
    &= \frac{1}{2}t_{\nu}^{2} \|(u,v)\|_{\mathbb{D}}^{2} -\frac{t_{\nu}^{2_{s_{1}}^{*}}}{2_{s_{1}}^{*}}\int_{\mathbb{R}^{N}} |u|^{2_{s_{1}}^{*}}\, \mathrm{d}x 
    -\frac{t_{\nu}^{2_{s_{2}}^{*}}}{2_{s_{2}}^{*}}\int_{\mathbb{R}^{N}} |v|^{2_{s_{2}}^{*}}\, \mathrm{d}x - \frac{1}{\alpha+\beta}t_{\nu}^{2} \|(u,v)\|_{\mathbb{D}}^{2} + \\ &\quad +\frac{t_{\nu}^{2_{s_{1}}^{*}}}{\alpha+\beta}\int_{\mathbb{R}^{N}} |u|^{2_{s_{1}}^{*}}\, \mathrm{d}x + \frac{t_{\nu}^{2_{s_{2}}^{*}}}{\alpha+\beta}\int_{\mathbb{R}^{N}} |v|^{2_{s_{2}}^{*}}\, \mathrm{d}x\\
    &= \bigg( \frac{1}{2} - \frac{1}{\alpha+\beta}\bigg)t_{\nu}^{2} \|(u,v)\|_{\mathbb{D}}^{2} + \\&\quad + \bigg( \frac{1}{\alpha+\beta}- \frac{1}{2_{s_{1}}^{*}}\bigg)t_{\nu}^{2_{s_{1}}^{*}}\int_{\mathbb{R}^{N}} |u|^{2_{s_{1}}^{*}}\, \mathrm{d}x + \bigg( \frac{1}{\alpha+\beta}- \frac{1}{2_{s_{2}}^{*}}\bigg)t_{\nu}^{2_{s_{2}}^{*}}\int_{\mathbb{R}^{N}} |v|^{2_{s_{2}}^{*}}\, \mathrm{d}x\\
    &= \bigg( \frac{1}{2} - \frac{1}{\alpha+\beta} + o_\nu(1) \bigg)t_{\nu}^{2} \|(u,v)\|_{\mathbb{D}}^{2}.
\end{align*}
The last expression is due to the fact that
\begin{align*}
    A = \frac{t_{\nu}^{2_{s_{1}}^{*}-2}}{\|(u,v)\|_{\mathbb{D}}^{2}} \bigg( \frac{1}{\alpha+\beta}- \frac{1}{2_{s_{1}}^{*}}\bigg)\int_{\mathbb{R}^{N}} |u|^{2_{s_{1}}^{*}}\,\mathrm{d}x + \frac{t_{\nu}^{2_{s_{2}}^{*}-2}}{\|(u,v)\|_{\mathbb{D}}^{2}} \bigg( \frac{1}{\alpha+\beta}- \frac{1}{2_{s_{2}}^{*}}\bigg)\int_{\mathbb{R}^{N}} |v|^{2_{s_{2}}^{*}}\,\mathrm{d}x = o_\nu(1),\end{align*} as $~\nu~\rightarrow +\infty.$

Hence, $J_{\nu}(t_{\nu}u,t_{\nu}v) = o(1)$ as $\nu \rightarrow +\infty$ and there exists a $\nu_0>0$ such that, if $\nu > \nu_0$, where $\nu_0$ is sufficiently large, then
\begin{align*} 
        \Tilde{c}_{\nu} &= \inf\limits_{(u,v) \in \mathcal{N}_{\nu}} J_{\nu}(u_{},v_{}) \leq J_{\nu}(t_{\nu}u,t_{\nu}v) < \min\bigg\{ J_{\nu}(z_{\mu,s_{1}}^{\lambda_{1}},0), J_{\nu}(0,z_{\mu,s_{2}}^{\lambda_{2}}) \bigg\} 
\end{align*} and hence from above we get the following inequality
\begin{align} \label{four one}
        \Tilde{c}_{\nu} < \min\bigg\{\frac{s_{1}}{N}S^{\frac{N}{2s_{1}}}(\lambda_{1}), \frac{s_{2}}{N}S^{\frac{N}{2s_{2}}}(\lambda_{2})\bigg\}.
\end{align}
For the case $\alpha+\beta < \min\{2_{s_{1}}^{*},2_{s_{2}}^{*} \}$, we use Lemma \ref{PS compactness lemma} to ensure the existence of $(\Tilde{u},\Tilde{v}) \in \mathbb{D}$ such that $\Tilde{c}_{\nu} = J_{\nu}(\Tilde{u},\Tilde{v})$. Next, we will show that the functions $\Tilde{u}$ and $\Tilde{v}$ are indeed positive. For that purpose, we define the function $\psi(t): (0, \infty) \rightarrow \mathbb{R}$ given by $\psi(t) = J_{\nu}(tu,tv),$ for all $t>0.$ Then
\begin{align*}    
    \psi(t) &= \frac{A_1}{2}t^2 - \frac{A_2}{2^*_{s_{1}}}t^{2^*_{s_{1}}} - \frac{A_3}{2^*_{s_{2}}}t^{2^*_{s_{2}}} - A_4\nu t^{\alpha + \beta},\\
    \psi'(t) &= A_1t - A_2t^{2^*_{s_{1}}-1} - A_3 t^{2^*_{s_{2}}-1} - A_4\nu (\alpha +\beta)t^{\alpha +\beta-1},\\
    \psi''(t) &= A_1 - A_2(2^{*}_{s_{1}}-1)t^{2^*_{s_{1}}-2} - A_3(2^*_{s_{2}}-1) t^{2^*_{s_{2}}-2} - A_4\nu (\alpha +\beta)(\alpha +\beta-1)t^{\alpha +\beta-2},\\
    \psi'''(t) &= - A_2(2^*_{s_{1}}-1)(2^*_{s_{1}}-2)t^{2^*_{s_{1}}-3} - A_3(2^*_{s_{2}}-1)(2^*_{s_{2}}-2) t^{2^*_{s_{2}}-3}\\
    & \hspace{5cm}- A_4\nu (\alpha +\beta)(\alpha +\beta-1)(\alpha +\beta-2)t^{\alpha +\beta-3},
\end{align*}
where $A_1 = \| (u,v)\|_{\mathbb{D}}^{2},~A_2= \| u\|_{2^*_{s_{1}}}^{2^*_{s_{1}}},~A_3= \| v\|_{2^*_{s_{2}}}^{2^*_{s_{2}}},~A_4 = \int_{\mathbb{R}^{N}}h(x)|u|^{\alpha}|v|^{\beta}\,\,\mathrm{d}x.$ Clearly, $A_1,A_2,A_3~\text{and}~A_4$ are non-negative numbers. The condition $\alpha + \beta >2$ implies that $\psi'''(t)<0, \forall t>0$. Therefore the function $\psi'(t)$ is strictly concave for $t>0$. Also, we have $\lim\limits_{t \rightarrow 0}\psi'(t) = 0$ and $\lim\limits_{t \rightarrow +\infty}\psi'(t) = - \infty$. Moreover, the function $\psi'(t)>0$ for $t>0$ small enough. Hence, $\psi'(t)$ has a unique global maximum point at $t=t_0$ and $\psi'(t)$ has a unique root at $t_1 > t_0$ and $\psi''(t)<0$ for $t>t_0$, in particular $\psi''(t_1)<0$. Also, from the equation \eqref{second order derivative} and $\psi(t)$, we observe that $(tu,tv) \in \mathcal{N}_\nu$ if and only if $\psi''(t)<0$.\\
Now we consider the function $(|\Tilde{u}|,|\Tilde{v}|) \in \mathbb{D}$, then from the above arguments there exists a unique $t_2>0$ such that $t_2(|\Tilde{u}|,|\Tilde{v}|) = (t_2|\Tilde{u}|,t_2|\Tilde{v}|) \in \mathcal{N}_{\nu}$ and $t_2$ satisfies the following algebraic equation

\begin{align*} 
     \|(|\Tilde{u}|,|\Tilde{v}|)\|_{\mathbb{D}}^{2} =  t_2^{2_{s_{1}}^{*} -2} \|\Tilde{u}\|_{{2_{s_{1}}^{*}}}^{2_{s_{1}}^{*}} +  t_2^{2_{s_{2}}^{*} -2} \|\Tilde{v}\|_{{2_{s_{2}}^{*}}}^{2_{s_{2}}^{*}} + \nu (\alpha+\beta ) t_2^{\alpha+ \beta -2} \int_{\mathbb{R}^{N}} h(x)|\Tilde{u}|^{\alpha}|\Tilde{v}|^{\beta}\,\mathrm{d}x.
\end{align*}
Also we know that $(\Tilde{u},\Tilde{v}) \in \mathcal{N}_{\nu}$, therefore 
\begin{align*}
\begin{split}
    \|(\Tilde{u},\Tilde{v})\|_{\mathbb{D}}^{2} 
    = \|\Tilde{u}\|_{{2_{s_{1}}^{*}}}^{2_{s_{1}}^{*}} + \|\Tilde{v}\|_{{2_{s_{2}}^{*}}}^{2_{s_{2}}^{*}} + \nu (\alpha + \beta) \int_{\mathbb{R}^{N}} h(x)|\Tilde{u}|^{\alpha}|\Tilde{v}|^{\beta}\, \mathrm{d}x.
\end{split}
\end{align*}
Now from the inequality $\|(|\Tilde{u}|,|\Tilde{v}|)\|_{\mathbb{D}}^{} \leq  \|(\Tilde{u},\Tilde{v})\|_{\mathbb{D}}^{}$, one finds that $t_2 \leq 1.$ Since $(\Tilde{u},\Tilde{v}) \in \mathcal{N}_{\nu}$ and $(\Tilde{u},\Tilde{v})$ is the unique maximum point of $\psi(t) = J_{\nu}(t\Tilde{u},t\Tilde{v}), \forall ~t>t_0$. We can deduce that
\[\Tilde{c}_{\nu} = J_{\nu}(\Tilde{u},\Tilde{v}) = \max\limits_{t>t_0}J_{\nu}(t\Tilde{u},t\Tilde{v}) \geq J_{\nu}(t_2\Tilde{u},t_2\Tilde{v}) \geq J_{\nu}(t_2|\Tilde{u}|,t_2|\Tilde{v}|) \geq \Tilde{c}_{\nu}. \]  
Thus, we can assume that $\Tilde{u}\geq 0$ and $\Tilde{v}\geq 0$ in $\mathbb{R}^{N}$. Moreover, $\Tilde{u}, \Tilde{v}$ are not identically equal to $0$. On contrary, if we suppose that $\Tilde{u} \equiv 0$, then $\Tilde{v}$ is a solution to the problem (\ref{PDE with u zero}) and further $\Tilde{v} = z_{\mu,s_{2}}^{\lambda_{2}}$, which is not possible due to the inequality (\ref{four one}). Following the same argument for $\Tilde{v} \equiv 0$, we get a contradiction to (\ref{four one}).
Furthermore, we conclude that $\Tilde{u}>0$ and $\Tilde{v}>0$ using the maximum principle in $\mathbb{R}^{N} \backslash\{0\}$ \cite[Theorem 1.2]{Pezzo2017}. Hence, the existence of a positive ground state solution $(\Tilde{u},\Tilde{v}) \in \mathcal{N}_{\nu}$ is proved.\

In the case of $\alpha+\beta = \min\{2_{s_{1}}^{*},2_{s_{2}}^{*} \}$, we follow the same argument as in the subcritical case and part (i) of Lemma \ref{critical with h radial} to conclude the existence of a positive ground state $(\Tilde{u},\Tilde{v})$.
\end{proof}

Next, we prove the existence of positive ground state solutions based on the order relation between $S^{\frac{N}{2s_{2}}}(\lambda_{2})$ and $S^{\frac{N}{2s_{1}}}(\lambda_{1})$, and considering the case $\alpha \leq 2 $ or $\beta \leq 2$.
\begin{theorem} \label{second theorem}
Assume (\ref{C}). The system (\ref{main problem}) admits a positive ground state $(\Tilde{u}, \Tilde{v}) \in \mathbb{D}$ under one of the following hypotheses:
\begin{itemize}
    \item [(i)] $S^{\frac{N}{2s_{2}}}(\lambda_{2}) \geq S^{\frac{N}{2s_{1}}}(\lambda_{1}), s_{2} \geq s_{1}$ and either $\beta<2$ or $\beta =2$ and $\nu$ large enough,
    \item [(ii)] $S^{\frac{N}{2s_{1}}}(\lambda_{1}) \geq S^{\frac{N}{2s_{2}}}(\lambda_{2}),s_{1} \geq s_{2}$ and either $\alpha<2$ or $\alpha =2$ and $\nu$ large enough.
\end{itemize}
\end{theorem}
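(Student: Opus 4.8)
The plan is to obtain the ground state as a minimizer of $J_\nu$ over the Nehari manifold $\mathcal{N}_\nu$, exactly as in the proof of Theorem \ref{First theorem}; the only new point is that the strict energy gap below the compactness threshold (\ref{energy level PS}) is now produced by the saddle-point geometry of the semi-trivial solutions recorded in Proposition \ref{Semitrivial solution as a minimizer}, rather than by taking $\nu$ large. I treat case (i) in detail and note that case (ii) is completely symmetric. Set
\[
\Tilde{c}_\nu = \inf_{(u,v)\in\mathcal{N}_\nu} J_\nu(u,v),
\]
which is finite and positive since $J_\nu|_{\mathcal{N}_\nu}$ is bounded below by a positive constant (Section \ref{S2}).

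First I would identify the threshold. Under the hypotheses of (i), namely $s_2\ge s_1$ and $S^{\frac{N}{2s_2}}(\lambda_2)\ge S^{\frac{N}{2s_1}}(\lambda_1)$, one has
\[
\frac{s_2}{N}S^{\frac{N}{2s_2}}(\lambda_2)\ \ge\ \frac{s_1}{N}S^{\frac{N}{2s_2}}(\lambda_2)\ \ge\ \frac{s_1}{N}S^{\frac{N}{2s_1}}(\lambda_1),
\]
so the quantity on the right-hand side of (\ref{energy level PS}) equals $\frac{s_1}{N}S^{\frac{N}{2s_1}}(\lambda_1)=J_\nu(z_{\mu,s_1}^{\lambda_1},0)$, the energy of the lower semi-trivial solution.

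The decisive step is to show $\Tilde{c}_\nu$ lies strictly below this value. Since $\beta<2$ (or $\beta=2$ with $\nu$ large), Proposition \ref{Semitrivial solution as a minimizer}(iii) asserts that $(z_{\mu,s_1}^{\lambda_1},0)$ is a saddle point of $J_\nu$ on $\mathcal{N}_\nu$; the descent direction built there furnishes, for each small $t\neq0$, a point $(s(t)z_{\mu,s_1}^{\lambda_1},\,s(t)tv)\in\mathcal{N}_\nu$ with $J_\nu$ strictly smaller than $J_\nu(z_{\mu,s_1}^{\lambda_1},0)$, cf. \eqref{maximum point}. Passing to the infimum over $\mathcal{N}_\nu$ gives
\[
\Tilde{c}_\nu\ <\ \frac{s_1}{N}S^{\frac{N}{2s_1}}(\lambda_1)\ =\ \min\bigg\{\frac{s_1}{N}S^{\frac{N}{2s_1}}(\lambda_1),\ \frac{s_2}{N}S^{\frac{N}{2s_2}}(\lambda_2)\bigg\},
\]
so the minimization level sits below the threshold in (\ref{energy level PS}). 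Next I would apply Ekeland's variational principle on $\mathcal{N}_\nu$ to produce a minimizing sequence $\{(u_n,v_n)\}\subset\mathcal{N}_\nu$ that is a (PS) sequence for $J_\nu|_{\mathcal{N}_\nu}$ at level $\Tilde{c}_\nu$; by Lemma \ref{equivalent of critical points lemma} it is a bounded (PS) sequence for $J_\nu$ on $\mathbb{D}$. Because $\Tilde{c}_\nu$ is below the threshold, Lemma \ref{PS compactness lemma} in the subcritical case, or Lemma \ref{critical with h radial}(i) in the critical case with $h$ radial (working inside $\mathbb{D}_r$), yields a subsequence converging strongly to some $(\Tilde{u},\Tilde{v})\in\mathbb{D}$ with $J_\nu(\Tilde{u},\Tilde{v})=\Tilde{c}_\nu$ and $J'_\nu(\Tilde{u},\Tilde{v})=0$; as all bound states lie on $\mathcal{N}_\nu$, this minimizer is a ground state.

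Positivity is then handled verbatim as in Theorem \ref{First theorem}: replacing $(\Tilde{u},\Tilde{v})$ by $(|\Tilde{u}|,|\Tilde{v}|)$ and using the unique fibering factor $t_2\le1$ forces $\Tilde{u},\Tilde{v}\ge0$; the strict inequality above excludes $\Tilde{u}\equiv0$ and $\Tilde{v}\equiv0$, for otherwise $(\Tilde{u},\Tilde{v})$ would be one of the semi-trivial solutions with energy at least $\frac{s_1}{N}S^{\frac{N}{2s_1}}(\lambda_1)$, contradicting $\Tilde{c}_\nu<\frac{s_1}{N}S^{\frac{N}{2s_1}}(\lambda_1)$; finally the strong maximum principle \cite[Theorem 1.2]{Pezzo2017} upgrades $\Tilde{u},\Tilde{v}\ge0$ to $\Tilde{u},\Tilde{v}>0$ in $\mathbb{R}^N\setminus\{0\}$. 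The main obstacle is precisely the strict energy gap, but this is delivered by the saddle-point descent of Proposition \ref{Semitrivial solution as a minimizer}(iii); once it is in hand the remainder repeats the compactness argument. Case (ii) follows by interchanging the two components and invoking Proposition \ref{Semitrivial solution as a minimizer}(iv) instead.
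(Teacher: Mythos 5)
Your proposal is correct and follows essentially the same route as the paper's own proof: the saddle-point property of $(z_{\mu,s_{1}}^{\lambda_{1}},0)$ from Proposition \ref{Semitrivial solution as a minimizer}(iii) gives the strict inequality $\Tilde{c}_{\nu}<\frac{s_{1}}{N}S^{\frac{N}{2s_{1}}}(\lambda_{1})=\min\big\{\frac{s_{1}}{N}S^{\frac{N}{2s_{1}}}(\lambda_{1}),\frac{s_{2}}{N}S^{\frac{N}{2s_{2}}}(\lambda_{2})\big\}$, after which Lemma \ref{PS compactness lemma} (subcritical case) or Lemma \ref{critical with h radial}(i) (critical, $h$ radial) yields a minimizer on $\mathcal{N}_{\nu}$, and positivity is settled exactly as in Theorem \ref{First theorem} via the maximum principle of \cite[Theorem 1.2]{Pezzo2017}. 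The only differences are presentational: you make explicit the Ekeland/Lemma \ref{equivalent of critical points lemma} step producing the (PS) sequence and the chain of inequalities identifying the threshold, both of which the paper leaves implicit.
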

\begin{proof}
(i). Under one of the hypotheses i.e.  either $\beta<2$ or $\beta =2$ and $\nu$ large enough, Proposition \ref{Semitrivial solution as a minimizer} states that $(z_{\mu,s_{1}}^{\lambda_{1}},0)$ is a saddle point for the functional $J_{\nu}$ restricted on $\mathcal{N}_{\nu}$. Since $S^{\frac{N}{2s_{2}}}(\lambda_{2}) \geq S^{\frac{N}{2s_{1}}}(\lambda_{1})~\text{and}~s_{2} \geq s_{1}$ , we have
\[\Tilde{c}_{\nu}< J_{\nu}(z_{\mu,s_{1}}^{\lambda_{1}},0) = \frac{s_{1}}{N}S^{\frac{N}{2s_{1}}}(\lambda_{1}) = \min\bigg\{\frac{s_{1}}{N} S^{\frac{N}{2s_{1}}}(\lambda_{1}),\frac{s_{2}}{N} S^{\frac{N}{2s_{2}}}(\lambda_{2}) \bigg\},\]
where $\Tilde{c}_{\nu}$ is defined by (\ref{ground state level}). In the case of $\alpha+\beta< \min\{ 2_{s_{1}}^{*},2_{s_{2}}^{*}\}$, the Lemma \ref{PS compactness lemma} ensures the existence $(\Tilde{u},\Tilde{v}) \in \mathcal{N}_{\nu}$ such that $\Tilde{c}_{\nu} = J_{\nu}(\Tilde{u},\Tilde{v})$. Now we can assume that $\Tilde{u},\Tilde{v}\geq 0$ and $(\Tilde{u},\Tilde{v})$ are not identically $0$ using the same argument as in Theorem \ref{First theorem}. Then, we use the maximum principle by Pezzo and Quaas  \cite[Theorem 1.2]{Pezzo2017} to conclude that $(\Tilde{u},\Tilde{v})$ is a positive ground state solution of (\ref{main problem}).\

In the case of $\alpha+\beta = \min\{2_{s_{1}}^{*},2_{s_{2}}^{*} \}$ and $h$ radial, we use part (i) of Lemma \ref{critical with h radial} and deduce the existence of a positive ground state $(\Tilde{u},\Tilde{v})$ of (\ref{main problem}).\
 
 (ii) We can also deduce the same conclusion for this part by repeating an analogous argument.
\end{proof}

The following theorem proves that the semi-trivial solutions are the ground state if the order relation between $S^{\frac{N}{2s_{2}}}(\lambda_{2})$ and $S^{\frac{N}{2s_{1}}}(\lambda_{1})$ is strict with $\alpha \geq 2$ or $\beta \geq 2$ and $\nu$ sufficiently small.
\begin{theorem} \label{third theorem}
Assume (\ref{C}). Then the following statements are true:
\begin{itemize}
    \item [(i)] If $S^{\frac{N}{2s_{1}}}(\lambda_{1})> S^{\frac{N}{2s_{2}}}(\lambda_{2}), s_{1} \geq s_{2}$ and $\alpha \geq 2$, then there exists $\nu_0>0$ such that for any $0<\nu<\nu_0$ the pair $(0, z_{\mu,s_{2}}^{\lambda_{2}})$ is a ground state of (\ref{main problem}).
    \item [(ii)] If $S^{\frac{N}{2s_{2}}}(\lambda_{2})> S^{\frac{N}{2s_{1}}}(\lambda_{1}), s_{2} \geq s_{1}$ and $\beta \geq 2$, then there exists $\nu_0>0$ such that for any $0<\nu<\nu_0$ the pair $( z_{\mu,s_{1}}^{\lambda_{1}},0)$ is a ground state of (\ref{main problem}).
\end{itemize}
\end{theorem}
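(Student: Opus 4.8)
The plan is to prove that, for $\nu$ small, the Nehari infimum $\tilde c_\nu := \inf_{(u,v)\in\mathcal N_\nu} J_\nu(u,v)$ equals the \emph{smaller} of the two semi-trivial energy levels and is attained precisely at the semi-trivial pair, which then forces that pair to be a ground state. I treat part (i); part (ii) is obtained verbatim after interchanging $(u,s_1,\lambda_1,\alpha)\leftrightarrow(v,s_2,\lambda_2,\beta)$. First I would record the upper bound: $(0,z^{\lambda_2}_{\mu,s_2})$ is a critical point of $J_\nu$, hence lies on $\mathcal N_\nu$, and by \eqref{extremal value at norms} and \eqref{energy functional on Nehari manifold} its energy is $J_\nu(0,z^{\lambda_2}_{\mu,s_2})=\frac{s_2}{N}S^{\frac{N}{2s_2}}(\lambda_2)$. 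Under $s_1\ge s_2$ and $S^{\frac{N}{2s_1}}(\lambda_1)>S^{\frac{N}{2s_2}}(\lambda_2)$ one has the strict inequality $\frac{s_1}{N}S^{\frac{N}{2s_1}}(\lambda_1)>\frac{s_2}{N}S^{\frac{N}{2s_2}}(\lambda_2)$, so this common value is exactly $\min\{\frac{s_1}{N}S^{\frac{N}{2s_1}}(\lambda_1),\frac{s_2}{N}S^{\frac{N}{2s_2}}(\lambda_2)\}$ and $\tilde c_\nu\le\frac{s_2}{N}S^{\frac{N}{2s_2}}(\lambda_2)$. Since every bound state lies on $\mathcal N_\nu$, it suffices to prove the reverse inequality $\tilde c_\nu\ge\frac{s_2}{N}S^{\frac{N}{2s_2}}(\lambda_2)$ for $\nu$ small: this gives $c_\nu=\frac{s_2}{N}S^{\frac{N}{2s_2}}(\lambda_2)$, attained at $(0,z^{\lambda_2}_{\mu,s_2})$.

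I would then argue by contradiction, supposing $\tilde c_\nu<\frac{s_2}{N}S^{\frac{N}{2s_2}}(\lambda_2)$. As this level is strictly below $\min\{\frac{s_1}{N}S^{\frac{N}{2s_1}}(\lambda_1),\frac{s_2}{N}S^{\frac{N}{2s_2}}(\lambda_2)\}$, an Ekeland minimizing sequence on $\mathcal N_\nu$ is, by Lemma \ref{equivalent of critical points lemma}, a bounded (PS) sequence for $J_\nu$ in $\mathbb D$; Lemma \ref{PS compactness lemma} (subcritical range) and Lemma \ref{critical with h radial}(i) (critical radial range) then yield strong convergence to a minimizer $(\tilde u,\tilde v)\in\mathcal N_\nu$, which is a free critical point of $J_\nu$. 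Exactly as in Theorem \ref{First theorem} I may assume $\tilde u,\tilde v\ge0$. If $\tilde v\equiv0$ then $\tilde u=z^{\lambda_1}_{\sigma,s_1}$ and $J_\nu(\tilde u,0)=\frac{s_1}{N}S^{\frac{N}{2s_1}}(\lambda_1)$, while if $\tilde u\equiv0$ then $J_\nu(0,\tilde v)=\frac{s_2}{N}S^{\frac{N}{2s_2}}(\lambda_2)$; both contradict $\tilde c_\nu<\frac{s_2}{N}S^{\frac{N}{2s_2}}(\lambda_2)$, so $\tilde u\not\equiv0$ and $\tilde v\not\equiv0$.

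The decisive step is a lower bound on $\sigma_1:=\int_{\mathbb R^N}\tilde u^{2^*_{s_1}}\,\mathrm dx$. Testing $J'_\nu(\tilde u,\tilde v)=0$ against $(\tilde u,0)$ gives $\|\tilde u\|^2_{\lambda_1,s_1}=\sigma_1+\nu\alpha\int_{\mathbb R^N}h\tilde u^\alpha\tilde v^\beta\,\mathrm dx$, while the embedding \eqref{embeddings} yields $\|\tilde u\|^2_{\lambda_1,s_1}\ge S(\lambda_1)\sigma_1^{2/2^*_{s_1}}$. From \eqref{energy functional on Nehari manifold} and $\alpha+\beta>2$ one has $J_\nu(\tilde u,\tilde v)\ge\frac{s_2}{N}\sigma_2$ with $\sigma_2:=\int_{\mathbb R^N}\tilde v^{2^*_{s_2}}\,\mathrm dx$, whence $\sigma_2<S^{\frac{N}{2s_2}}(\lambda_2)$ is bounded; combining this with Hölder's inequality $\int_{\mathbb R^N}h\tilde u^\alpha\tilde v^\beta\,\mathrm dx\le C(h)\sigma_1^{\alpha/2^*_{s_1}}\sigma_2^{\beta/2^*_{s_2}}$ produces
\[
S(\lambda_1)\,\sigma_1^{\frac{N-2s_1}{N}}\ \le\ \sigma_1+D\nu\,\sigma_1^{\frac{\alpha}{2}\left(\frac{N-2s_1}{N}\right)},
\]
with $D>0$ independent of $\nu$ and of $(\tilde u,\tilde v)$. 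Since $\alpha\ge2$, this is precisely the hypothesis of Lemma \ref{Abdelloui fractional version} with $C=S(\lambda_1)$, $\delta=\alpha$ and $s=s_1$; hence for every $\epsilon>0$ there is $\nu_0>0$ such that $\sigma_1\ge(1-\epsilon)S^{\frac{N}{2s_1}}(\lambda_1)$ whenever $0<\nu<\nu_0$. Consequently
\[
\tilde c_\nu=J_\nu(\tilde u,\tilde v)\ \ge\ \frac{s_1}{N}\sigma_1\ \ge\ \frac{s_1}{N}(1-\epsilon)S^{\frac{N}{2s_1}}(\lambda_1),
\]
and choosing $\epsilon$ small (possible because $\frac{s_1}{N}S^{\frac{N}{2s_1}}(\lambda_1)>\frac{s_2}{N}S^{\frac{N}{2s_2}}(\lambda_2)$ strictly) gives $\tilde c_\nu\ge\frac{s_2}{N}S^{\frac{N}{2s_2}}(\lambda_2)$, the required contradiction.

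I expect the main obstacle to be exactly this last estimate: one must control the coupling term uniformly in $\nu$ and in the (unknown) minimizer, using the a priori bound on $\sigma_2$ together with Hölder's inequality, so as to recast the component Euler--Lagrange relation into the scalar inequality of Lemma \ref{Abdelloui fractional version}. The structural role of the hypothesis $\alpha\ge2$ (respectively $\beta\ge2$ in part (ii)) is precisely to secure $\delta\ge2$ there, which is what makes $\inf T_\nu\to S^{\frac{N}{2s_1}}(\lambda_1)$ as $\nu\to0^+$; the strict ordering $S^{\frac{N}{2s_1}}(\lambda_1)>S^{\frac{N}{2s_2}}(\lambda_2)$ with $s_1\ge s_2$ is what converts the lower bound on $\sigma_1$ into the contradiction. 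Part (ii) follows by the symmetric computation, bounding $\sigma_2$ from below via the second equation (coupling weight $\beta$, so $\delta=\beta\ge2$) and using $s_2\ge s_1$ together with $S^{\frac{N}{2s_2}}(\lambda_2)>S^{\frac{N}{2s_1}}(\lambda_1)$.
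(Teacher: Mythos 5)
Your proposal is correct and follows essentially the same route as the paper's own proof: upper bound from the semi-trivial critical point on $\mathcal{N}_\nu$, a contradiction argument using the (PS) compactness of Lemma \ref{PS compactness lemma} (resp.\ Lemma \ref{critical with h radial}(i)) at levels below $\min\{\frac{s_1}{N}S^{\frac{N}{2s_1}}(\lambda_1),\frac{s_2}{N}S^{\frac{N}{2s_2}}(\lambda_2)\}$, nontriviality of both components, and the decisive scalar inequality fed into Lemma \ref{Abdelloui fractional version} with $\delta=\alpha\geq 2$ to force $\sigma_1\geq(1-\epsilon)S^{\frac{N}{2s_1}}(\lambda_1)$ and the contradiction. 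The only differences are organizational (you fix a small $\nu$ and invoke Ekeland explicitly, while the paper phrases the contradiction along a sequence $\nu_n\downarrow 0$ and reaches it through the bound $\sigma_{s_1,n}+\sigma_{s_2,n}<S^{\frac{N}{2s_2}}(\lambda_2)$), which are equivalent.
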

\begin{proof}
(i) We recall that the pair $(0, z_{\mu,s_{2}}^{\lambda_{2}})$ is a local minimum of $J_{\nu}$ over $\mathcal{N}_{\nu}$ by Proposition \ref{Semitrivial solution as a minimizer} provided $\nu$ sufficiently small. Then $c_\nu \leq J_{\nu_{}}(0, z_{\mu,s_{2}}^{\lambda_{2}})$. Indeed, we want to prove that equality holds. On contrary, we suppose that there exists a sequence $\{\nu_{n}\}$ decreasing to $0$ satisfying $\Tilde{c}_{\nu_{n}} < J_{\nu_{n}}(0, z_{\mu,s_{2}}^{\lambda_{2}})$. By assumption $S^{\frac{N}{2s_{1}}}(\lambda_{1})> S^{\frac{N}{2s_{2}}}(\lambda_{2}),s_{1} \geq s_{2}$, we have the following inequality
\begin{equation} \label{four two}
    \Tilde{c}_{\nu_{n}} < \frac{s_{2}}{N} S^{\frac{N}{2s_{2}}}(\lambda_{2})= \min\bigg\{\frac{s_{1}}{N} S^{\frac{N}{2s_{1}}}(\lambda_{1}),\frac{s_{2}}{N} S^{\frac{N}{2s_{2}}}(\lambda_{2}) \bigg\},
\end{equation}
where $\Tilde{c}_{\nu_{n}}$ is given in (\ref{ground state level}) with $\nu = \nu_{n}$. If $\alpha+\beta< \min\{ 2_{s_{1}}^{*},2_{s_{2}}^{*}\}$, the (PS) condition holds by Lemma \ref{PS compactness lemma} at level $\Tilde{c}_{\nu_{n}}$. For the case ${\alpha} +{\beta} = \min\{2_{s_{1}}^{*}, 2_{s_{2}}^{*}\},$ we apply part (i) of Lemma \ref{critical with h radial} for $h$ radial to reach the same conclusion. Hence, for each $n \in \mathbb{N}$, there exists $(\Tilde{u}_{n},\Tilde{v}_{n}) \in \mathbb{D}$ which solves \eqref{main problem} such that $\Tilde{c}_{\nu_{n}} = J_{\nu_{n}}(\Tilde{u}_{n},\Tilde{v}_{n})$.  By following the same argument as in Theorem \ref{First theorem}, we can assume that $\Tilde{u}_{n} \geq 0$ and $\Tilde{v}_{n} \geq 0$. Moreover, $\Tilde{u}_{n} \not\equiv 0$ and $\Tilde{v}_{n} \not\equiv 0$ in $\mathbb{R}^{N}$ as the assumption either  $\Tilde{u}_{n}\equiv 0$ or  $\Tilde{v}_{n}\equiv 0$  contradicts (\ref{four two}). Indeed, we can conclude by the maximum principle of Pezzo and Quaas  \cite[Theorem 1.2]{Pezzo2017} that $\Tilde{u}_{n} > 0$ and $\Tilde{v}_{n} > 0$ in $\mathbb{R}^{N} \backslash \{0\}$. Further, Let us define the following integrals
$$ \sigma_{s_{1},n} = \int_{\mathbb{R}^{N}} \Tilde{u}_{n}^{2_{s_{1}}^{*}}\,\mathrm{d}x~~~~~~~\text{and}~~~~~\sigma_{s_{2},n} = \int_{\mathbb{R}^{N}} \Tilde{v}_{n}^{2_{s_{2}}^{*}}\,\mathrm{d}x.$$
By (\ref{energy functional on Nehari manifold}), we obtain
\begin{equation} \label{four three}
    \Tilde{c}_{\nu_{n}} = J_{\nu_{n}}(\Tilde{u}_{n},\Tilde{v}_{n}) = \frac{s_{1}}{N} \sigma_{s_{1},n} + \frac{s_{2}}{N} \sigma_{s_{2},n} + \nu_{n} \bigg( \frac{\alpha+\beta-2}{2}\bigg)\int_{\mathbb{R}^{N}}h(x)\Tilde{u}_{n}^{\alpha} \Tilde{v}_{n}^{\beta}\,\mathrm{d}x.
\end{equation}
From (\ref{four two}), (\ref{four three}) and \eqref{condition on h}, we obtain that
\begin{equation} \label{four four}
    \sigma_{s_{1},n} + \sigma_{s_{2},n} < S^{\frac{N}{2s_{2}}}(\lambda_{2}).
\end{equation}
We combine the definition of $S(\lambda_1)$ with the first equation in the system \eqref{main problem} as we have given that $(\Tilde{u}_{n},\Tilde{v}_{n})$ is a solution to (\ref{main problem}). we deduce
\begin{equation} \label{four five}
    S(\lambda_{1}) (\sigma_{s_{1},n})^{\frac{N-2s_{1}}{N}} \leq \sigma_{s_{1},n} + \nu_{n} \alpha \int_{\mathbb{R}^{N}}h(x)\Tilde{u}_{n}^{\alpha} \Tilde{v}_{n}^{\beta}\,\mathrm{d}x.
\end{equation}
Now by the H\"older's inequality and the inequality (\ref{four four}), one finds that
\[\int_{\mathbb{R}^{N}}h(x)\Tilde{u}_{n}^{\alpha} \Tilde{v}_{n}^{\beta}\,\mathrm{d}x \leq C(h) \bigg(\int_{\mathbb{R}^{N}}|\Tilde{u}_{n}|^{2_{s_{1}}^{*}}\,\mathrm{d}x\bigg)^{\frac{\alpha}{2_{s_{1}}^{*}}} \bigg(\int_{\mathbb{R}^{N}}|\Tilde{v}_{n}|^{2_{s_{2}}^{*}}\,\mathrm{d}x\bigg)^{\frac{\beta}{2_{s_{2}}^{*}}} \hspace{8mm}\]
\[\leq  C(h) (S(\lambda_{2}))^{\beta \frac{N-2s_{2}}{4s_{2}}} (\sigma_{s_{1},n})^{\frac{\alpha}{2}\frac{N-2s_{1}}{N}}.\]
Using the above expression in (\ref{four five}), we get the following
\[ S(\lambda_{1}) (\sigma_{s_{1},n})^{\frac{N-2s_{1}}{N}} < \sigma_{s_{1},n} + \nu_{n} \alpha  C(h) (S(\lambda_{2}))^{\beta \frac{N-2s_{2}}{4s_{2}}} (\sigma_{s_{1},n})^{\frac{\alpha}{2}\frac{N-2s_{1}}{N}}.\]
Since $S^{\frac{N}{2s_{1}}}(\lambda_{1})> S^{\frac{N}{2s_{2}}}(\lambda_{2})$, then there exists $\epsilon>0$ such that 
\begin{equation} \label{four six}
    (1-\epsilon) S^{\frac{N}{2s_{1}}}(\lambda_{1}) \geq S^{\frac{N}{2s_{2}}}(\lambda_{2}).
\end{equation}
Since $\alpha \geq 2$, by using Lemma \ref{Abdelloui fractional version} with $\sigma = \sigma_{s_{1},n}$, there exists a $\nu_0 = \nu_0(\epsilon)>0$ such that 
\begin{equation*}
    \sigma_{s_{1},n} \geq (1-\epsilon) S^{\frac{N}{2s_{1}}}(\lambda_{1}) ~~\text{for~any}~0<\nu_{n}<\nu_0.
\end{equation*}
By using (\ref{four six}), one finds that $\sigma_{s_{1},n} \geq S^{\frac{N}{2s_{2}}}(\lambda_{2}),$ which gives a contradiction to the inequality (\ref{four four}). Hence, we have  
\begin{equation}\label{four eight}
     \Tilde{c}_{\nu_{}} = \frac{s_{2}}{N} S^{\frac{N}{2s_{2}}}(\lambda_{2}) = J_{\nu_{}}(0, z_{\mu,s_{2}}^{\lambda_{2}}),
\end{equation}
provided $\nu$ sufficiently small.
Thus, the pair $(0, z_{\mu,s_{2}}^{\lambda_{2}})$ is a ground state of (\ref{main problem}) for $\nu$ small enough.
Similarly, we can prove part (ii).
\end{proof}

Now for dealing with the case $\alpha+ \beta = \min\{2_{s_{1}}^{*}, 2_{s_{2}}^{*}\}$ with non-radial $h$, we set $s_{1} = s_{2}=s$ and $\nu$ sufficiently small. We have the following results.

\begin{theorem}
Assume that $\alpha+\beta = 2_{s}^{*}$ with $\nu$ sufficiently small and $h$ is a non-radial function. Then the system (\ref{main problem}) has a positive ground state solution $(\Tilde{u}, \Tilde{v}) \in \mathbb{D}$ under one of the following hypotheses:
\begin{itemize}
    \item [(i)] $S^{\frac{N}{2s_{}}}(\lambda_{2}) \geq S^{\frac{N}{2s_{}}}(\lambda_{1})$ and $\beta<2$,
    \item [(ii)] $S^{\frac{N}{2s_{}}}(\lambda_{1}) \geq S^{\frac{N}{2s_{}}}(\lambda_{2})$ and $\alpha<2$.
\end{itemize}

\end{theorem}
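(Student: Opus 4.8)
The plan is to run the scheme of Theorem~\ref{second theorem} essentially verbatim, but with the radial compactness result of Lemma~\ref{critical with h radial} replaced by the non-radial critical compactness result of Lemma~\ref{critical case with h non radial}, which is exactly the tool engineered for $\alpha+\beta=2_{s}^{*}$, non-radial $h$, $s_{1}=s_{2}=s$ and small $\nu$. I carry out case~(i) in detail; case~(ii) follows by interchanging the roles of the two components.

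For case~(i) I first use $\beta<2$. By Proposition~\ref{Semitrivial solution as a minimizer}(iii) the semi-trivial pair $(z_{\mu,s}^{\lambda_{1}},0)$ is a saddle point of $J_{\nu}$ on $\mathcal{N}_{\nu}$ for \emph{every} $\nu>0$, hence it is not a local minimizer and there are points of $\mathcal{N}_{\nu}$ arbitrarily close to it with strictly smaller energy. Therefore
\begin{equation*}
\Tilde{c}_{\nu}=\inf_{(u,v)\in\mathcal{N}_{\nu}}J_{\nu}(u,v)<J_{\nu}(z_{\mu,s}^{\lambda_{1}},0)=\frac{s}{N}S^{\frac{N}{2s}}(\lambda_{1}).
\end{equation*}
Since $S^{\frac{N}{2s}}(\lambda_{2})\ge S^{\frac{N}{2s}}(\lambda_{1})$, the right-hand side equals $\min\{\frac{s}{N}S^{\frac{N}{2s}}(\lambda_{1}),\frac{s}{N}S^{\frac{N}{2s}}(\lambda_{2})\}$, so the level $\Tilde{c}_{\nu}$ obeys the bound~\eqref{energy level PS}.

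Next I take a minimizing sequence $\{(u_{n},v_{n})\}\subset\mathcal{N}_{\nu}$ for $\Tilde{c}_{\nu}$; by Ekeland's variational principle combined with Lemma~\ref{equivalent of critical points lemma} it may be taken to be a bounded $(PS)$ sequence for $J_{\nu}$ in $\mathbb{D}$ at level $\Tilde{c}_{\nu}$. Because $\Tilde{c}_{\nu}$ satisfies~\eqref{energy level PS}, Lemma~\ref{critical case with h non radial} supplies a threshold $\nu_{0}>0$ such that, for $0<\nu\le\nu_{0}$, up to a subsequence $(u_{n},v_{n})\to(\Tilde{u},\Tilde{v})$ strongly in $\mathbb{D}$; the uniform lower bound for the $\mathbb{D}$-norm on $\mathcal{N}_{\nu}$ then forces $(\Tilde{u},\Tilde{v})\in\mathcal{N}_{\nu}$ and $J_{\nu}(\Tilde{u},\Tilde{v})=\Tilde{c}_{\nu}$. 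As in Theorem~\ref{First theorem}, passing to $(|\Tilde{u}|,|\Tilde{v}|)$ and rescaling back onto $\mathcal{N}_{\nu}$ does not raise the energy, so I may assume $\Tilde{u},\Tilde{v}\ge 0$; and the Lagrange-multiplier computation of Section~\ref{S2} guarantees that a minimizer of $J_{\nu}$ on $\mathcal{N}_{\nu}$ is an honest critical point, so $(\Tilde{u},\Tilde{v})$ solves~\eqref{main problem} and realizes the ground-state level $c_{\nu}=\Tilde{c}_{\nu}$. It remains to exclude semi-trivial limits: if $\Tilde{v}\equiv 0$ then $\Tilde{u}$ solves~\eqref{PDE with v zero}, whence $\Tilde{u}=z_{\mu,s}^{\lambda_{1}}$ and $J_{\nu}(\Tilde{u},0)=\frac{s}{N}S^{\frac{N}{2s}}(\lambda_{1})$, contradicting $\Tilde{c}_{\nu}<\min\{\cdots\}$; symmetrically $\Tilde{u}\equiv 0$ is impossible. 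Finally the strong maximum principle of Pezzo--Quaas~\cite[Theorem~1.2]{Pezzo2017} upgrades $\Tilde{u},\Tilde{v}\ge 0$ to $\Tilde{u},\Tilde{v}>0$ in $\mathbb{R}^{N}\setminus\{0\}$, settling case~(i); case~(ii) is identical using Proposition~\ref{Semitrivial solution as a minimizer}(iv) and $\alpha<2$.

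The only place where the non-radial hypothesis genuinely matters, and the expected main obstacle, is the compactness step. For radial $h$ the concentration set is confined to $\{0,\infty\}$ by symmetry, whereas for non-radial $h$ one must additionally rule out concentration at points $x_{j}\ne 0,\infty$. This is precisely what Lemma~\ref{critical case with h non radial} achieves through the coupled estimate $S(\rho_{j}+\Bar{\rho}_{j})^{2/2_{s}^{*}}\le(\rho_{j}+\Bar{\rho}_{j})(1+2_{s}^{*}\nu\Tilde{C})$ together with the smallness of $\nu$, which is exactly why the restriction $0<\nu\le\nu_{0}$ appears; the remaining arguments are a direct transcription of the radial critical case.
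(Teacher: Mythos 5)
Your proposal is correct and coincides with the paper's own (very terse) proof, which simply invokes the scheme of Theorem \ref{second theorem} together with the non-radial critical compactness result of Lemma \ref{critical case with h non radial}; your write-up is a faithful, more detailed expansion of exactly that argument (saddle-point property from Proposition \ref{Semitrivial solution as a minimizer} for $\beta<2$, strict level inequality $\Tilde{c}_{\nu}<\frac{s}{N}S^{\frac{N}{2s}}(\lambda_{1})$, compactness for small $\nu$, then nonnegativity, nontriviality and the Pezzo--Quaas maximum principle). No gaps to report.
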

\begin{proof}
The proof is direct by using the approach of Theorem \ref{second theorem} and Lemma \ref{critical case with h non radial}.
\end{proof}

\begin{theorem}
Assume that $\alpha+\beta = 2_{s}^{*}$ with $\nu$ sufficiently small and $h$ is a non-radial function. Then the following holds:
\begin{itemize}
 \item [(i)] If $S^{\frac{N}{2s_{1}}}(\lambda_{1})> S^{\frac{N}{2s_{2}}}(\lambda_{2})$ and $\alpha \geq 2$, then the pair $(0, z_{\mu,s_{2}}^{\lambda_{2}})$ is a ground state of (\ref{main problem}).
    \item [(ii)] If $S^{\frac{N}{2s_{2}}}(\lambda_{2})> S^{\frac{N}{2s_{1}}}(\lambda_{1})$ and $\beta \geq 2$, then the pair $( z_{\mu,s_{1}}^{\lambda_{1}},0)$ is a ground state of (\ref{main problem}).
\end{itemize}
\end{theorem}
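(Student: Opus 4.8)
The plan is to follow the proof of Theorem \ref{third theorem} essentially verbatim, the single modification being that the Palais--Smale compactness in the doubly critical non-radial regime (where $s_1=s_2=s$) is now supplied by Lemma \ref{critical case with h non radial} in place of part (i) of Lemma \ref{critical with h radial}. I would establish only (i), since (ii) follows by interchanging the roles of the two components.

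First, since $\alpha\geq 2$, Proposition \ref{Semitrivial solution as a minimizer}(ii) guarantees that the semi-trivial pair $(0,z_{\mu,s_2}^{\lambda_2})$ is a local minimum of $J_\nu$ on $\mathcal{N}_\nu$ for $\nu$ sufficiently small. Consequently $\Tilde{c}_\nu\le J_\nu(0,z_{\mu,s_2}^{\lambda_2})=\frac{s}{N}S^{\frac{N}{2s}}(\lambda_2)$, and since $S^{\frac{N}{2s}}(\lambda_1)>S^{\frac{N}{2s}}(\lambda_2)$ this value equals $\min\{\frac{s}{N}S^{\frac{N}{2s}}(\lambda_1),\frac{s}{N}S^{\frac{N}{2s}}(\lambda_2)\}$. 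The aim is to promote this inequality to an equality.

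Arguing by contradiction, I would take a sequence $\nu_n\downarrow 0$ along which $\Tilde{c}_{\nu_n}<\frac{s}{N}S^{\frac{N}{2s}}(\lambda_2)$, which is exactly inequality \eqref{four two}. As this level lies strictly below the threshold \eqref{energy level PS}, and because $h$ is non-radial with $s_1=s_2=s$, Lemma \ref{critical case with h non radial} applies and yields a minimizer $(\Tilde{u}_n,\Tilde{v}_n)\in\mathbb{D}$ of $J_{\nu_n}$ on $\mathcal{N}_{\nu_n}$ solving \eqref{main problem} with $J_{\nu_n}(\Tilde{u}_n,\Tilde{v}_n)=\Tilde{c}_{\nu_n}$. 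Exactly as in Theorem \ref{First theorem} one may take $\Tilde{u}_n,\Tilde{v}_n\ge 0$, both nontrivial (if one component vanished, the other would be a rescaled extremal $z_{\mu,s_i}^{\lambda_i}$ of energy at least $\frac{s}{N}S^{\frac{N}{2s}}(\lambda_2)$, contradicting \eqref{four two}), and hence positive by the maximum principle \cite[Theorem 1.2]{Pezzo2017}.

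Writing $\sigma_{s_1,n}=\int_{\mathbb{R}^N}\Tilde{u}_n^{2_s^*}\,\mathrm{d}x$ and $\sigma_{s_2,n}=\int_{\mathbb{R}^N}\Tilde{v}_n^{2_s^*}\,\mathrm{d}x$, the Nehari representation \eqref{four three} combined with \eqref{four two} gives $\sigma_{s_1,n}+\sigma_{s_2,n}<S^{\frac{N}{2s}}(\lambda_2)$, which is \eqref{four four}. Testing the first equation of \eqref{main problem} against $\Tilde{u}_n$ and using the definition of $S(\lambda_1)$ produces \eqref{four five}, while a H\"older estimate bounds the coupling integral by a constant multiple of $(\sigma_{s_1,n})^{\frac{\alpha}{2}\frac{N-2s}{N}}$. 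Since $\alpha\ge 2$, Lemma \ref{Abdelloui fractional version} then forces $\sigma_{s_1,n}\ge(1-\epsilon)S^{\frac{N}{2s}}(\lambda_1)$ for $\nu_n$ small; choosing $\epsilon$ as in \eqref{four six} gives $\sigma_{s_1,n}\ge S^{\frac{N}{2s}}(\lambda_2)$, contradicting \eqref{four four}. Therefore $\Tilde{c}_\nu=\frac{s}{N}S^{\frac{N}{2s}}(\lambda_2)=J_\nu(0,z_{\mu,s_2}^{\lambda_2})$ for $\nu$ small, so $(0,z_{\mu,s_2}^{\lambda_2})$ is a ground state. The only genuinely new ingredient relative to Theorem \ref{third theorem} is the compactness input, so the main (already resolved) obstacle is the non-radial concentration analysis encapsulated in Lemma \ref{critical case with h non radial}; every remaining step is a transcription of the earlier argument specialized to $s_1=s_2=s$.
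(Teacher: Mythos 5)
Your proposal is correct and coincides with the paper's own (very terse) proof, which simply states that the result "follows the approach of Theorem \ref{third theorem} and Lemma \ref{critical case with h non radial}" — precisely the substitution you make, replacing the radial compactness input of Lemma \ref{critical with h radial}(i) by Lemma \ref{critical case with h non radial} in the contradiction argument of Theorem \ref{third theorem}. Your transcription of the remaining steps (local minimality via Proposition \ref{Semitrivial solution as a minimizer}, the estimates \eqref{four two}--\eqref{four six}, and Lemma \ref{Abdelloui fractional version} with $\delta=\alpha\geq 2$) is faithful to the paper's argument specialized to $s_1=s_2=s$.
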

\begin{proof}
The proof follows the approach of Theorem \ref{third theorem} and Lemma \ref{critical case with h non radial}.
\end{proof}
In the next theorem, we show the existence of a positive bound state solution of the Mountain pass type.
\begin{theorem} \label{Bound state}
Assume (\ref{C}) with $s_{1} = s_{2} = s$. If 
\begin{itemize}
    \item [(i)] Either \begin{equation} \label{inequalities in bound state theorem}
        \alpha \geq 2 ~~and~~  \frac{1}{2}<\bigg(\frac{S(\lambda_{2})}{S(\lambda_{1})}\bigg)^{\frac{N}{2s}} <1, 
    \end{equation}
     \item [(ii)] or \begin{equation*}
        \beta \geq 2~~and~~  \frac{1}{2}<\bigg(\frac{S(\lambda_{1})}{S(\lambda_{2})}\bigg)^{\frac{N}{2s}} <1,
    \end{equation*}
\end{itemize}
then for $\nu$ sufficiently small, there exists a  Mountain pass type positive bound state solution to the problem (\ref{main problem}).
\end{theorem}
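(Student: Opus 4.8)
The plan is to realize the bound state as a mountain-pass critical point of the modified functional $J_\nu^+$ whose energy lies strictly above the two semi-trivial levels and inside the compactness window of Lemma~\ref{PS compactness lemma second} (subcritical $h$) or of Lemma~\ref{critical with h radial}(ii) (critical radial $h$, working in $\mathbb D_r$). I treat case (i) in detail; case (ii) follows verbatim after interchanging the two components and invoking Lemma~\ref{lemma three six} and Lemma~\ref{critical with h radial}(iii). Write $S_i=S^{N/2s}(\lambda_i)$. The hypothesis $\tfrac12<(S(\lambda_2)/S(\lambda_1))^{N/2s}<1$ reads $S_2<S_1<2S_2$; in particular $(0,z_{\mu,s}^{\lambda_2})$ is the ground state by Theorem~\ref{third theorem}(i), and, since $\alpha\ge 2$, it is a local minimum of $J_\nu$ on $\mathcal N_\nu$ by Proposition~\ref{Semitrivial solution as a minimizer}(ii).

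First I set up the min-max. Both $(z_{\mu,s}^{\lambda_1},0)$ and $(0,z_{\mu,s}^{\lambda_2})$ are critical points of $J_\nu^+$ with energies $\tfrac{s}{N}S_1$ and $\tfrac{s}{N}S_2$ by \eqref{extremal value at norms}. Let $\Gamma$ be the continuous paths in $\mathcal N_\nu^+$ joining them and put $c_\nu^{MP}=\inf_{\gamma\in\Gamma}\max_t J_\nu^+(\gamma(t))$. For the upper estimate I use the radial bubbles centred at the origin (both are forced to concentrate there by the Hardy terms, so they overlap) and consider, after projection onto $\mathcal N_\nu^+$, the path $\theta\mapsto t(\theta)\,(\cos\theta\,z_{\mu,s}^{\lambda_1},\,\sin\theta\,z_{\mu,s}^{\lambda_2})$, $\theta\in[0,\tfrac\pi2]$. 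For $\nu=0$, formula \eqref{energy functional on Nehari manifold} together with \eqref{extremal value at norms} gives the value $\tfrac{s}{N}\,(A S_1+B S_2)^{p/(p-1)}(A^pS_1+B^pS_2)^{-1/(p-1)}$ with $A=\cos^2\theta$, $B=\sin^2\theta$, $p=2_s^*/2$; by Jensen's inequality applied to $x\mapsto x^p$ with weights proportional to $S_1,S_2$, this is bounded by $\tfrac{s}{N}(S_1+S_2)$, with equality exactly at $\theta=\tfrac\pi4$, i.e. at the overlapping configuration $(z_{\mu,s}^{\lambda_1},z_{\mu,s}^{\lambda_2})$. Since $\int_{\mathbb R^N}h\,(z_{\mu,s}^{\lambda_1})^\alpha(z_{\mu,s}^{\lambda_2})^\beta\,dx>0$, the term $-\nu\int h|u|^\alpha|v|^\beta$ strictly lowers the maximum for $\nu>0$, whence
\[
\frac{s}{N}S_1<c_\nu^{MP}<\frac{s}{N}\big(S_1+S_2\big).
\]

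Next I locate $c_\nu^{MP}$ precisely enough to invoke compactness. Besides the window above, Lemma~\ref{PS compactness lemma second} requires $c_\nu^{MP}\neq\tfrac{sl}{N}S_2$ for all $l\in\mathbb N\setminus\{0\}$. Under $S_2<S_1<2S_2$ the only exceptional value in $(\tfrac{s}{N}S_1,\tfrac{s}{N}(S_1+S_2))$ is $\tfrac{2s}{N}S_2$, and it satisfies $\tfrac{s}{N}S_1<\tfrac{2s}{N}S_2<\tfrac{s}{N}(S_1+S_2)$. Because $c_\nu^{MP}\to\tfrac{s}{N}(S_1+S_2)$ as $\nu\to0^+$ while remaining strictly below it, for all sufficiently small $\nu$ one has $\tfrac{2s}{N}S_2<c_\nu^{MP}<\tfrac{s}{N}(S_1+S_2)$, a subinterval containing no exceptional level. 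Thus \eqref{c is less than sum}--\eqref{c is not equal modified problem} hold, and Lemma~\ref{PS compactness lemma second} (respectively Lemma~\ref{critical with h radial}(ii) in the critical radial case) produces $(\tilde u,\tilde v)\in\mathbb D$ with $J_\nu^+(\tilde u,\tilde v)=c_\nu^{MP}$ and $(J_\nu^+)'(\tilde u,\tilde v)=0$.

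Finally, working with $J_\nu^+$ forces $\tilde u^-,\tilde v^-\equiv0$ (test $(J_\nu^+)'(\tilde u,\tilde v)$ against $(\tilde u^-,0)$ and $(0,\tilde v^-)$ as in the proof of Lemma~\ref{PS compactness lemma second}), so $(\tilde u,\tilde v)$ solves \eqref{main problem} with $\tilde u,\tilde v\ge0$, and the maximum principle \cite[Theorem~1.2]{Pezzo2017} upgrades this to $\tilde u,\tilde v>0$ in $\mathbb R^N\setminus\{0\}$. As $c_\nu^{MP}>\tfrac{s}{N}S_1\ge\tfrac{s}{N}S_2$, the solution cannot coincide with either semi-trivial critical point, so it is a genuine positive bound state. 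I expect the crux to be the two-sided control of the min-max level: the upper bound hinges on the strict energy drop caused by the positive coupling integral at the overlapping configuration, while the lower bound $c_\nu^{MP}>\tfrac{2s}{N}S_2$ (the one needed to step over the single exceptional level) rests on a lower-semicontinuity argument for $c_\nu^{MP}$ as $\nu\to0^+$, together with the sharp arithmetic encoded in $\tfrac12<(S(\lambda_2)/S(\lambda_1))^{N/2s}<1$; making the convergence $c_\nu^{MP}\to\tfrac{s}{N}(S_1+S_2)$ rigorous is the delicate point.
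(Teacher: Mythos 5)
Your framework coincides with the paper's: the same min--max class of paths on $\mathcal{N}_\nu^+$ joining $(z_{\mu,s}^{\lambda_1},0)$ to $(0,z_{\mu,s}^{\lambda_2})$, essentially the same explicit path for the upper bound $\mathcal{C}_{\mathcal{MP}}<\frac{s}{N}\big(S^{\frac{N}{2s}}(\lambda_1)+S^{\frac{N}{2s}}(\lambda_2)\big)$, and the same endgame (compactness via Lemma \ref{PS compactness lemma second} resp.\ Lemma \ref{critical with h radial}, positivity via negative parts and the maximum principle). The genuine gap is the lower bound on the min--max level, which is precisely the step you yourself flag as ``the delicate point'' and leave unproven. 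You need $c_\nu^{MP}>\frac{2s}{N}S^{\frac{N}{2s}}(\lambda_2)$ (and $>\frac{s}{N}S^{\frac{N}{2s}}(\lambda_1)$, or else the mountain-pass geometry and the window \eqref{c is less than sum} are not even established), and you derive it from the assertion $c_\nu^{MP}\to\frac{s}{N}\big(S^{\frac{N}{2s}}(\lambda_1)+S^{\frac{N}{2s}}(\lambda_2)\big)$ as $\nu\to0^+$. No argument is given for this, and it does not follow from soft lower semicontinuity: both the functional $J_\nu^+$ and the admissible class $\Sigma_\nu$ (paths constrained to the $\nu$-dependent manifold $\mathcal{N}_\nu^+$) move with $\nu$, so the min--max value has no automatic continuity in $\nu$. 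Moreover your claimed limit is stronger than what the paper ever proves: by convexity of $x\mapsto x^{N/2s}$, the paper's uniform lower bound $\frac{2s}{N}\big(\frac{S(\lambda_1)+S(\lambda_2)}{2}\big)^{\frac{N}{2s}}$ is strictly below $\frac{s}{N}\big(S^{\frac{N}{2s}}(\lambda_1)+S^{\frac{N}{2s}}(\lambda_2)\big)$ unless $\lambda_1=\lambda_2$, so your route would require a sharper (and unestablished) estimate than the one the theorem actually rests on.

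What the paper does instead, and what your proof is missing, is a quantitative bound that is uniform over paths and valid for all small $\nu$ at once, with no limiting argument. For any $\varphi\in\Sigma_\nu$, the functions $\sigma_{j,s}(t)=\int_{\mathbb{R}^N}(\varphi_j^+(t))^{2_s^*}\,\mathrm{d}x$ are continuous, equal to $\big(S^{\frac{N}{2s}}(\lambda_1),0\big)$ at $t=0$ and $\big(0,S^{\frac{N}{2s}}(\lambda_2)\big)$ at $t=1$, so there is $t_0$ with $\sigma_{1,s}(t_0)=\sigma_{2,s}(t_0)=:\tilde\sigma_s$. At this balanced point the Nehari identity, the definition of $S(\lambda_j)$ and H\"older give $(S(\lambda_1)+S(\lambda_2))\,\tilde\sigma_s^{\frac{N-2s}{N}}\le 2\tilde\sigma_s+C\nu(\alpha+\beta)\,\tilde\sigma_s^{\frac{\alpha+\beta}{2}\frac{N-2s}{N}}$, and since $\alpha+\beta\ge2$ the algebraic Lemma \ref{Abdelloui fractional version} yields $\tilde\sigma_s\ge(1-\epsilon)\big(\frac{S(\lambda_1)+S(\lambda_2)}{2}\big)^{\frac{N}{2s}}$ for every $0<\nu\le\nu_0(\epsilon)$. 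Hence, by \eqref{energy functional on Nehari manifold}, $\max_t J_\nu^+(\varphi(t))\ge\frac{2s}{N}\tilde\sigma_s$, which by the choice of $\epsilon$ in \eqref{four ten} --- this is exactly where $S(\lambda_1)>S(\lambda_2)$ from \eqref{inequalities in bound state theorem} enters --- strictly exceeds both $\frac{2s}{N}S^{\frac{N}{2s}}(\lambda_2)$ (the only exceptional value of \eqref{c is not equal modified problem} inside the window) and $(1+\epsilon)\frac{s}{N}S^{\frac{N}{2s}}(\lambda_1)$. A secondary, minor imprecision: your statement that the coupling term ``strictly lowers the maximum'' is not the actual mechanism, since on the Nehari manifold the coupling enters the restricted energy with a \emph{positive} coefficient; the strict inequality comes from the shrinking of the Nehari projection factor caused by the coupling in the constraint, as in \eqref{four one nine}--\eqref{four two zero}, and this is how the upper bound must be argued.
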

\begin{proof}
$(i)$  We start with constructing a Mountain pass level so that the functional $J_{\nu}^{+}$ restricted on ${\mathcal{N}_{\nu}^{+}}$ satisfies the Mountain pass geometry, and the Palais-Smale condition is also satisfied at this level. 
So we consider the set of paths connecting continuously $(z_{\mu,s}^{\lambda_{1}},0)$ to $(0,z_{\mu,s}^{\lambda_{2}})$, namely
\[ \Sigma_{\nu} = \big\{ \varphi(t) = (\varphi_{1}(t),\varphi_{2}(t)) \in C^{0}([0,1],\mathcal{N}_{\nu}^{+}): \varphi(0)=(z_{\mu,s}^{\lambda_{1}},0)~\text{and}~\varphi(1)= (0,z_{\mu,s}^{\lambda_{2}}) \big\},\]
and define the associated Mountain pass level as
$$ \mathcal{C}_{\mathcal{MP}} = \inf\limits_{\varphi \in \Sigma_{\nu}} \max\limits_{t \in [0,1]} J_{\nu}^{+}(\varphi(t)).$$
Assumption (\ref{inequalities in bound state theorem}) implies that
\begin{equation} \label{Separability condition implies}
  \frac{2s}{N}S^{\frac{N}{2s}}(\lambda_{2}) > \frac{s}{N}S^{\frac{N}{2s}}(\lambda_{1}).   
\end{equation}
Further, by the monotonicity of $S(\lambda)$, we can choose $\epsilon>0$ sufficiently small such that
\begin{equation} \label{four ten}
    \frac{2s}{N}(1-\epsilon) \bigg( \frac{S(\lambda_{1})+S(\lambda_{2})}{2}\bigg)^{\frac{N}{2s}} > \frac{2s}{N}S^{\frac{N}{2s}}(\lambda_{2}) > \frac{s(1+\epsilon)}{N}S^{\frac{N}{2s}}(\lambda_{1}).
\end{equation}
Now we claim the existence of a $\nu_0 = \nu_0(\epsilon)>0$ such that the following inequality 
\begin{equation} \label{four elevan}
    \max\limits_{t \in [0,1]} J_{\nu}^{+}(\varphi(t)) \geq \frac{2s}{N}(1-\epsilon) \bigg( \frac{S(\lambda_{1})+S(\lambda_{2})}{2}\bigg)^{\frac{N}{2s}} ~\text{with}~\varphi\in \Sigma_{\nu},
\end{equation}
holds for every $0<\nu<\nu_0$.
If $\varphi=(\varphi_{1},\varphi_{2}) \in \Sigma_{\nu}$, then by using identity (\ref{equivalent norm for modified problem}), we have
\begin{align} \label{four one two}
    \begin{split}
        \iint_{\mathbb{R}^{2N}} \frac{|\varphi_{1}(x)-\varphi_{1}(y)|^{2} + |\varphi_{2}(x)-\varphi_{2}(y)|^{2}}{|x-y|^{N+2s_{}}} \,\mathrm{d}x \mathrm{d}y - \lambda_{1} \int_{\mathbb{R}^{N}}\frac{\varphi_{1}^{2}}{|x|^{2s}}\,\mathrm{d}x - \lambda_{2} \int_{\mathbb{R}^{N}}\frac{\varphi_{2}^{2}}{|x|^{2s}}\,\mathrm{d}x\\
        = \int_{\mathbb{R}^{N}} \big( (\varphi_{1}^{+}(t))^{2_{s}^{*}} + (\varphi_{2}^{+}(t))^{2_{s}^{*}}\big)\, \mathrm{d}x + \nu (\alpha + \beta) \int_{\mathbb{R}^{N}}h(x)(\varphi_{1}^{+}(t))^{\alpha}(\varphi_{2}^{+}(t))^{\beta}\,\mathrm{d}x,
    \end{split}
\end{align}
and we use (\ref{restricted functonal on nehari manifold of modified problem}) to get
\begin{align}\label{four one three}
\begin{split}
    J_{\nu}^{+}(\varphi(t)) = \frac{s}{N} \int_{\mathbb{R}^{N}} \big( (\varphi_{1}^{+}(t))^{2_{s}^{*}} + (\varphi_{2}^{+}(t))^{2_{s}^{*}}\big)\, \mathrm{d}x \hspace{3cm}\\
    + \nu \bigg( \frac{\alpha+\beta-2}{2} \bigg)\int_{\mathbb{R}^{N}}h(x)(\varphi_{1}^{+}(t))^{\alpha}(\varphi_{2}^{+}(t))^{\beta}\,\mathrm{d}x.
\end{split}
\end{align}
Now we define $\sigma_{s}(t) = (\sigma_{1,s}(t),\sigma_{2,s}(t))$ with $\sigma_{j,s}(t) = \int_{\mathbb{R}^{N}}(\varphi_{j}^{+}(t))^{2_{s}^{*}}\, \mathrm{d}x$ for $j = 1,2$. Observe that if $\sigma_{j,s}^{}(t) > 2 S^{\frac{N}{2s}}(\lambda_{j})$, then the inequality (\ref{four elevan}) holds. Therefore, we assume that $\sigma_{j,s}^{}(t) \leq 2 S^{\frac{N}{2s}}(\lambda_{j})$, $j =1,2$ for all $t \in [0,1]$. We combine the definition of $S(\lambda)$ with (\ref{four one two}) and obtain
\begin{align}\label{four one four}
    \begin{split}
        S(\lambda_{1})(\sigma_{1,s}(t))^{\frac{N-2s}{N}} + S(\lambda_{2})(\sigma_{2,s}(t))^{\frac{N-2s}{N}} \leq \iint_{\mathbb{R}^{2N}} \frac{|\varphi_{1}(x)-\varphi_{1}(y)|^{2} + |\varphi_{2}(x)-\varphi_{2}(y)|^{2}}{|x-y|^{N+2s_{}}}\, \mathrm{d}x \mathrm{d}y \\
        - \lambda_{1} \int_{\mathbb{R}^{N}}\frac{\varphi_{1}^{2}}{|x|^{2s}}\,\mathrm{d}x - \lambda_{2} \int_{\mathbb{R}^{N}}\frac{\varphi_{2}^{2}}{|x|^{2s}}\,\mathrm{d}x \hspace{3cm}\\
        = \sigma_{1,s}(t) + \sigma_{2,s}(t) \hspace{6cm}\\
        +\nu (\alpha + \beta) \int_{\mathbb{R}^{N}}h(x)(\varphi_{1}^{+}(t))^{\alpha}(\varphi_{2}^{+}(t))^{\beta}\,\mathrm{d}x. \hspace{1cm}
    \end{split}
\end{align}
Using the H\"older’s inequality, we get
\begin{equation} \label{four one five}
    \int_{\mathbb{R}^{N}}h(x)(\varphi_{1}^{+}(t))^{\alpha}(\varphi_{2}^{+}(t))^{\beta} \, \mathrm{d}x \leq  ~C(h) (\sigma_{1,s}(t))^{\frac{\alpha}{2}\frac{N-2s}{N}} (\sigma_{2,s}(t))^{\frac{\beta}{2}\frac{N-2s}{N}}.
\end{equation}
Also, by the definition of $\Sigma_\nu$ and since $\varphi=(\varphi_{1},\varphi_{2}) \in \Sigma_{\nu}$, we have
\[ \sigma_{s}(0) = \bigg( \int_{\mathbb{R}^{N}} (z_{\mu,s}^{\lambda_{1}})^{2_{s}^{*}}\,\mathrm{d}x,0\bigg)~~~~\text{and}~~~\sigma_{s}(1) = \bigg(0, \int_{\mathbb{R}^{N}} (z_{\mu,s}^{\lambda_{2}})^{2_{s}^{*}}\, \mathrm{d}x\bigg).\]
Thus, by the continuity of $\sigma_{s}$, there is a $t_0 \in (0,1)$ such that $\sigma_{1,s}(t_0) = \Tilde{\sigma}_{s} = \sigma_{2,s}(t_0)$. Combining (\ref{four one four}) and (\ref{four one five}), and taking $t=t_0$, we deduce the following
\[ (S(\lambda_{1})+ S(\lambda_{2}))\Tilde{\sigma}_{s}^{\frac{N-2s}{N}} \leq 2\Tilde{\sigma}_{s} +C\nu (\alpha + \beta) \Tilde{\sigma}_{s}^{\frac{\alpha + \beta}{2}\frac{N-2s}{N}}.\]
Now by using Lemma \ref{Abdelloui fractional version}, there exists a $\nu_0 = \nu_0(\epsilon)$ such that
\begin{equation} \label{four one six}
    \Tilde{\sigma}_{s} \geq (1-\epsilon) \bigg( \frac{S(\lambda_{1})+S(\lambda_{2})}{2}\bigg)^{\frac{N}{2s}}~~~\text{for~every}~0<\nu\leq \nu_0.
\end{equation}
Consequently, we combine (\ref{four one three}) and (\ref{four one six}) to get 
\[\max\limits_{t \in [0,1]} J_{\nu}^{+}(\varphi(t)) \geq \frac{s}{N}(\sigma_{1,s}(t_0)+\sigma_{2,s}(t_0)) \geq \frac{2s}{N}(1-\epsilon) \bigg( \frac{S(\lambda_{1})+S(\lambda_{2})}{2}\bigg)^{\frac{N}{2s}},\]
which proves claim (\ref{four elevan}). Moreover, by (\ref{four ten}) and (\ref{four elevan}), one can state that
\begin{equation} \label{four one seven}
    \mathcal{C}_{\mathcal{MP}} > \frac{s(1+\epsilon)}{N}S^{\frac{N}{2s}}(\lambda_{1}) = (1+\epsilon) J_{\nu}^{+}(z_{\mu,s}^{\lambda_{1}},0).
\end{equation}
Thus, the functional $J_{\nu}^{+}$ admits a Mountain-Pass-geometry on $\mathcal{N}_{\nu}$.

Now we show that the Palais-Smale compactness condition is satisfied at the Mountain pass level $\mathcal{C}_{\mathcal{MP}}$. We consider $\varphi(t) = (\varphi_{1}(t),\varphi_{2}(t)) = \big((1-t)^{1/2}z_{\mu,s}^{\lambda_{1}}, t^{1/2}z_{\mu,s}^{\lambda_{2}}\big)$ for $t \in [0,1]$. By the definition of the Nehari manifold, there exists a continuous positive function $\eta: [0,1] \rightarrow (0, +\infty
)$ such that the $\eta\varphi \in \mathcal{N}_{\nu}^{} \cap \mathcal{N}_{\nu}^{+}$ for $t \in [0,1]$. We notice that $\eta(0) = \eta(1) =1$.\

Now we define
\[ \sigma_{s}(t) = (\sigma_{1,s}(t),\sigma_{2,s}(t)) = \bigg(\int_{\mathbb{R}^{N}}(\eta\varphi_{1}^{}(t))^{2_{s}^{*}}\,\mathrm{d}x,~ \int_{\mathbb{R}^{N}}(\eta\varphi_{2}^{}(t))^{2_{s}^{*}}\,\mathrm{d}x \bigg).\]
Then, we have
\begin{equation}\label{four one eight}
    \sigma_{1,s}(0) = \int_{\mathbb{R}^{N}} (z_{\mu,s}^{\lambda_{1}})^{2_{s}^{*}}\,\mathrm{d}x = S^{\frac{N}{2s}}(\lambda_{1})~~~\text{and}~~~\sigma_{2,s}(1) = \int_{\mathbb{R}^{N}} (z_{\mu,s}^{\lambda_{2}})^{2_{s}^{*}}\,\mathrm{d}x = S^{\frac{N}{2s}}(\lambda_{2}).
\end{equation}
Since $\eta\varphi(t) \in \mathcal{N}_{\nu}^{+} \cap \mathcal{N}_{\nu}^{}$, by using the algebraic equation (\ref{Algebraic equation}), we obtain
\begin{align*}
    \|\big((1-t)^{1/2}z_{\mu,s}^{\lambda_{1}}, t^{1/2}z_{\mu,s}^{\lambda_{2}}\big)\|_{\mathbb{D}}^{2} = \eta^{2_{s}^{*} -2}(t) \big((1-t)^{2_{s}^{*}/2}\sigma_{1,s}(0) + t^{2_{s}^{*}/2}\sigma_{2,s}(1)\big) \hspace{3cm}\\
    + \nu(\alpha+\beta) (\eta(t))^{\alpha+\beta -2} (1-t)^{\alpha /2}t^{\beta/2} \int_{\mathbb{R}^{N}}h(x)(z_{\mu,s}^{\lambda_{1}})^{\alpha} (z_{\mu,s}^{\lambda_{2}})^{\beta}\,\mathrm{d}x,
\end{align*}
and therefore,
\begin{equation}\label{four one nine}
    \eta^{2_{s}^{*} -2}(t) < \frac{\|(\varphi_{1}(t), \varphi_{2}(t))\|_{\mathbb{D}}^{2}}{ \int_{\mathbb{R}^{N}} \big( (\varphi_{1}^{}(t))^{2_{s}^{*}} + (\varphi_{2}^{}(t))^{2_{s}^{*}}\big) \,\mathrm{d}x} = \frac{(1-t)\sigma_{1,s}(0) + t\sigma_{2,s}(1)}{(1-t)^{2_{s}^{*}/2}\sigma_{1,s}(0) + t^{2_{s}^{*}/2}\sigma_{2,s}(1)}
\end{equation}
for every $t \in (0,1)$. It is followed by the definition of $\eta$, (\ref{two one four}) and (\ref{four one nine}) that
\begin{align} \label{four two zero}
    \begin{split}
        J_{\nu}^{+}(\eta\varphi(t)) = \bigg(\frac{1}{2}- \frac{1}{\alpha+\beta} \bigg)\|\eta\varphi(t)\|_{\mathbb{D}}^{2} \hspace{6cm}\\
        + \bigg( \frac{1}{\alpha+\beta}-\frac{1}{2_{s}^{*}} \bigg) \eta^{2_{s}^{*}}(t) \bigg(\int_{\mathbb{R}^{N}} \big( (\varphi_{1}^{}(t))^{2_{s}^{*}} + (\varphi_{2}^{}(t))^{2_{s}^{*}}\big)\, \mathrm{d}x \bigg)\\
        = \eta^{2_{}^{}}(t) \bigg(\frac{1}{2}- \frac{1}{\alpha+\beta} \bigg) [(1-t)\sigma_{1,s}(0) + t\sigma_{2,s}(1)] \hspace{2.5cm}\\
        + \bigg( \frac{1}{\alpha+\beta}-\frac{1}{2_{s}^{*}} \bigg) \eta^{2_{s}^{*}}(t) [(1-t)^{2_{s}^{*}/2}\sigma_{1,s}(0) + t^{2_{s}^{*}/2}\sigma_{2,s}(1)] \hspace{5mm}\\
        < \frac{s\eta^{2_{}^{}}(t)}{N} [(1-t)\sigma_{1,s}(0) + t\sigma_{2,s}(1)] \hspace{4.7cm}
    \end{split}
\end{align}
Then, by (\ref{four one nine}) and (\ref{four two zero}), and for every $t \in (0,1)$, we obtain that
\[ J_{\nu}^{+}(\eta\varphi(t)) < G(t) : = \frac{s}{N}[(1-t)\sigma_{1,s}(0) + t\sigma_{2,s}(1)] \bigg[\frac{(1-t)\sigma_{1,s}(0) + t\sigma_{2,s}(1)}{(1-t)^{2_{s}^{*}/2}\sigma_{1,s}(0) + t^{2_{s}^{*}/2}\sigma_{2,s}(1)} \bigg]^{\frac{N-2s}{2s}}.\]
Clearly, the function $G(t)$ is maximum at point $t = \frac{1}{2}$. Also, from (\ref{four one eight}), we have
\[ G \bigg(\frac{1}{2}\bigg) = \frac{s}{N}(\sigma_{1,s}(0)+ \sigma_{2,s}(1)) = \frac{s}{N} ( S^{\frac{N}{2s}}(\lambda_{1}) + S^{\frac{N}{2s}}(\lambda_{2})).\]
we conclude 
$$\mathcal{C}_{\mathcal{MP}} \leq \max\limits_{t \in [0,1]} J_{\nu}^{+}(\eta\varphi(t)) < \frac{s}{N} ( S^{\frac{N}{2s}}(\lambda_{1}) + S^{\frac{N}{2s}}(\lambda_{2})).$$
If $S^{\frac{N}{2s}}(\lambda_{1})>S^{\frac{N}{2s}}(\lambda_{2})$, using the separability condition (\ref{Separability condition implies}) and the inequality (\ref{four one seven}), it follows that
$$ \frac{s}{N} S^{\frac{N}{2s}}(\lambda_{2}) < \frac{s}{N} S^{\frac{N}{2s}}(\lambda_{1})<\mathcal{C}_{\mathcal{MP}}< \frac{s}{N} ( S^{\frac{N}{2s}}(\lambda_{1}) + S^{\frac{N}{2s}}(\lambda_{2}))< \frac{3s}{N} S^{\frac{N}{2s}}(\lambda_{2}).$$
From the above expression, it is clear that the Mountain pass level $\mathcal{C}_{\mathcal{MP}}$ satisfies the assumptions of Lemma \ref{PS compactness lemma second} and Lemma \ref{critical with h radial}. Therefore, by the Mountain-Pass theorem, the functional $J_{\nu}^{+}|_{\mathcal{N}_{\nu}^{+}}$ admits a Palais-Smale sequence $\{(u_n,v_n)\} \subset \mathcal{N}_{\nu}^{+}$ at level $\mathcal{C}_{\mathcal{MP}}$.\

For the subcritical case $\alpha+\beta< 2_{s}^{*}$, from analogous versions of Lemmas \ref{equivalent of critical points lemma} and Lemma \ref{PS compactness lemma second} for the functional $J_{\nu}^{+}$, we imply that the sequence $\{(u_{n},v_{n})\}$ has a subsequence which strongly converges to a critical point $(\Tilde{u},\Tilde{v})$ of $J_{\nu}^{+}$ on $\mathcal{N}_{\nu}^{+}$. Therefore, it is also a critical point of $J_{\nu}^{+}$ defined in $\mathbb{D}$. Further, we have $\Tilde{u},\Tilde{v}\geq 0$ in $\mathbb{R}^{N}$ and $\Tilde{u},\Tilde{v} \neq (0,0)$. Indeed, we can conclude by the maximum principle of Pezzo and Quaas  \cite[Theorem 1.2]{Pezzo2017} that $\Tilde{u}_{} > 0$ and $\Tilde{v}_{} > 0$ in $\mathbb{R}^{N} \backslash \{0\}$. Hence, $(\Tilde{u},\Tilde{v})$ is a bound state solution to the system \eqref{main problem}.
To deal with the critical case, i.e., $\alpha+\beta = 2_{s}^{*}$, we follow the same approach for the compactness of the Palais-Smale sequence using Lemma \ref{critical with h radial}.\

(ii) Similarly, this part can be proved using Lemma \ref{lemma three six} and Lemma \ref{critical with h radial}.
\end{proof}
\section*{Acknowledgments}
RK wants to thank the support of the CSIR fellowship, file no. 09/1125(0016)/2020--EMR--I for his Ph.D. work. AS was supported by the DST-INSPIRE Grant DST/INSPIRE/04/2018/002208. T. Mukherjee acknowledges the support of the Start up Research Grant from DST-SERB, sanction no. SRG/2022/000524
\bibliography{Ref}

\begin{thebibliography}{10}

\bibitem{Abdellaoui2009}
B.~Abdellaoui, V.~Felli, and I.~Peral.
\newblock Some remarks on systems of elliptic equations doubly critical in the
  whole {$\mathbb{R}^N$}.
\newblock {\em Calc. Var. Partial Differential Equations}, 34(1):97--137, 2009.

\bibitem{Baldelli2021}
L.~Baldelli, Y.~Brizi, and R.~Filippucci.
\newblock Multiplicity results for {$(p,q)$}-{L}aplacian equations with
  critical exponent in {$\mathbb{R}^N$} and negative energy.
\newblock {\em Calc. Var. Partial Differential Equations}, 60(1):Paper No. 8,
  30, 2021.

\bibitem{Bhakta2021}
M.~Bhakta, S.~Chakraborty, and P.~Pucci.
\newblock Fractional {H}ardy-{S}obolev equations with nonhomogeneous terms.
\newblock {\em Adv. Nonlinear Anal.}, 10(1):1086--1116, 2021.

\bibitem{Bonder2018}
J.~F. Bonder, N.~Saintier, and A.~Silva.
\newblock The concentration-compactness principle for fractional order
  {S}obolev spaces in unbounded domains and applications to the generalized
  fractional {B}r\'ezis-{N}irenberg problem.
\newblock {\em NoDEA Nonlinear Differential Equations Appl.}, 25(6):Paper No.
  52, 25, 2018.

\bibitem{Brasco2021characterisation}
L.~Brasco, D.~G\'{o}mez-Castro, and J.~L. V\'{a}zquez.
\newblock Characterisation of homogeneous fractional {S}obolev spaces.
\newblock {\em Calc. Var. Partial Differential Equations}, 60(2):Paper No. 60,
  40, 2021.

\bibitem{Brasco2019note}
L.~Brasco and A.~Salort.
\newblock A note on homogeneous {S}obolev spaces of fractional order.
\newblock {\em Ann. Mat. Pura Appl. (4)}, 198(4):1295--1330, 2019.

\bibitem{Bucur2016nonlocal}
C.~Bucur and E.~Valdinoci.
\newblock {\em Nonlocal diffusion and applications}, volume~20 of {\em Lecture
  Notes of the Unione Matematica Italiana}.
\newblock Springer, [Cham]; Unione Matematica Italiana, Bologna, 2016.

\bibitem{Chen2018}
W.~Chen, S.~Mosconi, and M.~Squassina.
\newblock Nonlocal problems with critical {H}ardy nonlinearity.
\newblock {\em J. Funct. Anal.}, 275(11):3065--3114, 2018.

\bibitem{Chen2015classical}
Z.~Chen and W.~Zou.
\newblock Existence and symmetry of positive ground states for a doubly
  critical {S}chr\"{o}dinger system.
\newblock {\em Trans. Amer. Math. Soc.}, 367(5):3599--3646, 2015.

\bibitem{Colorado2021bound}
E.~Colorado, R.~L{\'o}pez-Soriano, and A.~Ortega.
\newblock Bound and ground states of coupled ``{NLS-KDV}" equations with
  {H}ardy potential and critical power.
\newblock {\em arXiv preprint arXiv:2107.13899}, 2021.

\bibitem{Colorado2022}
E.~Colorado, R.~L\'{o}pez-Soriano, and A.~Ortega.
\newblock Existence of bound and ground states for an elliptic system with
  double criticality.
\newblock {\em Nonlinear Anal.}, 216:Paper No. 112730, 26, 2022.

\bibitem{Pezzo2017}
L.~M. Del~Pezzo and A.~Quaas.
\newblock A {H}opf's lemma and a strong minimum principle for the fractional
  {$p$}-{L}aplacian.
\newblock {\em J. Differential Equations}, 263(1):765--778, 2017.

\bibitem{Dipierro2017book}
S.~Dipierro, M.~Medina, and E.~Valdinoci.
\newblock {\em Fractional elliptic problems with critical growth in the whole
  of {$\mathbb{R}^n$}}, volume~15 of {\em Appunti. Scuola Normale Superiore di
  Pisa (Nuova Serie) [Lecture Notes. Scuola Normale Superiore di Pisa (New
  Series)]}.
\newblock Edizioni della Normale, Pisa, 2017.

\bibitem{Dipierro2016}
S.~Dipierro, L.~Montoro, I.~Peral, and B.~Sciunzi.
\newblock Qualitative properties of positive solutions to nonlocal critical
  problems involving the {H}ardy-{L}eray potential.
\newblock {\em Calc. Var. Partial Differential Equations}, 55(4):Art. 99, 29,
  2016.

\bibitem{Frank2008}
R.~L. Frank and R.~Seiringer.
\newblock Non-linear ground state representations and sharp {H}ardy
  inequalities.
\newblock {\em J. Funct. Anal.}, 255(12):3407--3430, 2008.

\bibitem{He2020}
Q.~He and Y.~Peng.
\newblock Infinitely many solutions with peaks for a fractional system in
  {$\mathbb{R}^{N}$}.
\newblock {\em Acta Math. Sci. Ser. B (Engl. Ed.)}, 40(2):389--411, 2020.

\bibitem{Kang2014}
D.~Kang.
\newblock Systems of elliptic equations involving multiple critical
  nonlinearities and different {H}ardy-type terms in {$\mathbb R^N$}.
\newblock {\em J. Math. Anal. Appl.}, 420(2):917--929, 2014.

\bibitem{Lieb2002sharp}
E.~H. Lieb.
\newblock Sharp constants in the {H}ardy-{L}ittlewood-{S}obolev and related
  inequalities.
\newblock {\em Ann. of Math. (2)}, 118(2):349--374, 1983.

\bibitem{lions1985Partone}
P.-L. Lions.
\newblock The concentration-compactness principle in the calculus of
  variations. {T}he limit case. {I}.
\newblock {\em Rev. Mat. Iberoamericana}, 1(1):145--201, 1985.

\bibitem{lions1985Parttwo}
P.-L. Lions.
\newblock The concentration-compactness principle in the calculus of
  variations. {T}he limit case. {II}.
\newblock {\em Rev. Mat. Iberoamericana}, 1(2):45--121, 1985.

\bibitem{Bisci2016variational}
G.~Molica~Bisci, V.~D. Radulescu, and R.~Servadei.
\newblock {\em Variational methods for nonlocal fractional problems}, volume
  162 of {\em Encyclopedia of Mathematics and its Applications}.
\newblock Cambridge University Press, Cambridge, 2016.
\newblock With a foreword by Jean Mawhin.

\bibitem{Pisante2015}
G.~Palatucci and A.~Pisante.
\newblock A global compactness type result for {P}alais-{S}male sequences in
  fractional {S}obolev spaces.
\newblock {\em Nonlinear Anal.}, 117:1--7, 2015.

\bibitem{Pucci2021}
P.~Pucci and L.~Temperini.
\newblock Existence for fractional {$(p,q)$} systems with critical and {H}ardy
  terms in {$\mathbb{R}^N $}.
\newblock {\em Nonlinear Anal.}, 211:Paper No. 112477, 33, 2021.

\bibitem{Shen2022brezis}
Y.~Shen.
\newblock The {B}rezis-{N}irenberg problem for fractional systems with {H}ardy
  potentials.
\newblock {\em Math. Methods Appl. Sci.}, 45(3):1341--1358, 2022.

\bibitem{Terracini1996}
S.~Terracini.
\newblock On positive entire solutions to a class of equations with a singular
  coefficient and critical exponent.
\newblock {\em Adv. Differential Equations}, 1(2):241--264, 1996.

\bibitem{Xiang2016}
M.~Xiang, B.~Zhang, and X.~Zhang.
\newblock A nonhomogeneous fractional {$p$}-{K}irchhoff type problem involving
  critical exponent in {$\mathbb{R}^N$}.
\newblock {\em Adv. Nonlinear Stud.}, 17(3):611--640, 2017.

\bibitem{Zhong2015critical}
X.~Zhong and W.~Zou.
\newblock Critical {S}chr\"{o}dinger systems in {$\mathbb{R}^N$} with
  indefinite weight and {H}ardy potential.
\newblock {\em Differential Integral Equations}, 28(1-2):119--154, 2015.

\end{thebibliography}
\bibliographystyle{abbrv}
\begin{enumerate}
    \item[E-mail:] kumar.174@iitj.ac.in
    \item[E-mail:] tuhina@iitj.ac.in
    \item[E-mail:] abhisheks@iitj.ac.in
\end{enumerate}
\end{document}